\newcommand{\BA}{{\mathbb {A}}}
\newcommand{\BC}{{\mathbb {C}}}
\newcommand{\BG}{{\mathbb {G}}}
\newcommand{\BN}{{\mathbb {N}}}
\newcommand{\BP}{{\mathbb {P}}}
\newcommand{\BQ}{{\mathbb {Q}}}
\newcommand{\BR}{{\mathbb {R}}}
\newcommand{\BZ}{{\mathbb {Z}}}
\newcommand{\CL}{{\mathcal {L}}}
\newcommand{\CM}{{\mathcal {M}}}
\newcommand{\CO}{{\mathcal {O}}}
\newcommand{\CX}{{\mathcal {X}}}
\newcommand{\CY}{{\mathcal {Y}}}
\newcommand{\RN}{{\mathrm {N}}}
\newcommand{\an}{{\mathrm{an}}}
\newcommand{\intb}{{\mathrm{int}}}
\newcommand{\Div}{{\mathrm{Div}}}
\renewcommand{\div}{{\mathrm{div}}}
\newcommand{\Gal}{{\mathrm{Gal}}}
\newcommand{\NS}{{\mathrm{NS}}}
\newcommand{\Pic}{\mathrm{Pic}}
\newcommand{\Prep}{\mathrm{Prep}}
\DeclareMathOperator{\Spec}{Spec}
\newcommand{\tor}{{\mathrm{tor}}}
\newcommand{\wh}{\widehat}
\newcommand{\pp}{\frac{\partial\bar\partial}{\pi i}}
\newcommand{\pair}[1]{\langle {#1} \rangle}
\newcommand{\ds}{\displaystyle}
\newcommand{\lra}{\longrightarrow}
\newcommand{\amp}{\mathrm{amp}}
\newcommand{\nef}{\mathrm{nef}}
\renewcommand{\vert}{\mathrm{vert}}
\newcommand{\CLL}{\overline{\mathcal L}}
\newcommand{\CMM}{\overline{\mathcal M}}
\newcommand{\CNN}{\overline{\mathcal N}}
\newcommand{\CC}{\mathbb{C}}
\newcommand{\RR}{\mathbb{R}}
\newcommand{\ZZ}{\mathbb{Z}}
\newcommand{\QQ}{\mathbb{Q}}
\newcommand{\BFF}{\mathbf{F}}
\newcommand{\DS}{\mathcal{DS}}
\newtheorem{thm}{Theorem}[section]
\newtheorem{cor}[thm]{Corollary}
\newtheorem{lem}[thm]{Lemma}
\newtheorem{prop}[thm]{Proposition}
\newtheorem{defn}[thm]{Definition}
\theoremstyle{definition}
\newtheorem{definition}[thm]{Definition}
\newtheorem{example}[thm]{Example}
\newtheorem{question}[thm]{Question}
\theoremstyle{remark}
\newtheorem{remark}[thm]{Remark}
\begin{document}

\title{The arithmetic Hodge index theorem for adelic line bundles I: number fields}
\author{Xinyi Yuan and Shou-Wu Zhang}
\maketitle

\tableofcontents

\section{Introduction}

The  Hodge index theorem for  divisors on arithmetic surfaces  proved by  Faltings  \cite{Fal}  in 1984 and Hirijac \cite{Hr} in 1985 is 
one of the  fundamental results 
in Arakelov theory. For example, it was used to prove the the first case of Bogomolov's conjecture for curves in tori by Zhang 
\cite{Zh1}. 
In 1996, Moriwaki  \cite{Mo1} extended  the Hodge index theorem for codimension one cycles on high-dimensional arithmetic 
varieties,  and then confirmed  the codimension one case of the  arithmetic  standard conjecture proposed by
 Gillet and Soul\'e  in \cite{GS3}. Despite its fundamental importance in number theory and  arithmetic geometry, e.g, in 
 Gross--Zagier type formula for cycles on Shimura varieties,  the 
 high-codimensional case of the Gillet--Soul\'e conjecture is still wide open. 
 
The  aim of this series of two papers is to prove an {\em adelic version} of the Hodge index theorem for (still) codimension one cycles on 
 varieties over a finitely generated field $K$. Namely, these line bundles are equipped with metrics as limits of integral models of the structure morphisms. These matrices  naturally appear  in  algebraic dynamical systems and moduli
 spaces  
 of varieties.  Here we will give  two applications:

\begin{enumerate}[(1)]
\item the uniqueness part of the
Calabi--Yau theorem for metrized line bundles over non-archimedean analytic spaces,
\item a rigidity result of the sets of preperiodic points of polarizable endomorphisms of a projective variety over any field $K$. 
\end{enumerate}

The proof of our results uses Arakelov theory (cf. \cite{Ar, GS1}) and Berkovich analytic spaces (cf. \cite{Be}). In comparison with Moriwaki's proof, one essential difficulty in adelic case is the lack of relative ampleness 
in projective systems of  integral models. 
 Our new ideas for this are  a new notion of $\bar L$-boundedness 
 and a variation method (inspired by Blocki's work \cite{Bl} in complex geometry).

In this paper, the first one of the series, we prove our Hodge index theorem and 
the application in (2) assuming $K$ is a number field, and prove the application in (1) in the full generality. 
In \cite{YZ}, the second one of the series, we will treat our Hodge index theorem and the application in (2) in the full generality after introducing a theory of adelic line bundles on varieties over finitely generated fields. We separate the exposition into two papers due to the technicality of our theory over finitely generated fields. 
In the following, we state the main results of this paper.

\subsection{Arithmetic Hodge index theorem}

Let us first recall the arithmetic Hodge index theorem for Hermitian line bundles on arithmetic varieties. 
\begin{thm}[\cite{Fal, Hr, Mo1}] \label{hodge0}
Let $K$ be a number field, and $\pi:\CX\to \Spec O_K$ be a regular arithmetic variety, geometrically connected of relative dimension $n\geq 1$. 
Let $\overline \CM$ be a Hermitian line bundle on $\CX$, and $\overline \CL$ be an ample Hermitian line bundle on $\CX$. Assume that $\CM_K\cdot \CL_{K}^{n-1}=0$ on the generic fiber $\CX_K$. Then the arithmetic intersection number
$$
\overline \CM^2\cdot \overline \CL^{n-1}\leq 0.
$$ 

Moreover, if $\CL$ is ample on $\CX$ and the metric of $\CLL$ is strictly positive, then the equality holds if and only if $\overline \CM=\pi^*\overline \CM_0$ for some Hermitian line bundle $\overline \CM_0$ on $\Spec O_K$. 
\end{thm}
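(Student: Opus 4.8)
The plan is to reduce the positivity (inequality) part of Theorem~\ref{hodge0} to the classical Hodge index theorem on a single arithmetic surface by a pencil/Bertini argument, and then to analyze the equality case via a limiting/variational argument on the metric.

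\medskip

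\emph{Step 1: Reduction to surfaces for the inequality.} Given $\overline\CM$ and an ample $\overline\CL$ with $\CM_K\cdot\CL_K^{n-1}=0$, I would first reduce to the case $n=1$, i.e. to an arithmetic surface. The idea is to replace $\overline\CL$ by a large multiple so that it is very ample, and to intersect $\CX$ with $n-1$ general hyperplane sections in the linear system $|\CL^{\otimes e}|$. By a suitable arithmetic Bertini theorem (or by a direct moving-lemma argument) one can choose these sections so that the resulting cycle is (an integral model of) an irreducible arithmetic surface $\CX'\to\Spec O_K$ with its ample Hermitian line bundle $\overline\CL|_{\CX'}$ and Hermitian line bundle $\overline\CM|_{\CX'}$, and so that $\overline\CM^2\cdot\overline\CL^{n-1}$ equals (a positive multiple of) $(\overline\CM|_{\CX'})^2\cdot(\overline\CL|_{\CX'})$ up to an error that can be controlled — or, more cleanly, by working with averaged/generic members one gets the exact reduction. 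On $\CX'$ the hypothesis becomes $\CM_{\CX'}\cdot\text{(fiber)}$-type orthogonality, i.e. $\CM$ has degree $0$ on the generic fiber of $\CX'$, which is precisely the hypothesis of the Faltings--Hironaka Hodge index theorem on arithmetic surfaces, giving $(\overline\CM|_{\CX'})^2\le 0$.

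\medskip

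\emph{Step 2: The arithmetic-surface case.} Here I would invoke (and, if desired, recall the proof of) the Faltings--Hironaka theorem: for a Hermitian line bundle $\overline N$ on a regular arithmetic surface with $\deg(N|_{\text{generic fiber}})=0$, one has $\overline N^2\le 0$. The proof decomposes $\overline N$ into a ``horizontal'' part, handled by Arakelov's analytic Hodge index theorem (negative semidefiniteness of the Arakelov intersection pairing on degree-zero divisors, essentially the Cauchy--Schwarz/Green's function positivity), and a ``vertical'' part, handled by the fact that the intersection form on components of a fixed special fiber is negative semidefinite with kernel spanned by the whole fiber. This gives the inequality; I would not reprove it but cite \cite{Fal, Hr}.

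\medskip

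\emph{Step 3: The equality case.} This is where the hypotheses strengthen to ``$\CL$ ample and the metric strictly positive'', and this is the main obstacle. The goal is: if $\overline\CM^2\cdot\overline\CL^{n-1}=0$ then $\overline\CM=\pi^*\overline\CM_0$. The strategy I would follow is a variational one. First, the archimedean component: strict positivity of the metric on $\CLL$ forces, via the equality case of the analytic Hodge index theorem (i.e. the equality case of Cauchy--Schwarz for the $\partial\bar\partial$-pairing against the Kähler form $c_1(\overline\CL)^{n-1}$), that the curvature form $c_1(\overline\CM)$ is harmonic, hence — since $\CM_K\cdot\CL_K^{n-1}=0$ and we may assume $\CX_{\CC}$ connected — that $c_1(\overline\CM)=0$ as a form, so $\overline\CM_{\CC}$ is flat. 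Second, the finite places: ampleness of $\CL$ on $\CX$ (not just $\CX_K$) is used to run the same argument with integral models, forcing the ``$\CM$ is numerically vertical-trivial'' conclusion at each finite place; concretely, one shows that for every vertical prime divisor the local intersection contribution vanishes, and combined with the negative-definiteness modulo fibers this forces $\CM$ to be, up to torsion and up to pullback of something from the base, trivial on each fiber. Assembling: $\CM$ is numerically trivial relative to $\pi$ and flat at infinity; by rigidity (the Picard variety of $\CX$ over $\Spec O_K$, together with the Lefschetz-type fact that $\Pic(\CX)\to\Pic(\CX_K)$ has finite-rank-controlled kernel) one deduces $\CM\cong\pi^*\CM_0\otimes(\text{torsion})$, and then matching the metrics (flatness at $\infty$) upgrades this to the metrized statement $\overline\CM=\pi^*\overline\CM_0$. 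The delicate point — and the reason I expect Step 3 to be the crux — is propagating the equality from the averaged surface statement back to the full variety $\CX$: a single generic surface only sees part of $\CM$, so one must either vary the pencil and use that the conclusion ``$c_1(\overline\CM)=0$'' is detected on enough surfaces to force it on all of $\CX_{\CC}$, or run the argument directly on $\CX$ using the positivity of $c_1(\overline\CL)^{n-1}$ as a strictly positive $(n-1,n-1)$-form against which $c_1(\overline\CM)\wedge c_1(\overline\CM)$ integrates to something $\le 0$ with equality iff $c_1(\overline\CM)=0$. I would adopt the latter, more direct route at the archimedean place, and handle the non-archimedean places by the analogous integral-model positivity, which is exactly where Moriwaki's use of relative ampleness enters and which the paper's new notion of $\bar L$-boundedness is presumably designed to replace in the adelic generalization.
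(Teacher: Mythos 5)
There is a genuine gap in Step 1, and it is the crux of Moriwaki's proof. When you cut $\CX$ by a section $s$ of $\CL$ you do \emph{not} simply restrict to the divisor: the arithmetic intersection formula produces
$$
\overline\CM^2\cdot\overline\CL^{n-1}
= \overline\CM^2\cdot\overline\CL^{n-2}\cdot\div(s)
\;-\;\int_{\CX(\CC)}\log\|s\|\, c_1(\overline\CM)^2\, c_1(\overline\CL)^{n-2},
$$
and the $(n,n)$-form $c_1(\overline\CM)^2\,c_1(\overline\CL)^{n-2}$ has, a priori, no sign, because $c_1(\overline\CM)$ is an arbitrary real $(1,1)$-form. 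Averaging over the linear system or ``choosing generic members'' does nothing to fix this; the error term is a genuine obstruction, not a perturbation. The proof (as in \cite{Mo1}, and as sketched in \S2.4 of this paper) therefore begins by \emph{renormalizing the metric on $\CM$} by solving a Monge--Amp\`ere--type elliptic equation: one replaces $\CMM$ by $\CMM'=(\CM,\|\cdot\|_0)$ with $c_1(\CMM')\,c_1(\CLL)^{n-1}=0$ pointwise (this uses $\CM_K\cdot\CL_K^{n-1}=0$ and, at infinity, the strict positivity of the metric on $\CLL$). Integration by parts then shows $\CMM^2\cdot\CLL^{n-1}\leq \CMM'^2\cdot\CLL^{n-1}$, and \emph{Aleksandrov's inequality for mixed discriminants} (applied to the normalized $\CMM'$) gives the pointwise bound $c_1(\CMM')^2\,c_1(\CLL)^{n-2}\leq 0$, which is exactly what makes the archimedean term in the slicing non-positive once $\|s\|_{\sup}<1$ from arithmetic Bertini. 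Without the normalization and Aleksandrov step, your reduction to arithmetic surfaces produces archimedean contributions of unknown sign. (A smaller point: the reduction is best run as an induction cutting \emph{one} hyperplane at a time, since you must re-apply the geometric Hodge index theorem on the good special fibers and the induction hypothesis on the horizontal part of $\div(s)$ at each step.)

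For Step 3, note that this paper does not reprove the equality case of Theorem~\ref{hodge0}; it cites \cite{Mo1}. Your argument ``harmonic $\Rightarrow$ zero'' needs $\CM_K$ to be numerically trivial on $\CX_K$ before it applies; you get this from the equality $\CM_K^2\cdot\CL_K^{n-2}=0$ and the geometric Hodge index theorem, but you should say so. More importantly, the conclusion $\overline\CM=\pi^*\overline\CM_0$ requires showing $\CM$ itself (not just some power) descends, and the analysis at the finite places must use the negative-definiteness of the intersection form on the vertical fiber components modulo the full fiber, not just ``vanishing of local contributions''; this is where the regularity of $\CX$ and the ampleness of $\CL$ on $\CX$ (not only $\CX_K$) enter. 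As you observe, these features of the integral-model argument are precisely what the paper's notion of $\overline L$-boundedness and the variational lemma are designed to circumvent in the adelic setting, so your diagnosis of why they are introduced is sound even though the proof sketch is incomplete.
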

The result was due to Faltings \cite{Fal} and Hriljac \cite{Hr} for $n=1$, and due to Moriwaki \cite{Mo1} for general $n$. 

The main result of this paper is a version of the above result for adelic line bundles. The importance of the Hodge index theorem in the adelic setting will be justified by our applications to the Calabi--Yau theorem and to algebraic dynamics. 

To state our theorem, we start with some positivity notions.
We refer to Zhang \cite{Zh3} for basic definitions of adelic line bundles, and to Definition \ref{def-positivity2} for positivity of Hermitian line bundles on arithmetic varieties. 

\begin{defn} \label{def-positivity} 
Let $K$ be a number field. Let $X$ be a projective variety over $K$, and
$\overline L, \overline M$ be adelic line bundles on $X$.
We make the following definitions. 
\begin{enumerate} [(1)]
\item We say that $\overline L$ is \emph{nef} if the adelic metric of $L$ is a uniform limit of metrics induced by nef Hermitian line bundles on integral models of $X$. 
\item We say that $\overline L$ is \emph{integrable} if it is the difference of two nef adelic line bundles on $X$.  
\item We say that $\overline L$ is \emph{ample} if
$L$ is ample, $\overline L$ is nef, and $(\overline L|_Y)^{\dim Y+1}> 0$ for any closed subvariety $Y$ of $X$.
\item We say that $\overline M$ is \emph{$\overline L$-bounded} if there is an integer $m >0$ such that both $m\overline L+ \overline M$ and $m\overline L- \overline M$ are nef. 
\end{enumerate}
\end{defn}

 \medskip
 
The following is the main theorem of this paper.

\begin{thm}\label{hodge} 
Let $K$ be a number field, and $\pi:X\to \Spec K$ be a normal and geometrically connected projective variety of dimension $n\geq 1$. 
Let $\overline M$ be an integrable adelic line bundle on $X$, and $\overline L_1, \cdots, \overline L_{n-1}$ be $n-1$ nef line bundles on $X$ where each $L_i$ is big on $X$. 
Assume $M\cdot L_1\cdots L_{n-1}=0$ on $X$. 
Then 
$$\overline M^2\cdot \overline L_1\cdots \overline L_{n-1} \le 0.$$

Moreover, $\overline L_i$ is ample and $\overline M$ is $\overline L_i$-bounded for each $i$, then the equality holds if and only if  $r\overline M=\pi^*\overline M_0$ for some adelic line bundle $\overline M_0$ on $\Spec K$ and some integer $r>0$.
\end{thm}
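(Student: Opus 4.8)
The plan is to reduce the adelic statement to the classical Hodge index theorem (Theorem \ref{hodge0}) by an approximation-and-limiting argument, exploiting the definition of nef adelic line bundles as uniform limits of nef Hermitian line bundles on integral models. First I would treat the inequality $\overline M^2\cdot \overline L_1\cdots \overline L_{n-1}\le 0$. By linearity and a density argument, one may reduce to the case where $M$, and each $L_i$, come from integral models over $\Spec O_K$; the subtlety is that the condition $M\cdot L_1\cdots L_{n-1}=0$ on the generic fiber $X$ need not be preserved under approximation, so one must absorb the discrepancy into a vertical (fibral) correction term and control it. Concretely, after passing to a suitable blow-up one can write $\overline M = \overline M' + \sum a_i \pi^*\overline{\CO}_i$ where $\overline M'$ satisfies the generic-fiber orthogonality exactly, and then Theorem \ref{hodge0} (in Moriwaki's form, but applied after replacing the ample $\overline L$ by products of distinct $\overline L_i$'s — here one needs the mixed version of the Hodge index inequality, provable by polarization from the single-bundle case) gives the inequality for $\overline M'$; the vertical terms contribute non-positively because the local intersection form on vertical cycles is negative semi-definite (the arithmetic analogue of the Zariski/Hodge index on fibers). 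Taking limits preserves the inequality since arithmetic intersection numbers are continuous in the adelic metrics.

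For the equality case, suppose $\overline M^2\cdot\overline L_1\cdots\overline L_{n-1}=0$ with each $\overline L_i$ ample and $\overline M$ being $\overline L_i$-bounded. The $\overline L_i$-boundedness is exactly the hypothesis that lets me run a \emph{variation argument}: for a real parameter $t$, consider $\overline M_t = \overline M + t(\text{small integrable perturbation})$ and study the function $t\mapsto \overline M_t^2\cdot\overline L_1\cdots\overline L_{n-1}$ after subtracting off the correction needed to restore generic-fiber orthogonality. The inequality just proved says this function is $\le 0$ with a maximum at $t=0$, so its derivative vanishes there; doing this for sufficiently many perturbations forces $\overline M$ (up to a multiple and up to $\pi^*$ of something on $\Spec K$) to be numerically trivial in a strong sense. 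To upgrade numerical triviality to the actual conclusion $r\overline M = \pi^*\overline M_0$, I would argue place by place: strict positivity of the $\overline L_i$ (encoded in ampleness) together with the vanishing of the self-intersection forces the curvature of $\overline M$ to vanish at every archimedean place and the model metrics to be ``flat'' (pulled back from the base) at every non-archimedean place, via the local Hodge index / semi-definiteness statement being an equality; then a standard argument (as in the last part of Theorem \ref{hodge0}) shows $\overline M$ itself is vertical, and being orthogonal to ample classes and flat everywhere it must be $\pi^*$ of a bundle on $\Spec K$ after multiplying by a positive integer $r$ to clear torsion.

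The main obstacle — and the point where the adelic setting genuinely differs from Moriwaki's — is the \textbf{lack of relative ampleness in projective systems of integral models}, flagged in the introduction. In the classical theorem one has a single model on which $\CL$ is ample and the metric strictly positive; here the nef adelic $\overline L_i$ is only a limit, and no single model is relatively ample, so one cannot directly invoke the rigidity part of Theorem \ref{hodge0}. This is where the notion of \emph{$\overline L$-boundedness} earns its keep: it provides the uniform two-sided control across the whole tower of models needed to make the variational argument converge and to extract a limiting flatness statement. I expect the technical heart of the proof to be establishing this convergence — showing that the equality case is stable under the limit and that the ``derivative vanishes'' conclusion can be extracted uniformly — rather than the algebraic manipulations, which are formal once the analytic inputs (continuity of intersection numbers, semi-definiteness of local forms, Blocki-style variation) are in place.
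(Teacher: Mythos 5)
Your overall architecture (reduce to models, run a variational argument, use $\overline L$-boundedness) resembles the paper's, and your sketch of the inequality is roughly in the right spirit: the paper does indeed add $\epsilon\overline A$ to each $\overline L_i$ and correct $\overline M$ by $\delta\overline A$ to preserve the generic-fiber orthogonality (using bigness of $L_i$ to control $\delta\to 0$), and then reduces to a model case proved in Moriwaki's style via Bertini plus Gromov's pointwise estimate. But there is no ``polarization from the single-bundle case''; the mixed-product inequality is proved directly by the same Bertini argument.

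For the equality case there is a genuine gap. Your variation is in $\overline M$: perturb $\overline M_t=\overline M+t\overline M'$ and use that the non-positive quadratic function has a maximum at $t=0$, so its derivative vanishes. That is just the Cauchy--Schwarz argument already built into the inequality (Proposition~\ref{hodge3}(2)) and it yields only $\overline M\cdot\overline M'\cdot\overline L_1\cdots\overline L_{n-1}=0$ for perturbations $\overline M'$. It does \emph{not} let you touch the $\overline L_i$, which is exactly where the difficulty lives. The paper's variational method (Lemma~\ref{variation}) varies the $\overline L_i$: given the equality for the original $\overline L_i$, it proves the equality persists for \emph{any} nef $\overline L_i^0$ with the same underlying line bundle. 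This is what $\overline L_i$-boundedness is actually used for --- it lets you replace $\overline L_i$ by $\overline L_i\pm\overline M$ and telescope. Once you have that freedom, you replace each $\overline L_i$ by a model metric induced by an ample $\CL_i$ with strictly positive curvature, push $\overline M$ forward to a model as $\overline M_\CX$ (Lemma~\ref{pushforward}), and invoke the model rigidity (Proposition~\ref{hodge2}(3)). Your proposed ``argue place by place: strict positivity of $\overline L_i$ forces the curvature of $\overline M$ to vanish'' does not work directly, because an ample \emph{adelic} $\overline L_i$ does not have a pointwise strictly positive curvature form --- its metric is only a uniform limit of model metrics, and the limiting Monge--Amp\`ere measure can be degenerate. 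That is precisely the ``lack of relative ampleness'' obstruction you flag, and your sketch names it without resolving it.

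You also omit a necessary intermediate step: in the general (non-vertical) equality case, one must first use Lemma~\ref{ample} and the geometric Hodge index theorem (Theorem~\ref{lefschetz2}) to deduce that $M$ is numerically trivial, and then apply a Bertini argument together with the Lefschetz theorem for normal varieties (Theorem~\ref{lefschetz1}) to cut down to a curve, where Faltings--Hriljac finishes. Without this reduction, ``upgrade numerical triviality to $r\overline M=\pi^*\overline M_0$'' is unjustified --- the torsion statement and the orthogonality to vertical classes come from the curve case, not from a formal argument.
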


If $M$ is numerically trivial, we can strengthen the theorem as follows. 
First, we can remove the condition ``$L_i$ is big'' in the inequality part of the theorem, by viewing $\overline L_i$ as the limit of $\overline L_i+\epsilon \overline A$ as $\epsilon\to 0$ for some ample $\overline A$.
Second, we can replace the condition ``$\overline L_i$ is ample'' by ``$L_i$ is ample'' in the equality part of the theorem, by changing the metric of $\overline L_i$ by constant multiples. 

As in the classical case, the theorem explains the signature of the intersection pairing on certain space of adelic line bundles. 
Let $W$ denote the subspace of $\wh \Pic(X)_{\intb}\otimes_\ZZ\QQ$ consisting of elements which are represented by $\QQ$-linear combinations of integrable adelic line bundles on $X$ which are $\overline L_i$-bounded for all $i$. 
Define a pairing on $W$ by
 $$\pair{\overline M_1, \overline M_2}=\overline M_1\cdot \overline M_2\cdot \overline L_1\cdots \overline L_{n-1}.$$
Denote $V=\pi^*\wh \Pic (K)\otimes_{\ZZ}\QQ$, viewed as a subspace of $W$. 
Then the theorem implies that the pairing on $V^\perp$ is negative semi-definite, that $V$ is a maximal isotropic subspace of $V^\perp$, and that  $V^\perp/V$ is negative definite.

The inequality part of Theorem \ref{hodge} can be eventually reduced to Theorem \ref{hodge0} by taking limits (in the case all $\overline L_i$ are equal). However, the equality part of the theorem is more profound and significantly more difficult, since it is not a simple limit of its counterpart for Hermitian line bundles on integral models. 
The following are some new ingredients of our treatment:
\begin{enumerate}[(1)]
\item We introduce the notion ``$\overline L$-bounded'' to overcome the difficulty that arithmetic ampleness of Hermitian line bundles is not preserved by pull-back by morphisms between integral models, and the difficulty that
arithmetic ampleness of adelic line bundles is not preserved by perturbations.
\item We use a variational method to show that, in the case that $M$ is numerically trivial, if $\overline M^2\cdot \overline L_1\cdots \overline L_{n-1} = 0$ holds, then it holds after replacing $\overline L_i$ by 
any nef adelic line bundle $\overline L_i^0$ with the same underlying line bundle $L_i$. It gives much flexibility. The variational method is inspired by the idea of Blocki \cite{Bl} on the complex Calabi--Yau theorem. 
\item We only assume that $X$ is normal and projective in order to cover all polarizable algebraic dynamical systems. To overcome the difficulty, we extend the classical Lefschetz theorems for line bundles to the normal case. 
\end{enumerate}

\begin{example}
We give some examples to show that the conditions of the theorem are necessary. 
\begin{enumerate}[(1)]
\item
The assumption ``$L_i$ is big'' is necessary for the inequality if $\overline M$ is not vertical. In fact, take $n=2$. Take $\overline L_1=\pi^* \overline N$ for any adelic line bundle 
$\overline N\in \wh\Pic(K)$
with $\wh\deg(\overline N)=1$. Note that $L_1=\CO_X$ is trivial and the constraint $M\cdot L_1=0$ is automatic. 
Then the inequality $\overline M^2\cdot\overline L_1\leq 0$ is just 
$M^2\leq 0$ for any line bundle $M$ on $X$. 
It is not true. 
\item 
The assumption ``$\overline M$ is $\overline L_i$-bounded for each $i$'' is necessary for the condition of the equality. 
In fact, take $n=2$. Take an integral model $\CX$ of $X$. 
Let $\CLL$ be an ample Hermitian line bundle on $\CX$. 
Let $\alpha:\CX'\to \CX$ be the blowing-up of a closed point on the $\CX$. 
Let $\CMM$ be the (vertical) line bundle on $\CX'$ associated to the exceptional divisor endowed with the trivial hermitian metric $\|1\|=1$. Then we  $\CM_K\cdot\CL_K=0$ and $\CMM^2\cdot \alpha^*\CLL=0$ by the projection formula. 
But $\CMM$ is not coming from any line bundle on the base $\Spec O_K$. 
The problem is that $\CMM$ is not $\CLL$-bounded if we convert the objects to the adelic setting. 
\item  
The assumption ``$\overline L_i$ is ample'' is necessary for the condition of the equality if $M$ is not numerically trivial. 
In fact, take $X$ to be an abelian variety of dimension $n$. 
The multiplication $f=[2]$ defines a polarizable algebraic dynamical system on $X$. Let $N_1,N_2$ be two symmetric and ample line bundles on $X$, which are not proportional.
They polarizes $f$ in the sense that $f^*N_i=4N_i$ for $i=1,2$.  
Let $\overline N_i$ be the $f$-invariant adelic line bundles extending $N_i$. 
Then we have $\overline N_1^j\cdot\overline N_2^{n+1-j}=0$ for any $j$ by $f^*\overline N_i=4\overline N_i$. 
It follows that 
$$
(\overline N_1-\overline N_2)^{2}\cdot (\overline N_1+\overline N_2)^{n-1}=0.
$$
We are close to the setting of the theorem with $\overline M=\overline N_1-\overline N_2$
and $\overline L_i=\overline N_1+\overline N_2$. 
We can even adjust $N_1$ by a multiple to make 
$$
(N_1- N_2)\cdot ( N_1+ N_2)^{n-1}=0.
$$
Note that $N_1,N_2$ are not equal, we cannot apply the equality part of the theorem. The problem is that $\overline N_1+\overline N_2$ is not ample. 
\end{enumerate}

\end{example}

\subsection{Calabi--Yau theorem}
When the line bundle $M$ is trivial, Theorem \ref{hodge} is essentially a result on projective varieties over local fields. 
It gives some cases of the non-archimedean Calabi--Yau Theorem. 
In fact, the proof generalizes to the following full generality.  

Let $K$ be either $\BC$ or a field with a non-trivial complete non-archimedean
 absolute value $|\cdot|$. 
Let $X$ be a geometrically connected projective variety over $K$, and $L$ be an ample line bundle on $X$ endowed with a continuous semipositive $K$-metric $\|\cdot\|$. Then $(L, \|\cdot\|)$ induces
a canonical semipositive measure $c_1(L, \|\cdot\|)^{\dim X}$ on the
analytic space $X^{\rm an}$. We explain it as follows.

If $K=\BC$, then $X^{\rm an}$ is just the complex analytic
space $X(\BC)$. The measure $c_1(L, \|\cdot\|)^{\dim X}$ is
just the determinant of the Chern form $c_1(L, \|\cdot\|)$ which is locally
defined by
$$\displaystyle c_1(L, \|\cdot\|)=\pp \log \|\cdot\|$$
in complex analysis.

If $K$ is non-archimedean, $X^{\rm an}$ is the Berkovich space associated to the
variety $X$ over $K$. It is a
Hausdorff, compact and path-connected topological space.
Furthermore, it naturally includes the set of closed points of $X$.
We refer to \cite{Be} for more details on the space.
The metric $\|\cdot\|$ being semipositive, in the sense of \cite{Zh3},
means that it is a uniform limit of metrics induced by ample
integral models of $L$. The canonical measure $c_1(L,
\|\cdot\|)^{\dim X}$ is constructed by Chambert-Loir \cite{Ch}
in the case that $K$ contains a countable and dense subfield, 
and extended to the general case by Gubler \cite{Gu2}.

\begin{thm} \label{calabiyau}
Let $L$ be an ample line bundle over $X$, and $\|\cdot\|_1$ and
$\|\cdot\|_2$ be two semipositive metrics on $L$. Then
$$
c_1(L, \|\cdot\|_1)^{\dim X}=c_1(L, \|\cdot\|_2)^{\dim X}
$$
if and only if $\ds\frac{\|\cdot\|_1}{\|\cdot\|_2}$ is a constant.
\end{thm}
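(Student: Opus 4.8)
The plan is to deduce Theorem~\ref{calabiyau} from the equality case of Theorem~\ref{hodge} (or rather its local reformulation) applied with $\overline M$ the difference of the two metrized bundles. First I would set up the arithmetic framework: choose a number field $k$ with a place $v$ whose completion $k_v$ is $K$ (in the non-archimedean case, or $k$ with an archimedean place if $K=\BC$), descend $X$ and $L$ to a projective variety over $k$ after a finite extension, and realize the two semipositive $K$-metrics $\|\cdot\|_1,\|\cdot\|_2$ at the place $v$ as the $v$-components of adelic metrics on $L$ over $k$, taking the standard (integral-model-induced) metric at all other places. This yields adelic line bundles $\overline L_1,\overline L_2$ on $X/k$ with the same underlying ample $L$, both nef since the given metrics are semipositive and hence uniform limits of ample-integral-model metrics. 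Set $\overline M=\overline L_1-\overline L_2$; then $M=\CO_X$ is numerically trivial, so $M\cdot L^{\,n-1}=0$ automatically, and $\overline M$ is $\overline L_i$-bounded because $M$ is trivial and the difference of two semipositive metrics is bounded.

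Next I would compute the self-intersection $\overline M^2\cdot \overline L^{\,n-1}$ and identify it, via the arithmetic intersection formalism and induction on dimension, with (a positive multiple of) the local integral
$$
\int_{X^{\rm an}_v}\log\frac{\|\cdot\|_1}{\|\cdot\|_2}\,\bigl(c_1(L,\|\cdot\|_1)^{\dim X}-c_1(L,\|\cdot\|_2)^{\dim X}\bigr),
$$
the contributions at the other places cancelling because the metrics agree there. Hence $c_1(L,\|\cdot\|_1)^{\dim X}=c_1(L,\|\cdot\|_2)^{\dim X}$ forces $\overline M^2\cdot \overline L^{\,n-1}=0$. Here is where the inequality part of Theorem~\ref{hodge} (or its immediate local consequence) confirms the integral is $\le 0$ for each one-sided comparison and the Calabi--Yau hypothesis pins it to zero. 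Then I invoke the equality case: since $M$ is numerically trivial we may, by the strengthening noted after Theorem~\ref{hodge}, drop bigness and weaken ``$\overline L_i$ ample'' to ``$L_i$ ample,'' which holds; so the equality $\overline M^2\cdot\overline L^{\,n-1}=0$ implies $r\overline M=\pi^*\overline M_0$ for some adelic line bundle $\overline M_0$ on $\Spec k$ and some $r>0$. Translating back, $(\|\cdot\|_1/\|\cdot\|_2)^r$ is (the pullback of) a constant metric on the structure sheaf at $v$, i.e.\ $\|\cdot\|_1/\|\cdot\|_2$ is constant on $X^{\rm an}_v$; the converse direction is trivial.

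The main obstacle I anticipate is the passage from the global (number-field) statement to the stated local generality: Theorem~\ref{hodge} is proved only over number fields, whereas Theorem~\ref{calabiyau} allows \emph{any} complete field $K$ with a nontrivial absolute value, including fields not arising as completions of number fields (e.g.\ $\BC_p$, or function-field-type local fields, or $\BC$ itself where the relevant input is the complex Calabi--Yau theorem of Yau rather than Theorem~\ref{hodge}). To handle this one must either (i) reduce the general $K$ to the number-field case by a specialization/approximation argument—approximating the semipositive metrics and the data $(X,L)$ over subfields that are finitely generated, then over number fields, while controlling the measures under these limits—or (ii) re-run the variational method of the proof of Theorem~\ref{hodge} directly in the purely local setting, where no arithmetic intersection numbers at auxiliary places are needed and the argument becomes a statement about semipositive metrics on a single Berkovich (or complex) analytic space. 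The cleanest route, and the one I would pursue, is (ii): extract from the proof of the equality case the local variational inequality—that the functional $\overline M\mapsto \overline M^2\cdot\overline L^{\,n-1}$ is, up to sign, a strictly convex energy whose vanishing forces the potential to be locally constant—and then use connectedness of $X^{\rm an}$ (compact, path-connected in the Berkovich case; connected in the complex case) to upgrade ``locally constant'' to ``constant,'' giving the result over all $K$ at once. Carefully formulating and justifying this local variational principle, including the regularization of semipositive metrics needed to make the integration-by-parts steps rigorous, is the technical heart of the argument.
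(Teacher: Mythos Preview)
Your route (ii) is exactly what the paper does: it formulates a purely local Hodge index theorem (Theorem~\ref{hodge6}) for vertical metrized line bundles over an arbitrary complete field $K$, notes that its proof is a direct translation of the vertical case of Theorem~\ref{hodge} (using Gubler's intersection theory when $O_K$ is non-noetherian), and deduces Theorem~\ref{calabiyau} from it. The number-field descent of your route (i) is neither needed nor attempted.

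There is one concrete slip in your computation. The integral
\[
\int_{X^{\rm an}} f\,\bigl(c_1(L,\|\cdot\|_1)^{n}-c_1(L,\|\cdot\|_2)^{n}\bigr),
\qquad f=-\log\frac{\|\cdot\|_1}{\|\cdot\|_2},
\]
is \emph{not} a single $\overline M^{2}\cdot\overline L^{\,n-1}$. Writing $\overline L_i=(L,\|\cdot\|_i)$ and $\overline M=\overline L_1-\overline L_2$, it is $\overline M\cdot(\overline L_1^{\,n}-\overline L_2^{\,n})$, which telescopes as
\[
\sum_{i=0}^{n-1}\overline M^{2}\cdot\overline L_1^{\,i}\cdot\overline L_2^{\,n-1-i}.
\]
Under the Calabi--Yau hypothesis this sum vanishes; each summand is $\le 0$ by the inequality part of the local Hodge index theorem, so each summand is zero, whence $\overline M^{2}\cdot(\overline L_1+\overline L_2)^{n-1}=0$. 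Now $\overline M$ is visibly $(\overline L_1+\overline L_2)$-bounded, and the equality case of Theorem~\ref{hodge6} gives $\overline M\in\pi^*\wh\Pic(K)$, i.e.\ $f$ is constant. This is precisely the paper's argument; your sketch had the right shape but misidentified which intersection number the integral computes.
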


The history of the theorem in the complex case is as follows.
In the 1950s, Calabi \cite{Ca1, Ca2} made the following famous conjecture:
\emph{Let $X$ be a compact complex manifold endowed with a Kahler form
$\omega$, 
and let $\Omega$ be a positive smooth volume form on $X$ such that 
$\int_X \Omega =\int_X \omega^{\dim X}$.
Then there exists a smooth real-valued function $\phi$ on $X$ such that
$(\omega+i\partial \overline\partial \phi)^{\dim X}=\Omega.$}
Calabi proved that the function $\phi$ is unique up to scalars (if it
exists).
The existence of the function is much deeper, and was finally solved by S. T.
Yau
in the seminal paper \cite{Ya} in 1977.
Now the whole results are called the Calabi--Yau theorem.

If $\omega$ is cohomologically equivalent to a line bundle $L$ on $X$, 
the results can be stated in terms of existence and uniqueness of metrics
$\|\cdot\|$ on $L$
with $c_1(L, \|\cdot\|)^{\dim X}=\Omega$. 

Theorem \ref{calabiyau} includes the non-archimedean analogue of the uniqueness
part of the Calabi--Yau theorem.
The ``if'' part of the theorem is trivial by definition. For archimedean $K$,
the positive smooth
case is due to Calabi as we mentioned above, and the continuous semipositive
case is due to Kolodziej \cite{Ko}.
Afterwards Blocki \cite{Bl} provided a very simple proof of Kolodziej's result.

The theorem will be proved analogously to the case of trivial $M$ of Theorem \ref{hodge}. It can be viewed as a local arithmetic Hodge index theorem. 
Both theorems are proved utilizing Blocki's idea.

\subsection{Algebraic dynamics}

Let $X$ be a projective variety over a field $K$. 
A \emph{polarizable algebraic dynamical system on $X$}
is a morphism $f:X\to X$ such that there is an ample $\QQ$-line bundle  
$L\in \Pic(X)\otimes_{\BZ} \BQ$ satisfying $f^*L =qL$ from some rational number $q>1$. We call $L$ a \emph{polarization of $f$}, and call the triple $(X,f,L)$ a \emph{polarized algebraic dynamical system}.
Let $\Prep (f)$ denote the set of \emph{preperiodic
points}, i.e.,
$$
\Prep(f):=\{x\in X(\overline K)\ |\ f^m(x)=f^n(x) {\rm\ for\ some\ } m,n\in \BN,
\ m\neq n \}.
$$
A well-known result of Fakhruddin \cite{Fak} asserts that $\Prep (f)$ is
Zariski dense in $X$.

Denote by $\DS(X)$ the set of all polarizable algebraic dynamical systems 
$f$ on $X$. Note that we do \emph{not} require elements of $\DS(X)$ to be polarizable by the same ample line bundle or have the same dynamical degree $q$.

\begin{thm}\label{dynamicsmain}
Let $X$ be a projective variety over a number field $K$.
For any $f,g\in \DS(X)$, the following are equivalent:
\begin{itemize}
\item[(1)] $\Prep(f)= \Prep(g)$;
\item[(2)] $g\Prep(f)\subset\Prep(f)$;
\item[(3)] $\Prep(f)\cap \Prep(g)$ is Zariski dense in $X$.
\end{itemize}
\end{thm}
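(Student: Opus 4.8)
The plan is to attach to each $f\in\DS(X)$ a canonical nef adelic line bundle $\overline L_f$ extending a polarization $L_f$, obtained by Tate's limiting argument $\overline L_f=\lim_{k\to\infty} q^{-k}(f^k)^*\overline A$ for any ample adelic model $\overline A$ of $L_f$; this $\overline L_f$ is the unique nef $f$-invariant extension, it is in fact ample (in the sense of Definition \ref{def-positivity}(3)), and its associated canonical height $\widehat h_f$ vanishes exactly on $\Prep(f)$ by the standard Call--Silverman and Zhang theory. The implications $(1)\Rightarrow(2)$ and $(1)\Rightarrow(3)$ are trivial (note $\Prep(f)$ is Zariski dense by Fakhruddin \cite{Fak}), so the content is $(2)\Rightarrow(1)$ and $(3)\Rightarrow(1)$. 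For both, the key is to show that if the hypothesis holds then $\overline L_f$ and $\overline L_g$ are, up to positive rational multiples and a twist by a vertical bundle pulled back from $\Spec K$, ``compatible''; concretely I want to produce the equality $r\overline M=\pi^*\overline M_0$ of the kind furnished by the equality case of Theorem \ref{hodge}, where $\overline M$ measures the difference between the two canonical metrics on a common underlying line bundle.

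The core step is the following. After replacing $L_f,L_g$ by suitable positive multiples we may assume $L_f$ and $L_g$ have a common multiple; more robustly, set $M_1=L_f$ viewed inside $\Pic(X)_\QQ$ and compare $\overline L_f$ with the canonical metric that $g$ would impose. The cleanest route: consider $\overline L=\tfrac12(\overline L_f+\overline L_g)$ and $\overline M=\overline L_f-\overline L_g$ after rescaling so that $M=L_f-L_g$ satisfies $M\cdot L^{n-1}=0$ on $X$ (possible because we may scale $L_f$ by a rational number, using that $L_f\cdot L^{n-1}>0$ and $L_g\cdot L^{n-1}>0$). Hypothesis $(3)$ (resp. $(2)$, which implies (3) since $g$ maps the Zariski-dense set $\Prep(f)$ into $\Prep(f)$, and preperiodic points of $g$ are dense in their Zariski closure which one argues is all of $X$) gives a Zariski-dense set of points of height zero for both $\widehat h_f$ and $\widehat h_g$; by the arithmetic equidistribution/positivity input (Zhang's inequality bounding the essential minimum by the normalized height, plus $\overline L_f,\overline L_g$ nef) this forces $\overline M^2\cdot\overline L^{n-1}=0$. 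Now invoke the equality case of Theorem \ref{hodge}: since $\overline L$ is ample and $\overline M$ is $\overline L$-bounded (both $\overline L_f$ and $\overline L_g$ are nef, hence $m\overline L\pm\overline M$ is nef for $m$ large), we conclude $r\overline M=\pi^*\overline M_0$. Pulling back to $X(\overline K)$, $\pi^*\overline M_0$ contributes an additive constant to heights, and combined with $M=L_f-L_g$ numerically describing a relation in $\Pic(X)_\QQ$, one deduces $\widehat h_f$ and $\widehat h_g$ are proportional up to a bounded function, hence have the same zero locus; that is $\Prep(f)=\Prep(g)$.

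The main obstacle I expect is \emph{legitimately reducing to the hypotheses of Theorem \ref{hodge}} — specifically (a) arranging the normalization $M\cdot L^{n-1}=0$ while keeping $L$ a genuine $\QQ$-polarization with ample adelic extension, which requires knowing the canonical bundles $\overline L_f,\overline L_g$ interact well under the scaling and that $L_f,L_g$ are not ``accidentally proportional in a way that trivializes the problem'', and (b) extracting $\overline M^2\cdot\overline L^{n-1}=0$ from the density hypothesis: this needs the full strength of the arithmetic intersection-height identities (the fact that $\overline M\cdot\overline L^n$ and $\overline M^2\cdot\overline L^{n-1}$ can be read off from heights of the dense zero-set via successive minima), which in turn rests on $\overline L$ being ample so that equidistribution applies. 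A secondary subtlety, flagged in ingredient (3) of the introduction, is that $X$ is only assumed normal projective, so one must be careful that the canonical bundle construction, $\overline L$-boundedness, and the intersection theory all go through in the normal (not necessarily regular) setting; the paper's extension of the Lefschetz theorems to normal varieties is exactly what makes this legitimate. Once the equality $r\overline M=\pi^*\overline M_0$ is in hand, the passage back to $\Prep(f)=\Prep(g)$ is formal.
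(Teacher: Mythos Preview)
Your overall strategy---produce a nef canonical adelic bundle for each dynamical system, force a top-degree vanishing from the density of common preperiodic points via Zhang's successive minima, and invoke the equality case of Theorem~\ref{hodge}---is exactly the paper's strategy. But there is a genuine gap in your reduction to the hypotheses of Theorem~\ref{hodge}, and it is not the one you flag.

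The claim that $\overline L=\tfrac12(\overline L_f+\overline L_g)$ is \emph{ample} in the sense of Definition~\ref{def-positivity}(3) is false. The canonical invariant bundle $\overline L_f$ always satisfies $\overline L_f^{\,n+1}=0$ (apply $f^*$ and use $f^*\overline L_f=q\overline L_f$), and under hypothesis~(3) the cross terms $\overline L_f^{\,i}\cdot\overline L_g^{\,n+1-i}$ vanish as well (this is precisely what the successive-minima argument gives), so $(\overline L_f+\overline L_g)^{n+1}=0$. Thus $\overline L$ fails the strict positivity on $Y=X$. The paper confronts this explicitly: after obtaining $(\overline L_f-\overline M_g)^2\cdot(\overline L_f+\overline M_g)^{n-1}=0$ it notes that $\overline L_f+\overline M_g$ is not ample, and fixes this by adding $\pi^*\overline C$ with $\deg(\overline C)>0$. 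That fix is licensed only by the remark following Theorem~\ref{hodge}, which requires the underlying bundle $M$ to be \emph{numerically trivial}, not merely to satisfy $M\cdot L^{n-1}=0$. Your rescaling of $L_f$ achieves only the latter; with $L_f,L_g$ genuinely non-proportional in $\NS(X)_\QQ$ the difference $cL_f-L_g$ is never numerically trivial, and the equality case of Theorem~\ref{hodge} cannot be applied.

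The paper's substantial extra ingredient, which your proposal is missing, is the theory of $f$-admissible liftings in \S3.1: the section $\ell_f:\NS(X)_\QQ\to\Pic(X)_\QQ$ (built from the semisimplicity Theorem~\ref{semisimplicity}) and its adelic refinement $\wh\ell_f$. Given an ample class $\xi\in\NS(X)_\QQ$, one sets $L=\ell_f(\xi)$ and $M=\ell_g(\xi)$; then $L-M\in\Pic^0(X)_\QQ$ is numerically trivial \emph{by construction}, and the ampleness obstruction disappears. The price is that $L$ need not satisfy $f^*L=qL$, so one must prove separately that $\overline L_f=\wh\ell_f(\xi)$ is nef (Theorem~\ref{positivity}), which is not obvious and uses a delicate subsequence argument. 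Your proposal is essentially the paper's ``early version'' that works only when $f$ and $g$ share a common polarization; the general case genuinely needs the admissible-lifting machinery.

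A smaller issue: your argument for $(2)\Rightarrow(3)$ is incomplete. Knowing $g\Prep(f)\subset\Prep(f)$ does not by itself place points of $\Prep(f)$ in $\Prep(g)$. The paper uses Northcott: for each $d$, the finite set $\Prep(f,d)$ of $f$-preperiodic points of degree $<d$ is stable under $g$ (since $g$ is defined over $K$), hence consists of $g$-preperiodic points; so $\Prep(f)\subset\Prep(g)$, and Fakhruddin's density of $\Prep(f)$ gives (3).
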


The theorem will be proved over any field $K$ in our second paper \cite{YZ}.  The proof of the number field case in this paper gives a rough idea of our approach for the general case.  

In an early version of the series, we require $f$ and $g$ to be polarizable by the same ample line bundle. The proof combines the equidistribution theorem of Yuan \cite{Yu} generalizing that of Szpiro--Ullmo--Zhang \cite{SUZ} et al, and the Calabi--Yau theorem in Theorem \ref{calabiyau}.
But we have removed this restriction in the current version by introducing the arithmetic Hodge index theorem, which is more powerful than the Calabi--Yau theorem.

\begin{remark}
When $X=\BP^1$, the theorem is independently proved by M. Baker and L. DeMarco \cite{BD} over any field $K$ of characteristic zero during the preparation of this paper. Their proof also applies to positive characteristics. 
Their treatment for the number field case is the same as our treatment in the earlier version, while the method for the general case is quite
different.
\end{remark}

\begin{remark}
The following are some works related to the above theorem (when $K$ is a number field):
\begin{itemize}
\item[(1)] A. Mimar \cite{Mi} proved the theorem in the case that $X=\BP^1$.
\item[(2)] If $f$ is a Latt\`es map on $\BP^1$ or a
power map on $\BP^d$ induced by $(\BG_m)^d$, the theorem is implied by the
explicit description of
$g$ by S. Kawaguchi and J. H. Silverman \cite{KS}.
\item[(3)] In the case that $X=\BP^1$, C. Petsche, L.
Szpiro, and T. Tucker \cite{PST} found a further equivalent
statement in terms of heights and intersections.
\end{itemize}
\end{remark}

Now we explain some ingredients in our proof of Theorem \ref{dynamicsmain}. The hard part is to show that (3) implies (1). 

Let $f,g\in\DS(X)$. 
We first consider the case that $f$ and $g$ are polarizable by the same ample line bundle $L$. 
Then we have an $f$-invariant adelic line bundle $\overline L_f$ and a
$g$-invariant adelic line bundle $\overline L_g$. 
Apply the successive minima to the sum $\overline L_f+\overline L_g$. 
By the assumption that $\Prep(f)\cap \Prep(g)$ is Zariski dense, 
the essential minimum of $\overline L_f+\overline L_g$ is 0. It follows that $(\overline L_f+\overline L_g)^{n+1}=0$. 
By the nefness, it gives $\overline L_f^i\cdot \overline L_g^{n+1-i}=0$ for any $i$. 
Hence, we have 
$$(\overline L_f-\overline L_g)^{2}\cdot (\overline L_f+\overline L_g)^{n-1}=0.$$
Then we apply our arithmetic Hodge index theorem to conclude that 
$\overline L_f=\overline L_g$. 
It follows that $f$ and $g$ have the same canonical height. 
Then they have the same set of preperiodic points.

The general case that $f$ and $g$ are not polarizable by the same ample line bundle is much more difficult. We develop a theory of admissible adelic line bundles to overcome the difficulty. The idea is to write every line bundle as a sum of eigenvectors of $f^*$. For any ample class $\xi\in \NS(X)_\QQ$, 
we first prove that there is a unique ``$f$-admissible lifting''
$L\in \Pic(X)_\QQ$ of $\xi$.
Note that $L$ does not necessarily polarize $f$.  
Then we construct an $f$-admissible adelic line bundle $\overline L_f$ extending 
$L$. We prove that $\overline L_f$ is nef. 
Similarly, from $\xi$, we have a unique
``$g$-admissible lifting''
$M\in \Pic(X)_\QQ$ of $\xi$, and a unique $g$-admissible adelic line bundle $\overline M_g$ extending $M$. Here we a priori do not have $L=M$. But we can still apply the arithmetic Hodge index theorem by the above argument to force 
$\overline L_f=\overline M_g$.

\subsubsection*{Acknowlegement}

The series grows out of the authors' attempt to understand a
counter-example by Dragos Ghioca and Thomas Tucker on the previous
dynamical Manin--Mumford conjecture. The results have been reported
in ``International Conference on Complex Analysis and Related
Topics" held in Chinese Academy of Sciences on August
20-23, 2009.

The authors would like to thank Xander Faber, Dragos Ghioca, Walter Gubler,
Yunping Jiang,
Barry Mazur, Thomas Tucker, Yuefei Wang, Chenyang Xu, Shing-Tung Yau, Yuan Yuan, Zhiwei Yun
and Wei Zhang for many helpful discussions during the preparation of
this paper.

The first author was supported by the Clay Research Fellowship and is supported by a grant from the NSF of the USA, and the
second author is supported by a grant from the NSF of the USA and a grant from
Chinese Academy of Sciences.

\section{Arithmetic Hodge index theorem}

The goal of this section is to prove Theorem \ref{hodge}. 
After introducing some basic positivity notions, 
we prove the theorem by a few steps which are clear from the section titles. 

\subsection{Terminology on arithmetic positivity}

Let $K$ be a number field. By an \emph{arithmetic variety} $\CX$ over $O_K$, we mean an integral scheme $\CX$, projective and flat over $O_K$. 

Recall that a \emph{Hermitian line bundle} on $\CX$ is a pair $\CLL=(\CL,\|\cdot\|)$ consisting of a line bundle $\CL$ on $\CX$ and a metric $\|\cdot\|$ on $\CL(\CC)$ invariant under the action of the complex conjugation. The metric is assumed to satisfy the regularity that, for any analytic map from any complex open ball $B$ (of any dimension) to $\CX(\CC)$, the pull-back of $\|\cdot\|$ gives a smooth Hermitian metric on the pull-back of $\CL(\CC)$ to $B$. We say that the metric is \emph{semipositive} if any such pull-back has a positive semi-definite Chern form. 

\begin{defn} \label{def-positivity2} 
\begin{enumerate} [(1)] 
\item We say that a Hermitian line bundle $\CLL=(\CL,\|\cdot\|)$ on $\CX$ is \emph{nef} if the Hermitian metric $\|\cdot\|$ is semipositive, and the intersection $(\CLL|_\CY)^{\dim \CY}\geq 0$ for any closed subvariety $\CY$ of $\CX$.
\item We say that a Hermitian line bundle $\CLL=(\CL,\|\cdot\|)$ on $\CX$ is \emph{ample} if the generic fiber $\CL_K$ is ample on $X$, $\CLL$ is nef, and the intersection $(\CLL|_\CY)^{\dim \CY}> 0$ for any horizontal closed subvariety $\CY$ of $\CX$.
\item We say that a Hermitian line bundle $\CLL=(\CL,\|\cdot\|)$ on $\CX$ is \emph{vertical} if the generic fiber $\CL_K$ is trivial on $X$. 
\end{enumerate}
\end{defn}
Denote by $\wh\Pic(\CX)$ the group of Hermitian bundles on $\CX$. 
Denote by $\wh\Pic(\CX)_{\nef}$ (resp. $\wh\Pic(\CX)_{\amp}$, $\wh\Pic(\CX)_{\vert}$) the set of nef (resp. ample, vertical) Hermitian line bundles on $\CX$.

Let $X$ be a projective variety over $K$. By an \emph{integral model $\CX$ of $X$ over $O_K$}, we mean an arithmetic variety $\CX$ over $O_K$ with a fixed isomorphism $\CX_K=X$. 
Let $L$ be a line bundle on $X$. If $\CLL$ is a Hermitian line bundle on $\CX$ with generic fiber $\CL_K=L$, then we say that $\CLL$ is an \emph{integral model} of $L$ on $\CX$. We also say that $(\CX,\CLL)$ is an \emph{integral model} of $(X,L)$ over $O_K$. 

An \emph{adelic line bundle on $X$} is a pair $\overline L=(L,\{\|\cdot\|_v\}_v)$ consisting of a line bundle $L$ on $X$ and a collection of $K_v$-metrics $\|\cdot\|_v$ of $L(\overline K_v)$ on $X(\overline K_v)$ over all places $v$ of $K$. The $K_v$-metrics is assumed to be continuous and Galois invariant, and the collection is assumed to be coherent in that there is an integral model of $(X,L)$ inducing a collection of $K_v$-metrics on $L(\overline K_v)$ which agrees with $\{\|\cdot\|_v\}_v$ for all but finitely many places $v$. 
We refer to \cite{Zh2} for more details on the definition. 
The following is a copy of Definition \ref{def-positivity}, except that (5) is new.  
\begin{defn} 
Let $\overline L, \overline M$ be adelic line bundles on $X$.
We make the following definitions. 
\begin{enumerate} [(1)]
\item We say that $\overline L$ is \emph{nef} if it the adelic metric of $L$ is a uniform limit of metrics induced by nef Hermitian line bundles on integral models of $X$. 
\item We say that $\overline L$ is \emph{integrable} if it is the difference of two nef adelic line bundles on $X$.  
\item We say that $\overline L$ is \emph{ample} if
$L$ is ample, $\overline L$ is nef, and $(\overline L|_Y)^{\dim Y+1}> 0$ for any closed subvariety $Y$ of $X$.
\item We say that $\overline M$ is \emph{$\overline L$-bounded} if there is an integer $m >0$ such that both $m\overline L+ \overline M$ and $m\overline L- \overline M$ are nef. 
\item We say that $\overline L$ is \emph{vertical} if $L$ is trivial on $X$.
\end{enumerate}
\end{defn}
Denote by $\wh\Pic(X)$ the group of adelic line bundles on $X$. 
By the above definitions, $\wh\Pic(X)$ has subsets $\wh\Pic(X)_{\nef}$, $\wh\Pic(X)_{\amp}$, $\wh\Pic(X)_{\intb}$ and $\wh\Pic(X)_{\vert}$. 

All the positivity notions can be extended to the concepts of $\QQ$-line bundles and $\RR$-line bundles. For example, the space of integrable $\QQ$-line bundles form
a vector space $\wh\Pic(X)_{\intb}\otimes_\ZZ\QQ$. 

In the end, we present a lemma which shows the strength of the arithmetic ampleness defined above. 
\begin{lem} \label{ample}
Let $\overline L$ be an adelic line bundle on $X$. 
Then the following are equivalent:
\begin{enumerate}[(1)]
\item $\overline L$ is ample, i.e., $L$ is ample, $\overline L$ is nef, and $(\overline L|_Y)^{\dim Y+1}> 0$ for any closed subvariety $Y$ of $X$.
\item $L$ is ample, and $\overline L-\pi^*\overline N$ is nef for some adelic line bundle $\overline N$ on $\Spec K$ with $\wh\deg(\overline N)>0$.
\end{enumerate}
\end{lem}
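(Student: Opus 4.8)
The implication $(2)\Rightarrow(1)$ is the easy direction, so I would dispose of it first. Suppose $L$ is ample and $\overline L - \pi^*\overline N$ is nef for some $\overline N$ on $\Spec K$ with $\wh\deg(\overline N) > 0$. Since $\overline L = (\overline L - \pi^*\overline N) + \pi^*\overline N$ and $\pi^*\overline N$ is nef (its archimedean metric is a constant, and its integral models are nef), $\overline L$ is nef as a sum of two nef adelic line bundles. For the strict inequality, restrict everything to a closed subvariety $Y$ of dimension $d$. Then
$$
(\overline L|_Y)^{d+1} = ((\overline L - \pi^*\overline N)|_Y + \pi^*\overline N|_Y)^{d+1} \geq (d+1)\, (\overline L - \pi^*\overline N|_Y)^{d} \cdot (\pi^*\overline N|_Y),
$$
using that all cross terms involving a nef adelic line bundle are $\geq 0$. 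By the projection formula the last term equals $(d+1)\,\wh\deg(\overline N) \cdot (L|_Y)^d > 0$ since $L$ is ample on $Y$ and $\wh\deg(\overline N) > 0$. Hence $(\overline L|_Y)^{d+1} > 0$, which is $(1)$.

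The substantive direction is $(1)\Rightarrow(2)$. Here the natural strategy is: first pass to an integral model, use arithmetic ampleness results of Zhang/Moriwaki there, then take limits. Concretely, since $\overline L$ is nef, it is a uniform limit of metrics induced by nef Hermitian line bundles $\overline{\mathcal L}_k$ on integral models $\mathcal X_k$; since $(\overline L|_Y)^{\dim Y + 1} > 0$ for all $Y$ and $L$ is ample on the generic fiber, I would argue that for $k$ large the model $\overline{\mathcal L}_k$ is \emph{arithmetically ample} in the sense of Definition \ref{def-positivity2}, or at least nef with all \emph{horizontal} intersection numbers positive. Then I invoke the known fact (effectivity of ample Hermitian line bundles, after Zhang's criterion for arithmetic ampleness) that an arithmetically ample $\overline{\mathcal L}$ admits, after scaling, a decomposition $\overline{\mathcal L} = \overline{\mathcal A} + \pi^*\overline{\mathcal N}_0$ with $\overline{\mathcal A}$ nef and $\wh\deg(\overline{\mathcal N}_0) > 0$ — equivalently, that subtracting a small positive constant from the metric (a twist by $\pi^*\overline N$ with $\wh\deg \overline N > 0$) keeps it nef. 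Transporting this through the uniform limit, $\overline L - \pi^*\overline N$ is a uniform limit of nef Hermitian line bundles, hence nef. This gives $(2)$.

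The main obstacle, and the reason this lemma is nontrivial, is exactly the difficulty flagged in the introduction: arithmetic ampleness is not preserved under pull-back between integral models, nor under perturbation, so I cannot simply say "the limiting models are ample." I expect the real work to be in controlling the constant: showing that the amount $\epsilon > 0$ by which one can decrease the metric while staying nef does \emph{not} shrink to $0$ along the approximating sequence $\overline{\mathcal L}_k$. This should follow from a uniform lower bound coming from the strict positivity $(\overline L|_Y)^{\dim Y+1} > 0$ applied to the (finitely many relevant) subvarieties $Y$, combined with a Nakai--Moishezon-type criterion for arithmetic ampleness on a fixed model; quantifying this uniformly across the projective system is where care is needed. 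Once a uniform $\epsilon$ is in hand, the limit argument is routine.
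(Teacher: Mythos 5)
Your proof of $(2)\Rightarrow(1)$ is essentially the paper's (the paper just calls it ``easy to see''); the binomial expansion with $(\pi^*\overline N)^2=0$ and the projection formula is the right way to spell it out.

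For $(1)\Rightarrow(2)$ there is a genuine gap, and you have correctly located where it is but not how to close it. Your plan is to pass to a sequence of nef models $\overline{\CL}_k$, argue they are ``arithmetically ample'' for $k$ large, and then subtract a small constant at the model level with a uniform $\epsilon$. Two problems. First, strict positivity $(\overline L|_Y)^{\dim Y+1}>0$ for every closed subvariety $Y\subset X$ does \emph{not} transfer to the approximating models: there are infinitely many $Y$, and nothing prevents $(\overline{\CL}_k|_{\CY})^{\dim \CY}$ from being positive but tending to $0$ along a sequence of horizontal $\CY$, so the $\overline{\CL}_k$ need not be arithmetically ample for any fixed $k$. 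Your parenthetical ``finitely many relevant subvarieties $Y$'' is not justified --- there is no finiteness to appeal to here. Second, even granting ampleness of each model, the amount you can subtract while staying nef is exactly the uniformity you flag as ``where care is needed''; that is the content of the lemma, not a step you can defer.

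The paper sidesteps the model-by-model uniformity issue entirely by working with a limit-stable invariant: the absolute minimum $\lambda(\overline L)=\inf_{x\in X(\overline K)} h_{\overline L}(x)$, which is defined intrinsically at the adelic level and hence does not degrade along the approximating sequence. The positivity $\lambda(\overline L)>0$ is proved by combining the arithmetic Hilbert--Samuel formula for adelic line bundles (\cite[Theorem 1.7]{Zh3}, after \cite{GS2}) with the adelic Minkowski theorem: after replacing $\overline L$ by a multiple one produces a nonzero section $s\in H^0(X,L)$ with $\|s\|_{\BA}<1$; any closed point $x$ with $s(x)\ne 0$ then has $\wh\deg(\overline L|_x)\ge -\log\|s\|_{\BA}>0$, and points on $\div(s)$ are handled by induction on dimension (the hypotheses of $(1)$ restrict to $\div(s)$). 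Once $\lambda(\overline L)>0$ is known, one takes $\overline N$ with $0<\wh\deg(\overline N)<\lambda(\overline L)$ and checks nefness of $\overline L-\pi^*\overline N$; no uniform-$\epsilon$-across-models argument is needed. This Hilbert--Samuel/Minkowski input and the shift from ``model ampleness'' to the intrinsic quantity $\lambda(\overline L)$ are the missing ideas in your proposal.
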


\begin{proof}
It is easy to see that (2) implies (1). 
The implication from (1) to (2) follows the idea of the successive minima of \cite{Zh2}. 
Assume (1) in the following. We claim that the absolute minima
$$
\lambda(\overline L)=\inf_{x\in X(\overline K)} h_{\overline L}(x) 
$$ 
is strictly positive. 
Apply the arithmetic Hilbert--Samuel formula for adelic line bundles in \cite[Theorem 1.7]{Zh3}, which extends the original formula of Gillet--Soul\'e \cite{GS2}. Combine it with the adelic Minkowski theorem. 
Replacing $\overline L$ by a multiple if necessary, we can find a nonzero section $s\in H^0(X,L)$ such that $\|s\|_{\BA} < 1$. 
Here $$
\|s\|_{\BA}=\prod_{v} \|s\|_{v,\sup}, \quad
\|s\|_{v,\sup}=\sup_{x\in X(\overline K_v)} \|s(x)\|_{v}.
$$
For any closed point $x\in X(\overline K)$ such that $s(x)\neq 0$, 
use $s$ to compute the height of $x$. It gives
$$
\wh\deg(\overline L|_x) \geq -\log \|s\|_{\BA}>0. 
$$
To show $\lambda>0$, it suffices to consider algebraic points of the support of $\div(s)$. It has a smaller dimension, and can be done by induction. 

Once we have $\lambda>0$, the proof is immediate. In fact, we simply have
$$
\lambda(\overline L-\pi^*\overline N)=\lambda(\overline L)-\wh\deg(\overline N) > 0 
$$ 
for any adelic line bundle $\overline N$ on $\Spec K$ with 
$\wh\deg(\overline N)<\lambda$. 
In this case, $\overline L-\pi^*\overline N$ is nef.  
\end{proof}

Under some regularity assumption on the metric at infinity, the arithmetic Nakai--Moishezon criterion of Zhang \cite{Zh3} holds for an ample adelic bundle $\overline L$. In particular, the regularity assumption is satisfied automatically if $X$ is smooth over $K$.

\subsection{Vertical case}

We prove Theorem \ref{hodge} assuming that $M$ is trivial on $X$.

\subsubsection*{The inequality} 

To apply the full strength of intersection theory, we need some regularity
property on arithmetic varieties. We say that an arithmetic variety is \emph{vertically factorial} 
if all irreducible components of its special fibers are Cartier divisors. 
Let $\CX$ be any arithmetic variety. After blowing up the irreducible components
of the special fibers of $\CX$ which are not Carter divisors successively (in any order), we end up with a vertically factorial arithmetic variety
$\CX'$ which dominates $\CX$.

\begin{prop}\label{hodge2}
Let $\pi:\CX\to\Spec O_K$ be an arithmetic variety of relative dimension $n$.
Then the following are true:
\begin{itemize}
\item[(1)] If $\overline \CM\in \wh\Pic(\CX)_{\vert}$ and $\overline\CL_1, \cdots, \overline\CL_{n-1}\in \wh\Pic(\CX)_{\nef}$,
then
$$
\overline \CM^2\cdot \CLL_1\cdot \CLL_2 \cdots \CLL_{n-1}\leq 0.
$$

\item[(2)] If $\CMM_1,\CMM_2\in \wh\Pic(\CX)_{\vert}$ and $\overline\CL_1, \cdots, \overline\CL_{n-1}\in \wh\Pic(\CX)_{\nef}$, then
$$
(\CMM_1\cdot \CMM_2 \cdot \CLL_1\cdot \CLL_2 \cdots \CLL_{n-1})^2\leq
(\CMM_1^2 \cdot \CLL_1\cdot \CLL_2 \cdots \CLL_{n-1})(\CMM_2^2 \cdot \CLL_1\cdot \CLL_2\cdots \CLL_{n-1}).
$$

\item[(3)] Assume that $\CX$ is vertically factorial. 
Let $\overline \CM\in \wh\Pic(\CX)_{\vert}$ and $\CLL_1, \cdots, \CLL_{n-1}\in \wh\Pic(\CX)_{\nef}$. Assume that $\CL_i$ is ample on $\CX$ and that the Hermitian metric of $\CLL_i$ is strictly positive 
on the smooth locus of $\CX(\CC)$ for every $i$. Then
$$
\overline \CM^2\cdot \CLL_1\cdot \CLL_2 \cdots \CLL_{n-1}= 0\
\Longleftrightarrow\ \CMM\in \pi^* \wh\Pic(O_K).
$$

\end{itemize}

\end{prop}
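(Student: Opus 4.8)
The plan is to reduce all three parts to an analysis of the local contributions of a vertical class to the arithmetic intersection number. For part (1), observe first that the asserted inequality, nefness, and verticality are all stable under pull-back to a vertically factorial model dominating $\CX$ (for the intersection number, by the projection formula), so we may assume $\CX$ is vertically factorial. Choose a nonzero rational section $s$ of $\CM$; since $\CM_K$ is trivial, $\widehat\div(s)=(\sum_{\mathfrak p,i}a_{\mathfrak p,i}\CX_{\mathfrak p,i},\,g_s)$ has vertical divisorial part, and $g_s=-\log\|s\|^2$ is a smooth function on $\CX(\CC)$, where $s$ has neither zeros nor poles. Replacing one factor $\CMM$ by $\widehat\div(s)$ and expanding yields
$$\CMM^2\cdot\CLL_1\cdots\CLL_{n-1}=\sum_{\mathfrak p}(\log\#\kappa(\mathfrak p))\,Q_{\mathfrak p}\big((a_{\mathfrak p,i})_i\big)+\int_{\CX(\CC)}g_s\,c_1(\CMM)\wedge c_1(\CLL_1)\wedge\cdots\wedge c_1(\CLL_{n-1}),$$
where the sum is finite and $Q_{\mathfrak p}\big((a_i)_i\big)=\sum_{i,k}a_ia_k\,(\CX_{\mathfrak p,i}\cdot\CX_{\mathfrak p,k}\cdot\CL_1\cdots\CL_{n-1})_{\CX}$ is the geometric intersection form on the components of the fiber over $\mathfrak p$, twisted by the $\CL_j$.

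The archimedean term is nonpositive: since $c_1(\CMM)$ restricts to $dd^cg_s$ on $\CX(\CC)$, Stokes' theorem (the manifold being compact) rewrites it as $-\int_{\CX(\CC)}dg_s\wedge d^cg_s\wedge c_1(\CLL_1)\wedge\cdots\wedge c_1(\CLL_{n-1})$, which is $\le 0$ since $dg_s\wedge d^cg_s$ is a nonnegative $(1,1)$-form while each $c_1(\CLL_j)$ is semipositive. For the finite terms, the point is that each $Q_{\mathfrak p}$ is negative semidefinite, with kernel exactly $\RR\cdot(e_{\mathfrak p,i})_i$ — where $\CX_{\mathfrak p}=\sum_i e_{\mathfrak p,i}\CX_{\mathfrak p,i}$ — once the $\CL_j$ are in addition ample. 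This is the geometric heart of the argument, proved by Moriwaki's method: after perturbing the nef classes $\CL_j$ by a small positive multiple of a fixed ample line bundle on $\CX$ (a polynomial limiting argument) and slicing $\CX$ by generic members of $|m\CL_j|$, it reduces to an arithmetic surface $\CX'$ on which the $\CX_{\mathfrak p,i}$ restrict to fiber components $\Gamma_i$: then $m^{n-1}Q_{\mathfrak p}\big((a_i)_i\big)=\big(\sum_i a_i\Gamma_i\big)^2$, and since $\sum_i e_{\mathfrak p,i}\Gamma_i=\pi^*[\mathfrak p]$ restricts trivially to each $\Gamma_k$, the Zariski-lemma identity $\big(\sum_i a_i\Gamma_i\big)^2=-\sum_{i<k}\frac{\Gamma_i\cdot\Gamma_k}{e_{\mathfrak p,i}e_{\mathfrak p,k}}(a_ie_{\mathfrak p,k}-a_ke_{\mathfrak p,i})^2$, together with $\Gamma_i\cdot\Gamma_k\ge 0$ for $i\neq k$, gives negative semidefiniteness; the kernel is exactly $\RR\cdot(e_{\mathfrak p,i})_i$ as soon as the fiber is connected, which holds because $\CX\to\Spec O_K$ has geometrically connected fibers (preserved by generic slicing). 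Summing the finite and archimedean contributions proves the inequality in (1); the $n=1$ input used here is the classical Faltings--Hriljac Hodge index theorem on arithmetic surfaces.

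Part (2) is then formal: for $t\in\QQ$ the class $\CMM_1+t\CMM_2$ is vertical, so by (1) the quadratic polynomial $(\CMM_1+t\CMM_2)^2\cdot\CLL_1\cdots\CLL_{n-1}$ in $t$ is $\le 0$ for all $t$ (hence all real $t$, by continuity), so it has nonpositive discriminant, which is exactly the claimed Cauchy--Schwarz inequality; the degenerate cases, where the leading coefficient vanishes, follow directly from (1). For part (3), the implication $\CMM\in\pi^*\wh\Pic(O_K)\Rightarrow\CMM^2\cdot\CLL_1\cdots\CLL_{n-1}=0$ is immediate by dimension. Conversely, suppose the intersection number vanishes; being a sum of the nonpositive local terms above, every term then vanishes. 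Vanishing of the archimedean term together with the strict positivity of each $c_1(\CLL_j)$ on the dense, full-measure smooth locus of $\CX(\CC)$ forces $dg_s=0$ there, so $g_s$ is constant on each connected component of $\CX(\CC)$ — using that $X$ is normal and geometrically connected, so each such component has connected dense smooth locus — and hence the metric of $\CMM$ is pulled back from $\Spec O_K$. Vanishing of each $Q_{\mathfrak p}$, now that the $\CL_j$ are ample and $\CX$ is vertically factorial, puts $(a_{\mathfrak p,i})_i$ in the kernel $\RR\cdot(e_{\mathfrak p,i})_i$; thus $\widehat\div(s)$ is, over each $\mathfrak p$, a multiple of $\CX_{\mathfrak p}=\pi^*[\mathfrak p]$, so the divisorial part of $\CMM$ is pulled back from $\Spec O_K$ as well, and altogether $\CMM\cong\pi^*\CMM_0$.

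The main obstacle I expect is the claim that the fiberwise forms $Q_{\mathfrak p}$ are negative semidefinite with the stated kernel — the higher-dimensional incarnation of Zariski's lemma. The delicate technical points there are carrying out the generic-hyperplane-section reduction to arithmetic surfaces over the arithmetic base (including over the finite residue fields, where Bertini-type genericity needs care — this can be arranged, or circumvented by passing to an algebraic closure, over which intersection numbers are unchanged), and promoting connectedness of the special fiber to connectedness of the sliced configuration of curves so as to pin the kernel down exactly in the equality case. Everything else — the projection-formula reduction, the Stokes computation at the archimedean places, and the passage from (1) to (2) — is routine.
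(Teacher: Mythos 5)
Your proof has the same overall architecture as the paper's: split the arithmetic intersection number into archimedean and finite local contributions, show each is nonpositive, obtain~(2) from~(1) as a Cauchy--Schwarz/discriminant consequence, and for~(3) track the equality condition place by place. The archimedean Stokes computation is identical, and the derivation of~(2) and~(3) is essentially the same. The one genuine difference is how you establish negative semi-definiteness of the finite-place forms $Q_{\mathfrak p}$.

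You propose slicing $\CX$ down to an arithmetic surface by generic members of $|m\CL_j|$ (after perturbing the $\CLL_j$ to be ample) and then invoking the classical Zariski lemma there, i.e.\ Moriwaki's hyperplane-section reduction. The paper instead proves the semi-definiteness directly on $\CX$ by a completion-of-squares identity: writing $\CX_v = \sum_i a_i V_i$, $E_i = a_i V_i$, and $D_v = \sum_i b_i E_i$, one has
$$
D_v^2\cdot\CL_1\cdots\CL_{n-1}=-\tfrac12\sum_{i\ne j}(b_i-b_j)^2\,E_i\cdot E_j\cdot\CL_1\cdots\CL_{n-1}\le 0,
$$
using only that $\sum_j E_j=\CX_v$ is numerically trivial as a divisor class (so $E_i\cdot(\sum_j E_j)\cdot\CL_1\cdots\CL_{n-1}=0$) and that the cross terms $E_i\cdot E_j\cdot\CL_1\cdots\CL_{n-1}$ are $\ge 0$ for $i\ne j$ because the $\CLL_j$ are nef. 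This is precisely the $n$-dimensional incarnation of the Zariski-lemma identity you cite, obtained without any slicing. Consequently the paper never touches the Bertini-over-finite-residue-fields issues, the possible reducibility of sliced fiber components, or the ample-perturbation step that you (correctly) flag as the delicate points of your route; it also works for all nef $\CLL_j$ from the start, with ampleness only entering at the equality analysis in~(3). For that equality analysis the paper's form of the identity gives immediately that $b_i=b_j$ whenever $V_i\cap V_j\ne\emptyset$ (ampleness forcing those cross terms to be strictly positive), and connectedness of the special fiber then forces all $b_i$ equal, which is your kernel statement in different coordinates. So your approach is correct but takes a detour through a reduction that the paper avoids by a direct algebraic manipulation; what you would gain is a statement closer to the slicing framework already used in the non-vertical part of the theorem, but you would also inherit all the attendant technical overhead you enumerate.
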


\begin{proof}

The results are well-known for $n=1$ and more or less for general $n$.
The proof for general $n$ is not more difficult than the case $n=1$. We include
it here for convenience.

Note that (2) is the Cauchy--Schwartz inequality induced by (1).
In fact, let $x, y\in \BR$ be variables. The quadratic form
\begin{eqnarray*}
&&(x\CMM_1+y\CMM_2)^2\\
&=&  x^2\ \CMM_1^2\cdot \CLL_1 \cdots \CLL_{n-1}
+2 xy \ \CMM_1\cdot \CMM_2\cdot \CLL_1 \cdots \CLL_{n-1}
+y^2\  \CMM_2^2\cdot \CLL_1 \cdots \CLL_{n-1}
\end{eqnarray*}
is negative semi-definite.
It follows that the discriminant is negative or zero, which gives the
inequality.

To prove (1), we can assume that $\CX$ is also generically smooth and vertically factorial. In fact, let $\CX'$ be the successive blowing-up of the non-Cartier irreducible components of the special fiber of the generic desingularization of $\CX$ as above. Replace $\CX$ by $\CX'$ and all the Hermitian line bundles by the pull-backs on $\CX'$. 

Let $\overline D=(D,g)$ be a vertical arithmetic divisor representing $\CMM$. 
Here the Green's function $g$ is a continuous function on $\CX(\CC)$. 
By definition,
$$
\CM^2\cdot \CLL_1 \cdots \CLL_{n-1}=
\sum_{v\nmid \infty} D_v^2\cdot \CL_1 \cdots \CL_{n-1}
+\int_{\CX(\CC)}g \frac{\partial\overline\partial}{\pi i} g
\wedge c_1(\CLL_1)\cdots c_1(\CLL_{n-1}).
$$
Here $D_v$ denotes the part of $D$ above $v$. 
By integration by parts, the integral becomes
$$
-\frac{1}{\pi i} \int_{\CX(\CC)} \partial g\wedge\overline\partial  g
\wedge c_1(\CLL_1)\cdots c_1(\CLL_{n-1})\leq 0.
$$

To show $D_v^2\cdot \CL_1 \cdots \CL_{n-1}\leq 0$,
enumerate the irreducible components of the special fiber $\CX_v$ above $v$ by $V_1, \cdots, V_r$.
They are Cartier divisors by our assumption.
We have $\CX_v=\sum_{i=1}^r a_iV_i$ with multiplicity $a_i>0$.
For convenience, denote $E_i=a_iV_i$.
Write $D_v=\sum_{i=1}^r b_iE_i$ with some $b_i\in\BR$. We have
$$
D_v^2\cdot \CL_1 \cdots \CL_{n-1}
=\sum_{i,j=1}^r b_ib_jE_i\cdot E_j\cdot \CL_1 \cdots \CL_{n-1}.
$$
Note that
$$
\sum_{j=1}^r b_i^2\ E_i\cdot E_j\cdot \CL_1 \cdots \CL_{n-1}=0,
\quad \forall i.
$$
We obtain
\begin{eqnarray*}
D_v^2\cdot \CL_1 \cdots \CL_{n-1}
&=&- \frac 12 \sum_{i,j=1}^r (b_i-b_j)^2 E_i\cdot E_j\cdot \CL_1 \cdots
\CL_{n-1}\\
&=&- \frac 12 \sum_{i\neq j} (b_i-b_j)^2 E_i\cdot E_j\cdot \CL_1 \cdots
\CL_{n-1}\\
&\leq& 0.
\end{eqnarray*}
It finishes (1).

As for (3), we look at the conditions for the equality at every place.  
The integrals equal zero if and only if $g$ is constant on every connected component of $\CX(\CC)$. 
As for the intersection number above $v$, we have
$$
E_i\cdot E_j\cdot \CL_1 \cdots \CL_{n-1}>0, \qquad i\ne j
$$
as long as $V_i\cap V_j$ is nonempty.
For such $i,j$, we have $b_i=b_j$.
It gives the equality of all $b_i$ since the whole special fiber $\CX_v$ is connected.
\end{proof}

By taking limit, we have the following consequence. It gives the inequality of Theorem \ref{hodge} in the vertical case. 

\begin{prop} \label{hodge3}
Let $\pi:X\to \Spec K$ be a projective variety of dimension $n$.
Then the following are true:
\begin{itemize}
\item[(1)] If $\overline M\in \wh\Pic(X)_{\vert}$ and $\overline L_1, \cdots, \overline L_{n-1}\in \wh\Pic(X)_{\nef}$,
then
$$
\overline M^2\cdot \overline L_1\cdot \overline L_2 \cdots \overline L_{n-1}\leq 0.
$$

\item[(2)] If $\overline M_1,\overline M_2\in \wh\Pic(X)_{\vert}$ and $\overline\CL_1, \cdots, \overline\CL_{n-1}\in \wh\Pic(X)_{\nef}$, then
$$
(\overline M_1\cdot \overline M_2 \cdot \overline L_1\cdot \overline L_2 \cdots \overline L_{n-1})^2\leq
(\overline M_1^2 \cdot \overline L_1\cdot \overline L_2 \cdots \overline L_{n-1})(\overline M_2^2 \cdot \overline L_1\cdot \overline L_2\cdots \overline L_{n-1}).
$$
\end{itemize}

\end{prop}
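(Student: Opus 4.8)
The plan is to obtain Proposition~\ref{hodge3} as a limiting consequence of Proposition~\ref{hodge2}, using that every nef adelic line bundle is by definition a uniform limit of nef Hermitian line bundles on integral models of $X$, that an integrable vertical adelic line bundle is likewise a uniform limit of vertical Hermitian line bundles on integral models (cf.\ \cite{Zh2, Zh3}), and that the adelic arithmetic intersection pairing is by construction the limit of the arithmetic intersection pairings on integral models.

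For part~(1): since $\overline M$ is integrable with trivial underlying line bundle, choose vertical Hermitian line bundles $\overline\CM_k$ on integral models of $X$ whose induced metrics converge uniformly to that of $\overline M$, and for each $i$ choose nef Hermitian line bundles $\overline\CL_{i,k}$ on integral models whose metrics converge uniformly to that of $\overline L_i$. For a fixed $k$ only finitely many integral models occur among $\overline\CM_k,\overline\CL_{1,k},\dots,\overline\CL_{n-1,k}$, so there is an integral model $\CX_k$ dominating all of them; replace each bundle by its pullback to $\CX_k$. Pullback along a morphism of integral models is an isomorphism on the generic fibre, hence preserves verticality; it sends a semipositive metric to a semipositive metric, and, by the projection formula, sends the self-intersection of the restriction to any closed subvariety to a non-negative number, hence preserves nefness. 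Applying Proposition~\ref{hodge2}(1) on $\CX_k$ therefore gives
$$\overline\CM_k^2\cdot\overline\CL_{1,k}\cdots\overline\CL_{n-1,k}\ \leq\ 0$$
for every $k$. Letting $k\to\infty$, the left-hand side converges to $\overline M^2\cdot\overline L_1\cdots\overline L_{n-1}$ by continuity of the arithmetic intersection number under uniform convergence of metrics, so this number is $\leq 0$.

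Part~(2) is the Cauchy--Schwarz inequality induced by part~(1), exactly as in the proof of Proposition~\ref{hodge2}: for rational $x,y$ the combination $x\overline M_1+y\overline M_2$ is again vertical and integrable, so part~(1) shows that the quadratic form
\begin{align*}
(x\overline M_1+y\overline M_2)^2\cdot\overline L_1\cdots\overline L_{n-1}
&=x^2\,\overline M_1^2\cdot\overline L_1\cdots\overline L_{n-1}
+2xy\,\overline M_1\cdot\overline M_2\cdot\overline L_1\cdots\overline L_{n-1}\\
&\quad+y^2\,\overline M_2^2\cdot\overline L_1\cdots\overline L_{n-1}
\end{align*}
is $\leq 0$ for all $(x,y)\in\QQ^2$; by continuity and density of $\QQ^2$ in $\RR^2$ it is negative semi-definite, so its discriminant is $\leq 0$, which is precisely the asserted inequality.

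Thus the argument carries essentially no new content beyond Proposition~\ref{hodge2}. The one point requiring care is the bookkeeping in part~(1): forcing the various approximants $\overline\CM_k$ and $\overline\CL_{i,k}$ onto a common dominating integral model $\CX_k$ and checking that nefness and verticality survive the pullback, together with the appeal to the continuity of the arithmetic intersection pairing on integrable adelic line bundles, which is part of the foundational theory of adelic line bundles. I expect this continuity/approximation step to be the only genuine obstacle, and it is supplied by the cited theory.
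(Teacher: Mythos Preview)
Your proposal is correct and matches the paper's own argument: the paper proves Proposition~\ref{hodge3} in a single sentence (``By taking limit, we have the following consequence''), and what you wrote is precisely the unpacking of that sentence, including the passage to a common dominating integral model and the Cauchy--Schwarz step for part~(2) copied from Proposition~\ref{hodge2}.
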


\subsubsection*{Variational method}

The key to obtain the equality part of Theorem \ref{hodge} is the following result, which was inspired by \cite{Bl}. 

\begin{lem}\label{variation}
Let $\overline M, \overline L_1, \cdots, \overline L_{n-1}$ be integrable adelic line bundles on $X$ such that the following conditions hold:
\begin{enumerate}[(1)]
\item $M$ is trivial on $X$;
\item $\overline M$ is $\overline L_i$-bounded for every $i$;
\item $\overline M^2\cdot \overline L_1\cdots \overline L_{n-1}=0$.
\end{enumerate}
Then for any nef adelic line bundles $\overline L_i^0$ on $X$ with underlying bundle $L_i^0=L_i$, and any integrable adelic line bundle $\overline M'$ with trivial underlying line bundle $M'$, we have
$$ \overline M\cdot \overline M'\cdot \overline L_1^0\cdots \overline L_{n-1}^0=0.$$
\end{lem}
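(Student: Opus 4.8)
The plan is to reduce to the ``diagonal'' statement $\overline M^2\cdot\overline L_1^0\cdots\overline L_{n-1}^0=0$, establish a family of ``power vanishings'' for $\overline M$ by a Blocki-type variational argument, and then deform the $\overline L_i$ into the $\overline L_i^0$ one factor at a time using Cauchy--Schwarz. First, since $M$ and $M'$ are trivial, $\overline M$ and $\overline M'$ are vertical while each $\overline L_i^0$ is nef, so Proposition~\ref{hodge3}(2) gives
\[
\bigl(\overline M\cdot\overline M'\cdot\overline L_1^0\cdots\overline L_{n-1}^0\bigr)^2\le\bigl(\overline M^2\cdot\overline L_1^0\cdots\overline L_{n-1}^0\bigr)\bigl(\overline M'^2\cdot\overline L_1^0\cdots\overline L_{n-1}^0\bigr).
\]
Hence it is enough to prove $\overline M^2\cdot\overline L_1^0\cdots\overline L_{n-1}^0=0$; by Proposition~\ref{hodge3}(1) this number is already $\le 0$, so the point is to upgrade the inequality to an equality.

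The variational input is the assertion that $\overline M^{2+|S|}\cdot\prod_{i\notin S}\overline L_i=0$ for every $S\subseteq\{1,\dots,n-1\}$; call this $(\star)$. To prove it, pick $m_i>0$ with $m_i\overline L_i\pm\overline M$ nef (hypothesis~(2)); then $\overline L_i\pm\frac{1}{m_i}\overline M$ is nef, so for $\vec t$ in the box $\prod_i[-1/m_i,1/m_i]$ each $\overline L_i+t_i\overline M$ is nef and Proposition~\ref{hodge3}(1) shows that $f(\vec t):=\overline M^2\cdot\prod_i(\overline L_i+t_i\overline M)$ is $\le 0$ there. But $f$ is multi-affine, its value at the origin is $\overline M^2\cdot\prod_i\overline L_i=0$ by hypothesis~(3), and the coefficient of $\prod_{i\in S}t_i$ in $f$ is exactly the left-hand side of $(\star)$. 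A descending induction on $|S|$ — restricting $f$ to $\{t_i=0:i\notin S\}$, on which it equals $\bigl(\prod_{i\in S}t_i\bigr)$ times that coefficient and is still $\le 0$ near the origin, and then varying the signs of the $t_i$ — forces every such coefficient to vanish. This is the arithmetic analogue of Blocki's observation that a sum of nonnegative energies that vanishes has all of its terms zero.

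Finally I would deform. Put $\overline A_i^\pm:=m_i\overline L_i\pm\overline M$ (nef) and write $\overline L_i^0=\overline L_i+\overline N_i$ with $\overline N_i:=\overline L_i^0-\overline L_i$ vertical. By induction on the number of factors that have been replaced by their ``$0$''-versions (and, as an auxiliary statement, for all exponents $r$ simultaneously), one proves that $\overline M^{2+r}$ intersected with any $n-1-r$ of the bundles $\overline L_i$ or $\overline L_i^0$, with distinct indices, is zero; the base case is $(\star)$. In the inductive step one isolates one slot via $\overline L_i^0=\overline L_i+\overline N_i$, so it remains to kill a term of shape $\overline M^{2+r}\cdot\overline N_j\cdot(\text{product of nef bundles})$; replacing the surplus copies $\overline M^{1+r}$ by $2^{-(1+r)}\prod_\ell(\overline A_{j_\ell}^+-\overline A_{j_\ell}^-)$, where the indices $j_\ell$ are chosen disjoint from those already appearing in the nef part (there are exactly $1+r$ such indices available), and applying Proposition~\ref{hodge3}(2) with the resulting \emph{nef} polarization, this term is bounded by $\overline M^2$ against that polarization; expanding the $\overline A_{j_\ell}^\pm$ back into $\overline L_{j_\ell}$ and $\overline M$ and invoking the induction hypothesis (strictly fewer deformed factors) together with $(\star)$ gives $0$. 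Taking $r=0$ and all $n-1$ factors deformed yields $\overline M^2\cdot\overline L_1^0\cdots\overline L_{n-1}^0=0$, which with the first paragraph finishes the proof; and by Cauchy--Schwarz the same argument handles the general $\overline M'$.

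The step I expect to be the main obstacle is the last one. The deformation $\overline L_i\rightsquigarrow\overline L_i^0$ is irreversible within the nef cone, so there is no naive ``derivative at the endpoint'' argument; the equality must be squeezed out entirely from the vanishings $(\star)$ through Cauchy--Schwarz. Two points make this delicate: the surplus powers of $\overline M$ must be rewritten in terms of \emph{nef} bundles before Proposition~\ref{hodge3} can be applied, and this rewriting must use \emph{fresh} indices, since intersection numbers such as $\overline M^2\cdot\overline L_i^2\cdot(\cdots)$ with a repeated factor are not controlled by the hypotheses — which is exactly why the bookkeeping has to be organized by the number of deformed factors. The only substantive inputs, however, are $(\star)$ and Proposition~\ref{hodge3}; the rest is combinatorial.
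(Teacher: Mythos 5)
Your proposal is correct and rests on the same two pillars as the paper's argument: the family of ``power vanishings'' obtained by a Blocki-type perturbation, and the Cauchy--Schwarz inequality of Proposition~\ref{hodge3}(2) applied to vertical classes. The paper also first establishes vanishings with the deformed inputs $\overline L_i^{\pm}=\overline L_i\pm\overline M$, then replaces the $\overline L_i$ by the $\overline L_i^0$ one slot at a time, and finishes with one more Cauchy--Schwarz against $\overline M'$.

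Where the two proofs diverge is in the bookkeeping of the inductive deformation. The paper inducts on \emph{which} slots are deformed: the intermediate statement is $\overline M^2\cdot\overline L_1^0\cdots\overline L_{k-1}^0\cdot\overline L_k^{\epsilon(k)}\cdots\overline L_{n-1}^{\epsilon(n-1)}=0$ for all sign functions $\epsilon$, and the step from $k$ to $k+1$ consists of applying Cauchy--Schwarz to $\overline M$ and $\overline M_k^{\epsilon(k)}=\overline L_k^{\epsilon(k)}-\overline L_k^0$, then varying the sign in the $k$-th polarization slot and subtracting, so that $\overline L_k^+-\overline L_k^-=2\overline M$ produces the needed $\overline M^2$ directly --- no extra powers of $\overline M$ ever accumulate. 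You instead induct on \emph{how many} slots are deformed while simultaneously carrying a power $\overline M^{2+r}$, which then has to be traded back into a nef polarization via $\overline M=\tfrac12(\overline A_{j}^+-\overline A_{j}^-)$ using $1+r$ fresh indices before Cauchy--Schwarz can be invoked. You correctly identify that the fresh-index count is exactly $1+r$ and that this is the crux of the bookkeeping; but the paper's choice of induction parameter sidesteps the issue entirely by never letting surplus powers of $\overline M$ pile up. Both are valid; the paper's is shorter.

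One small error in your write-up: the induction proving $(\star)$ must be \emph{ascending} on $|S|$, not descending. Restricting the multi-affine function $f$ to $\{t_i=0:i\notin S\}$ yields $\sum_{S'\subseteq S}c_{S'}\prod_{i\in S'}t_i$, which equals $c_S\prod_{i\in S}t_i$ only once the lower-order coefficients $c_{S'}$, $|S'|<|S|$, have already been shown to vanish. Starting from $|S|=1$ (where sign variation kills $c_{\{i\}}$) and working up fixes this. This is cosmetic and does not affect the rest of the argument.
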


\begin{proof} 
Note that (2) implies every $\overline L_i$ is nef. 
Replacing with $\overline L_i$ by a large multiple if necessary, we can assume that both $\overline L_i\pm \overline M$ are nef for each $i$. Denote $\overline L_{i}^\pm =\overline L_i\pm \overline M$ in the following.

First, we have the equality
$$\overline M^2\cdot \overline L_1^{\epsilon(1)}\cdots\overline L_{n-1}^{\epsilon (n-1)}=0$$
for any sign function $\epsilon: \{1, \cdots, n-1\}\to \{+, -\}$. 
In fact, we can find a constant $t>0$ such that $L_i-t L_i^{\epsilon(i)}$ is nef for any $i$. Then Proposition \ref{hodge3} gives
$$
0=\overline M^2\cdot \overline L_1\cdots \overline L_{n-1} \leq 
\overline M^2\cdot t\overline L_1^{\epsilon(1)}\cdots t\overline L_{n-1}^{\epsilon (n-1)} \leq 0.
$$ 
It forces
$$\overline M^2\cdot \overline L_1^{\epsilon(1)}\cdots\overline L_{n-1}^{\epsilon (n-1)}=0.$$

Second, we claim that
$$
\overline M^2\cdot \overline L_1^0\cdots \overline L_{k-1}^0\cdot \overline L_{k}^{\epsilon (k)}\cdots\overline L_{n-1}^{\epsilon (n-1)}=0, \quad
\forall\ k=1,\cdots,n.
$$
When $k=n$, then we can apply Proposition \ref{hodge3} (2) to conclude the proof.

Prove the claim by induction. 
We already have the case $k=1$. Assume the equality for a general $k$ for all sign functions $\epsilon$. 
Apply Proposition \ref{hodge3} (2) to the vertical adelic line bundles $\overline M$ and $\overline M_{k}^{\epsilon (k)}=\overline L_k^{\epsilon (k)}-\overline L_k^0$, we have 
$$
\overline M\cdot \overline M_{k}^{\epsilon (k)} \cdot \overline L_1^0\cdots \overline L_{k-1}^0\cdot \overline L_{k}^{\epsilon (k)}\cdots\overline L_{n-1}^{\epsilon (n-1)}=0.
$$
Replace $\overline L_{k}^{\epsilon (k)}$ by $\overline L_{k}^+$ and $\overline L_{k}^-$, and take the difference. We have
$$
\overline M\cdot \overline M_{k}^{\epsilon (k)} \cdot \overline L_1^0\cdots \overline L_{k-1}^0\cdot \overline M\cdot \overline L_{k+1}^{\epsilon (k+1)} \cdots\overline L_{n-1}^{\epsilon (n-1)}=0.
$$
It is just
$$
\overline M^2 \cdot \overline L_1^0\cdots \overline L_{k-1}^0\cdot (\overline L_k^{\epsilon (k)}-\overline L_k^0)\cdot \overline L_{k+1}^{\epsilon (k+1)} \cdots\overline L_{n-1}^{\epsilon (n-1)}=0.
$$
The left-hand side splits to a difference of two terms. One term is zero by the induction assumption for $k$. It follows that the other term is also zero, which gives $$
\overline M^2 \cdot \overline L_1^0\cdots \overline L_{k-1}^0\cdot \overline L_k^0\cdot \overline L_{k+1}^{\epsilon (k+1)} \cdots\overline L_{n-1}^{\epsilon (n-1)}=0.
$$
It is exactly the case $k+1$. 
The proof is complete.
\end{proof}

\subsubsection*{Condition of equality}

To reduce adelic metrics to model metrics, we first introduce the ``push-forward'' of adelic line bundle to integral models. 

Let $\overline M$ be an integrable adelic line bundle on $\CX$ with $M=\CO_X$ trivial. For any place $v$ of $K$, the function $\log\|1\|_v$ on $X(\overline K_v)$ coming from the metric of $\overline M$ extends to a continuous function on the analytic space $X_{K_v}^\an$. By definition, $\log\|1\|_v=0$ for all but finitely many $v$.

Let $\CX$ be a vertically factorial integral model of $X$ over $O_K$.  
Define a vertical arithmetic $\RR$-divisor $\overline D_{\CX}$ on $\CX$ by
$$
\overline D_{\CX}:= (-\sum_{(V,v)} \log\|1\|_v(\eta_V), \ -\log\|1\|_\infty).
$$
Here the summation is over all pairs $(V,v)$, where $v$ is a non-archimedean place of $K$, and $V$ is an irreducible component of the fiber $\CX_v$ of $\CX$ above $v$. 
The point $\eta_V\in X_{K_v}^\an$ denotes the Shilov point corresponding to $V$, which is the unique preimage of the generic point of $V$ under the reduction map $X_{K_v}^\an\to \CX_v$. 

To get a line bundle, we denote by $\overline M_\CX\in \wh\Pic(\CX)_\RR$ the Hermitian line bundle on $\CX$ associated to $\overline D_\CX$. 
We have the following basic result. 

\begin{lem} \label{pushforward}
\begin{enumerate}[(1)]
\item For any integrable Hermitian line bundles $\CLL_1, \cdots, \CLL_{n}$ on $\CX$, we have
$$\overline M\cdot \CLL_1\cdots \CLL_{n}=\overline M_\CX\cdot \CLL_1\cdots \CLL_{n}.$$
\item There is a sequence $\{\CX_m\}_m$ of vertically factorial integral model of $X$ over $O_K$ such that the adelic metric induced by the push-forward 
$\overline M_{\CX_m}$ converges uniformly to the adelic metric of $\overline M$. 
\end{enumerate}
\end{lem}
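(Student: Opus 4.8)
The plan is to obtain (1) from the standard local decomposition of adelic intersection numbers together with the atomicity of products of model metrics, and to obtain (2) from the fact that a model function on $X_{K_v}^{\an}$ is controlled by its values at the finitely many Shilov points of the model.

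For (1), I would regard both sides as adelic intersection numbers on $X$ — the right-hand side makes sense this way because $\overline M_\CX$ induces a model adelic metric on the trivial bundle $\CO_X$ — so that it suffices to prove $(\overline M-\overline M_\CX)\cdot\CLL_1\cdots\CLL_n=0$. The adelic line bundle $\overline M-\overline M_\CX$ has trivial underlying bundle; by the definition of $\overline D_\CX$ its metric is trivial at every archimedean place, and at each finite place $v$ its local metric — a difference of two $\CX$-model functions — vanishes at every Shilov point $\eta_V$ of $\CX_v$. Since the section $1$ has empty divisor, the adelic intersection number is the sum over all places $v$ of the local terms obtained by integrating $-\log\|1\|_{\overline M-\overline M_\CX,v}$ against $c_1(\CLL_1)_v\cdots c_1(\CLL_n)_v$ (Chern forms at the archimedean places). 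The archimedean integrands are zero; at a finite place $v$, since all $\CLL_i$ come from the single model $\CX$, Chambert--Loir's theorem \cite{Ch} identifies $c_1(\CLL_1)_v\cdots c_1(\CLL_n)_v$ with a finite combination of Dirac masses at the Shilov points, where the integrand also vanishes. Summing gives (1); one reduces to $\CLL_i$ nef and works with $\RR$-coefficients throughout so that all the measures and intersection numbers are legitimate.

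For (2), I would use that $\overline M$ is integrable: writing $\overline M=\overline A-\overline B$ with $\overline A,\overline B$ nef, the metrics of $\overline A$ and $\overline B$ are uniform limits of metrics of nef Hermitian line bundles on integral models, so passing to common dominating models and subtracting yields integral models $\CY_m$ of $X$ over $O_K$ and vertical Hermitian $\RR$-line bundles $\overline E_m$ on $\CY_m$ with generic fibre $\CO_X$ whose induced adelic metrics converge uniformly to that of $\overline M$, say with error $\epsilon_m\to 0$ (only finitely many $v$ matter, and $\overline E_m$ may be chosen to induce the metric of $\overline M$ at the remaining places). Blowing up the non-Cartier components of the special fibres only refines $\CY_m$ and leaves the induced adelic metrics unchanged, so we may assume each $\CY_m$ is vertically factorial and form the push-forward $\overline M_{\CY_m}$. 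By construction, the adelic metric of $\overline M_{\CY_m}$ agrees with that of $\overline M$ at the archimedean places and at every Shilov point of each $\CY_{m,v}$; hence at each finite $v$ the two $\CY_m$-model functions $\log\|1\|_{\overline M_{\CY_m},v}$ and $\log\|1\|_{\overline E_m,v}$ differ by at most $\epsilon_m$ at every Shilov point. Their difference is again a $\CY_m$-model function, and such a function attains its supremum and infimum at the Shilov points of $\CY_m$: it factors through the retraction to the skeleton of $\CY_m$, on which it is affine on each cell, so it is governed by its values at the vertices, which are exactly the Shilov points. Thus the two model functions differ by at most $\epsilon_m$ everywhere on $X_{K_v}^{\an}$, and the triangle inequality gives that the adelic metric of $\overline M_{\CY_m}$ is within $2\epsilon_m$ of that of $\overline M$ at every place; letting $m\to\infty$ proves (2).

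The routine ingredients are the local decomposition of adelic intersection numbers and the atomicity of products of model metrics. The main obstacle — and the only step where the particular shape of $\overline M_\CX$ is used — is the comparison in (2): one needs both that $\overline M_{\CY_m}$ interpolates the metric of $\overline M$ at the Shilov points and that a model function is controlled in sup-norm by its values there, since together these say that the canonical push-forward approximates $\overline M$ at least as well as the arbitrary model metric $\overline E_m$ does.
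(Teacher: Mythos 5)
Your proof is correct, but for part (1) it takes a genuinely different route from the paper. The paper first chooses a sequence of vertically factorial models $\CX_m$ dominating $\CX$ that approximate $\overline M$, pushes forward the Hermitian metrics by the birational morphisms $\alpha_m : \CX_m \to \CX$, and then obtains (1) as the limit of the classical projection formula $\overline M_m \cdot \CLL_1\cdots\CLL_n = (\alpha_m)_*\CMM_m \cdot \CLL_1\cdots\CLL_n$, using the fact that $(\alpha_m)_*\CMM_m \to \overline M_\CX$. Your argument instead works directly with the adelic intersection number and exploits the atomicity of the Chambert--Loir measure for a tuple of Hermitian line bundles all coming from the single model $\CX$: the measure is supported at the Shilov points $\eta_V$, where the metric of $\overline M - \overline M_\CX$ vanishes by the very definition of $\overline D_\CX$, so the local integral is zero place by place. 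This is cleaner in that it avoids choosing a sequence of dominating models, but it requires the Chambert--Loir machinery as an input, whereas the paper's approach stays entirely within classical arithmetic intersection theory on models. For part (2) your proof is essentially the paper's: the paper compares $\overline M_{\CX_m}$ with the model line bundle $\CMM_m$ and observes that the coefficients of the difference divisor tend to $0$; you compare the same two model functions at Shilov points and invoke the fact that a model function is controlled in sup-norm by its Shilov values. The two formulations are the same fact. One small caution: for (2) you justify that control via "affine on each cell of the skeleton," which is the simple-normal-crossings picture; a vertically factorial model need not be SNC, so it is safer to appeal to the maximum modulus principle on the Shilov boundary of the formal model (or to the convexity/interpolation property of model functions) rather than to a cell decomposition. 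With that adjustment, your (2) matches the paper's argument.
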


\begin{proof} 
By definition, there is a sequence of vertically factorial integral models 
$(\CX_m, \CMM_m)$ of $(X,M)$ over $O_K$, which induces a sequence of adelic line bundles $\overline M_m$ convergent to $\overline M$. 
Here by abuse of terminology, convergence means uniform convergence of induced adelic metrics. 

For (2), we prove that $\overline M_{\CX_m}$ converges to $\overline M$. It suffices to show that $\overline N_m=\overline M_{\CX_m}-\overline M_m$ converges to the trivial adelic line bundle. Let $\overline E=(E, g_E)$ be the vertical arithmetic divisor associated to the section 1 of $\overline N_m$. By definition, the function $g_E$ on $X(\CC)$ and the multiplicities of irreducible components of special fibers of 
$\CX$ in $E$ converges uniformly to 0. It follows that $\overline N_m$ 
converges to the trivial adelic line bundle. 

For (1), we can further assume that each $\CX_m$ dominates $\CX$ by a birational morphism $\alpha_m:\CX_m\to \CX$. 
By definition, $\overline M_{m,\CX}=(\alpha_m)_* \CMM_m$ converges to $\overline M_{\CX}$, which can be understood as the convergence of the corresponding vertical divisors.
Then (1) is just the limit of the projection formula
$$
\overline M_m\cdot \CLL_1\cdots \CLL_{n}=\overline M_{m,\CX}\cdot \CLL_1\cdots \CLL_{n}.
$$
\end{proof}

Now we are ready to prove the equality part of Theorem \ref{hodge} in the vertical case.
Let $\overline M$ and $\overline L_1,\cdots \overline L_{n-1}$ be as in the equality part of theorem. Assume that $M$ is trivial. 

Let $\CX$ be a vertically factorial integral model of $X$. 
For each $m$, let $\CLL_1, \cdots, \CLL_{n-1}\in \wh\Pic(\CX)_{\nef}$
such that $\CL_i$ is ample on $\CX$ and that the Hermitian metric of $\CLL_i$ is strictly positive 
on the smooth locus of $\CX(\CC)$
for every $i$. By Lemma \ref{variation}, 
$$
\overline M \cdot \overline M_\CX\cdot \CLL_1\cdots \CLL_{n-1}=0.
$$
By Lemma \ref{pushforward} (1), it becomes
$$
\overline M_\CX\cdot \overline M_\CX\cdot \CLL_1\cdots \CLL_{n-1}=0.
$$
By Proposition \ref{hodge2} (3), $\overline M_\CX\in \pi^* \wh\Pic(O_K)$ where $\pi:\CX\to \Spec O_K$ denotes the structure morphism by abuse of notation. 

By Lemma \ref{pushforward} (2), the metric of $\overline M$ can be approximated uniformly by the metric induced by $\overline M_\CX$ when varying $\CX$. It follows that the limit $\overline M$ actually lies in $\pi^* \wh\Pic(K)$. 
It finishes the proof of Theorem \ref{hodge}.

\subsection{Case of curves}

When $\dim X=1$, Theorem \ref{hodge} can be easily reduced to the result of Faltings \cite{Fal} and Hriljac \cite{Hr} on integral models.
Note that $\overline L_i$ does not show up in the theorem. 

We can assume that $X$ is smooth by a desingularization. 
Take any integral model $\CX$ of $X$ over $O_K$.
Then we can find some $\CMM\in \wh\Pic(\CX)_\QQ$ extending $M$ whose intersection with any vertical arithmetic divisor on $\CX$ is 0.
Denote by $\overline M_0\in \wh\Pic(X)$ the adelic line bundle induced by 
$\CMM$.
Define a vertical adelic line bundle $\overline N$ by
$$\overline M=\overline M_0+\overline N. $$
By definition, $\overline M_0\cdot\overline N=0$ since $\overline M_0$ is perpendicular to all vertical classes.
It follows that 
$$\overline M^2=\overline M_0^2+\overline N^2\leq 0. $$
Here $\overline M_0^2= -2\ \wh h(M)\leq 0$ by the positivity of the Neron--Tate height, and $\overline N^2\leq 0$ follows from the vertical case we treated before. 
The equality is attained if and only if $\overline M$ is torsion in 
$\wh\Pic(X)/\pi^*\wh\Pic(K)$ and
and $\overline N\in \pi^*\wh\Pic(K)$. The result is proved.

\subsection{Inequality in the general case}

Now we prove the inequality of Theorem \ref{hodge} in the general case by induction on $n=\dim X$. We have already treated the case $n=1$, so we assume $n\geq 2$ in the following.

\subsubsection*{Reducing to the model case}

Consider the inequality part of the theorem. 
Recall $\overline M\in \wh\Pic(X)_\intb$ and $\overline L_1, \cdots, \overline L_{n-1}\in \wh\Pic(X)_\nef$. The theorem assumes $M\cdot L_1\cdots L_{n-1}=0$ and that each $L_i$ is big on $X$. We are going to prove 
$$\overline M^2\cdot \overline L_1\cdots \overline L_{n-1} \le 0.$$
By a resolution of singularities, we can assume that $X$ is smooth. 
Furthermore, we claim that we can further assume that each $\overline L_i$ is ample.

In fact, fix an ample adelic line bundle $\overline A$ on $X$. 
Take a small rational number $\epsilon>0$. 
Set $\overline L_i'= \overline L_i+\epsilon \overline A$
and $\overline M'=\overline M+\delta \overline A$. 
Here $\delta$ is a number such that 
$$
M'\cdot L_1'\cdots L_{n-1}'=(M+\delta A)\cdot L_1'\cdots L_{n-1}'=0.
$$
It determines
$$\delta= -\frac{M\cdot L_1'\cdots L_{n-1}'}{A\cdot L_1'\cdots L_{n-1}'}.$$
As $\epsilon\to 0$, we have $\delta\to 0$ since
$$M\cdot L_1'\cdots L_{n-1}'\to M\cdot L_1\cdots L_{n-1}=0,$$
$$A\cdot L_1'\cdots L_{n-1}'\to A\cdot L_1\cdots L_{n-1}>0.$$
The last inequality uses the assumption that $L_i$ is 
big and nef for each $i$.
Therefore, the inequality $\overline M^2\cdot\overline L_1\cdots \overline L_{n-1}\leq 0$
is the limit of the inequality $\overline M'^2\cdot\overline L_1'\cdots \overline L_{n-1}'\leq 0$. Here every $\overline L_i'$ is ample. So the claim is achieved.

Go back to the inequality 
$$\overline M^2\cdot \overline L_1\cdots \overline L_{n-1} \le 0.$$
We have the condition
$M\cdot L_1\cdots L_{n-1}=0$.
We further assume that $X$ is smooth, and $\overline L_i$ is ample for $i=1,\cdots, n-1$.
By approximation, it suffices to prove
$$
\CMM^2\cdot\CLL_1\cdots \CLL_{n-1}\leq 0
$$
under the following assumptions:
\begin{itemize}
\item $\CX$ is a normal integral model of $X$ over $O_K$;
\item $\CMM, \CLL_1, \cdots, \CLL_{n-1}$ are Hermitian line bundles on 
$\CX$ with smooth Hermitian metrics and with generic fiber $M, L_1,\cdots, L_{n-1}$;
\item $\CLL_i$ ample on $\CX$ with strictly positive metric on $\CX(\CC)$ for each $i=1,\cdots, n-1$.
\end{itemize}
It was proved by Moriwaki \cite{Mo1} in the case that all $\CLL_i$ are equal. 
The current case is similar, but we still sketch it in the following.

\subsubsection*{The model case} 
Here we prove the inequality in the above model case. 

First, by $M\cdot L_1\cdots L_{n-1}=0$, there is a metric 
$\|\cdot\|_0$ on $\CM$, unique up to scalars, such that the Chern form of $\CMM'=(\CM,\|\cdot\|_0)$ gives
$$c_1(\CMM') c_1(\CLL_{1}) \cdots c_1(\CLL_{n-1})
=0 $$
pointwise on $X(\CC)$. 
It is given by an elliptic equation. 
We refer to \cite[Corollary 2.2 A2]{Gr} for a proof. 
Write the original metric of $\CMM$ as
$e^{-\phi}\|\cdot\|_0$ for a real-valued smooth function 
$\phi$ on $X(\CC)$.
Then we have 
$$
\CMM^2\cdot\CLL_1\cdots \CLL_{n-1}
=\CMM'^2\cdot\CLL_1\cdots \CLL_{n-1}
-\int_{X(\CC)}\phi \frac{1}{\pi i}\partial\overline\partial \phi 
\wedge  c_1(\CLL_{1}) \cdots c_1(\CLL_{n-1}).
$$
By integration by parts, the second term on the right 
$$
-\int_{X(\CC)}\phi \frac{1}{\pi i}\partial\overline\partial \phi 
\wedge  c_1(\CLL_{1}) \cdots c_1(\CLL_{n-1})
= \int_{X(\CC)} \frac{1}{\pi i}\partial\phi\wedge \overline\partial \phi 
\wedge  c_1(\CLL_{1}) \cdots c_1(\CLL_{n-1}).
$$
It is non-positive since the volume from
$$
\frac{1}{\pi i}\partial\phi\wedge \overline\partial \phi 
\wedge  c_1(\CLL_{1}) \cdots c_1(\CLL_{n-1})\leq 0.
$$
Hence, it suffices to prove
$$
\CMM'^2\cdot\CLL_1\cdots \CLL_{n-1}\leq 0.
$$

By Moriwaki's arithmetic Bertini theorem in \cite{Mo1}, replacing $\CLL_{n-1}$ by a tensor power if necessary, there is a nonzero section $s\in H^0(\CX,\CL_{n-1})$ satisfying the following conditions:
\begin{itemize}
\item The supremum norm $\|s\|_{\sup}=\sup_{x\in X(\CC)} \|s(x)\|<1$;
\item The horizontal part of $\div(s)$ on $\CX$ is a generically smooth arithmetic variety $\CY$;
\item The vertical part of $\div(s)$ on $\CX$ is a linear combination $\sum_{\wp}a_\wp \CX_\wp$ of smooth fibers $\CX_\wp$ of $\CX$ above (good) prime ideals $\wp$ of $O_K$.
\end{itemize}
Then by the intersection formula,
\begin{eqnarray*}
&&\CMM^2\cdot\CLL_1\cdots \CLL_{n-2}\cdot \CLL_{n-1} \\
&=&\CMM^2\cdot\CLL_1\cdots \CLL_{n-2}\cdot \CY
+\sum_{\wp} a_\wp\ \CMM^2\cdot\CLL_1\cdots \CLL_{n-2}\cdot \CX_\wp  \\
&& -\int_{X(\CC)} \log\| s\| c_1(\CMM)^2 c_1(\CLL_{1}) \cdots c_1(\CLL_{n-2}).
\end{eqnarray*}

It suffices to prove each term on the right-hand side is non-positive. 
The ``main term'' 
$$\CMM^2\cdot\CLL_1\cdots \CLL_{n-2}\cdot \CY\leq 0$$
by induction hypothesis.
The vertical part
$$
\CMM^2\cdot\CLL_1\cdots \CLL_{n-2}\cdot \CX_\wp
=M^2\cdot L_1\cdots L_{n-2}\log N_\wp \leq 0
$$
follows from the geometric Hodge index theorem on the algebraic variety
$X$ (or $\CX_\wp$). 
It remains to check 
$$
-\int_{X(\CC)} \log\| s\| c_1(\CMM)^2 c_1(\CLL_{1}) \cdots c_1(\CLL_{n-2})
\leq 0.
$$
In fact, we have a pointwise inequality
$$c_1(\CMM)^2 c_1(\CLL_{1}) \cdots c_1(\CLL_{n-2})
\leq 0.$$
See \cite[Lemma 2.1A]{Gr}.
It is called Aleksandrov's lemma, since it was essentially due to Aleksandrov \cite{Al} assuming the existence of the metric $\|\cdot\|_0$.

\subsection{Equality in the general case}

Now we prove the second part of Theorem \ref{hodge} in the general case. We have already treated the case $n=1$, so we assume $n\geq 2$ in the following.  

\subsubsection*{Argument on the generic fiber}

Assume the conditions in the equality part of the Theorem \ref{hodge}, which particularly includes
$$\overline M^2\cdot \overline L_1\cdots \overline L_{n-1} = 0.$$
We first show that $M$ is numerically trivial on $X$ by the condition that $\overline L_{n-1}$ is ample. 

By Lemma \ref{ample}, $\overline L_{n-1} '=\overline L_{n-1} - \pi^*\overline N$ is nef for some $\overline N\in \wh\Pic(K)$ with $c=\wh\deg(\overline N)>0$.
Then 
$$\overline M^2\cdot \overline L_1 \cdots \overline L_{n-2}\cdot \overline L_{n-1}
=\overline M^2\cdot \overline L_1 \cdots \overline L_{n-2}\cdot \overline L_{n-1}'
+c \ M^2\cdot  L_1 \cdots  L_{n-2}.$$
Applying the inequality of the theorem to $(\overline M, \overline L_1, \cdots, \overline L_{n-2}, \overline L_{n-1}')$, we have
$$\overline M^2\cdot \overline L_1 \cdots \overline L_{n-2}\cdot \overline L_{n-1}' \leq 0.$$
By the Hodge index theorem on $X$ in the geometric case, we have
$$M^2\cdot  L_1 \cdots  L_{n-2}\leq 0.$$
Hence,
$$\overline M^2\cdot \overline L_1 \cdots \overline L_{n-2}\cdot \overline L_{n-1}'
=\ M^2\cdot  L_1 \cdots  L_{n-2}
=0.$$

On the variety $X$, we have
$$
M\cdot  L_1 \cdots  L_{n-2}\cdot L_{n-1}=0,
\quad M^2\cdot  L_1 \cdots  L_{n-2}=0.
$$
By the Hodge index theorem on algebraic varieties, we conclude that $M$
is numerically trivial. 
See Theorem \ref{lefschetz2} in the appendix.

\subsubsection*{Numerically trivial case}

We have proved that $M$ is numerically trivial on $X$. 
Here we continue to prove that $M$ is a torsion line bundle. 
Then a multiple of $\overline M$ lies in the vertical case, which has already been treated. 

As in the vertical case, the key is still the variational method. 

\begin{lem} \label{variation2}
Let $\overline M, \overline L_1, \cdots, \overline L_{n-1}$ be integrable adelic line bundles on $X$ such that the following conditions hold:
\begin{enumerate}[(1)]
\item $M$ is numeriacally trivial on $X$;
\item $\overline M$ is $\overline L_i$-bounded for every $i$;
\item $\overline M^2\cdot \overline L_1\cdots \overline L_{n-1}=0$.
\end{enumerate}
For any nef adelic line bundles $\overline L_i^0$ on $X$ with underlying bundle $L_i^0$ numerically equivalent to $L_i$, and any integrable adelic line bundle $\overline M'$ with numerically trivial underlying line bundle $M'$, the following are true:
$$ \overline M\cdot \overline M'\cdot \overline L_1^0\cdots \overline L_{n-1}^0=0,$$
$$\overline M^2\cdot \overline M'\cdot \overline L_1^0\cdots \overline L_{n-2}^0 = 0.$$
\end{lem}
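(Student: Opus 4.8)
The plan is to adapt the inductive scheme of Lemma \ref{variation} to the numerically trivial case, replacing ``vertical'' by ``numerically trivial'' throughout and using the general-case inequality of Theorem \ref{hodge} (already proved) in place of Proposition \ref{hodge3}. First I would observe, exactly as in Lemma \ref{variation}, that (2) forces every $\overline L_i$ to be nef, and that after replacing each $\overline L_i$ by a large multiple we may assume $\overline L_i^\pm:=\overline L_i\pm\overline M$ are both nef; note that since $M$ is numerically trivial, $L_i^\pm$ is numerically equivalent to $L_i$, hence big and nef, so the inequality of Theorem \ref{hodge} applies to any mixed product of the $\overline L_i^{\epsilon(i)}$. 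Then, just as before, for any sign function $\epsilon$ one finds $t>0$ with $L_i-tL_i^{\epsilon(i)}$ nef (using numerical equivalence and a fixed ample class), and the sandwich
$$0=\overline M^2\cdot\overline L_1\cdots\overline L_{n-1}\le \overline M^2\cdot t\overline L_1^{\epsilon(1)}\cdots t\overline L_{n-1}^{\epsilon(n-1)}\le 0$$
forces $\overline M^2\cdot\overline L_1^{\epsilon(1)}\cdots\overline L_{n-1}^{\epsilon(n-1)}=0$ for all $\epsilon$.

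Next I would run the same induction on $k$ as in Lemma \ref{variation}, proving
$$\overline M^2\cdot\overline L_1^0\cdots\overline L_{k-1}^0\cdot\overline L_k^{\epsilon(k)}\cdots\overline L_{n-1}^{\epsilon(n-1)}=0,\quad k=1,\dots,n.$$
The base case $k=1$ is the previous paragraph. For the inductive step, I apply the Cauchy--Schwarz inequality of Theorem \ref{hodge} (its mixed bilinear form, obtained from the inequality just as Proposition \ref{hodge3}(2) follows from (1)) to the two adelic line bundles $\overline M$ and $\overline M_k^{\epsilon(k)}:=\overline L_k^{\epsilon(k)}-\overline L_k^0$, both of which have numerically trivial underlying bundle, against the nef tuple $\overline L_1^0,\dots,\overline L_{k-1}^0,\overline L_k^{\epsilon(k)},\overline L_{k+1}^{\epsilon(k+1)},\dots,\overline L_{n-1}^{\epsilon(n-1)}$; the vanishing of $\overline M^2\cdot(\text{tuple})$ from the induction hypothesis forces the mixed term $\overline M\cdot\overline M_k^{\epsilon(k)}\cdot(\text{tuple})$ to vanish. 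Taking the difference over $\epsilon(k)=+$ and $\epsilon(k)=-$ converts one factor $\overline M_k^{\epsilon(k)}$ into $2\overline M$, yielding $\overline M^2\cdot\overline L_1^0\cdots\overline L_{k-1}^0\cdot(\overline L_k^{\epsilon(k)}-\overline L_k^0)\cdot\overline L_{k+1}^{\epsilon(k+1)}\cdots=0$, and since the $\overline L_k^{\epsilon(k)}$-term is zero by the induction hypothesis, the $\overline L_k^0$-term is too. At $k=n$ this gives $\overline M^2\cdot\overline L_1^0\cdots\overline L_{n-1}^0=0$, and then one more application of Cauchy--Schwarz against $\overline M$ and $\overline M'$ with the nef tuple $\overline L_1^0,\dots,\overline L_{n-1}^0$ yields the first claimed identity $\overline M\cdot\overline M'\cdot\overline L_1^0\cdots\overline L_{n-1}^0=0$.

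For the second identity $\overline M^2\cdot\overline M'\cdot\overline L_1^0\cdots\overline L_{n-2}^0=0$, I would apply the first identity not to the original tuple but to one of its members perturbed: since $\overline M'$ is integrable with numerically trivial underlying bundle, write $\overline M'=\overline M'^+-\overline M'^-$ as a difference of nef bundles, and for small rational $s$ consider the nef bundle $\overline L_{n-1}^0+s\overline M'^{\pm}$, whose underlying bundle is still numerically equivalent to $L_{n-1}$. Applying the already-proved first identity with $\overline L_{n-1}^0$ replaced by these perturbations and differencing in $s$ produces $\overline M\cdot\overline M''\cdot\overline L_1^0\cdots\overline L_{n-2}^0\cdot\overline M'=0$ for the relevant bundles; specializing the ``$\overline M'$-slot'' bundle to $\overline M$ itself (legitimate because $\overline M$ also has numerically trivial underlying bundle and is integrable) gives exactly $\overline M^2\cdot\overline M'\cdot\overline L_1^0\cdots\overline L_{n-2}^0=0$. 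I expect the main obstacle to be bookkeeping: ensuring at each use of Theorem \ref{hodge} and its bilinear corollary that every line bundle entering the intersection is genuinely nef (or a difference of nefs) and that the numerically trivial bundles can be perturbed into the nef cone while staying in the same numerical class, so that the big-and-nef hypothesis of Theorem \ref{hodge} is never violated.
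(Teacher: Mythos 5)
Your proof of the first identity is correct and follows the paper's intended approach: rerun the induction from Lemma \ref{variation} with the Cauchy--Schwarz form of the inequality of Theorem \ref{hodge} (valid since $M$ is numerically trivial and every bundle $L_i^0$, $L_i^{\epsilon(i)}$ is numerically equivalent to the big nef $L_i$, hence big nef itself) in place of Proposition \ref{hodge3}.

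The second identity has a genuine gap. You perturb $\overline L_{n-1}^0$ to $\overline L_{n-1}^0+s\,\overline M'^{\pm}$ and claim the underlying bundle is still numerically equivalent to $L_{n-1}$. That is false: writing $\overline M'=\overline M'^{+}-\overline M'^{-}$ only tells you $M'^{+}$ and $M'^{-}$ lie in the \emph{same} numerical class, not that either is numerically trivial. In general $M'^{\pm}$ is a nonzero nef class, so $L_{n-1}^0+s\,M'^{\pm}$ leaves the numerical class of $L_{n-1}$ and the first identity no longer applies to the perturbed tuple. (Perturbing by $s\,\overline M'$ directly would keep the numerical class but could destroy nefness.) The variation the first identity actually permits is by $\overline M$ itself: since $\overline M$ is $\overline L_{n-1}$-bounded, for some $m>0$ both $\overline L_{n-1}\pm\tfrac1m\overline M$ are nef, and since $M$ is numerically trivial their underlying bundles are numerically equivalent to $L_{n-1}$. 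Apply the first identity to these two choices of $\overline L_{n-1}^0$ (with any $\overline L_1^0,\dots,\overline L_{n-2}^0$ and any $\overline M'$) and take the difference; the last slot becomes $\tfrac{2}{m}\overline M$, yielding $\overline M^2\cdot\overline M'\cdot\overline L_1^0\cdots\overline L_{n-2}^0=0$ directly, which is the ``vary $\overline L_{n-1}^0$'' step the paper's proof refers to.
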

\begin{proof}
The first equality can be proved exactly in the same way as in Lemma \ref{variation}. The second equality follows from the first one by vary $\overline L_{n-1}^0$. 
\end{proof}

Go back to the equality part of Theorem \ref{hodge}.  
Apply Bertini's theorem. Replacing $\overline L_{n-1}$ by a positive multiple if necessary, there
is a section $s\in H^0(X,L_{n-1})$ such that $Y=\div(s)$ is a normal subvariety of $X$. 
Then we have 
$$
\overline M^2\cdot \overline L_1\cdots \overline L_{n-2}\cdot \overline L_{n-1}=
\overline M^2\cdot \overline L_1\cdots \overline L_{n-2}\cdot Y.
$$ 
In fact, the difference of two sides is the limit of the intersection of $\overline M^2\cdot \overline L_1\cdots \overline L_{n-2}$ with vertical classes, so it vanishes by the second equality of Lemma \ref{variation2}. 
Hence, 
$$
\overline M^2\cdot \overline L_1\cdots \overline L_{n-2}\cdot Y=0.
$$ 
By Theorem \ref{lefschetz1}, we can assume that 
$\Pic^0(X)_\QQ\to \Pic^0(Y)_\QQ$ is injective. 
Note that some multiple of $M$ lies in $\Pic^0(X)$.
It reduces the problem to $Y$.
The proof is complete since we have already treated the case of curves.

\subsection{Calabi--Yau theorem}

The goal of this section is to treat Theorem \ref{calabiyau} in the non-archimedean case. If $(X,L)$ comes from a number field, it is essentially Theorem \ref{hodge}, the arithmetic Hodge index theorem for vertical adelic line bundles. 

Let $K$ be a non-archimedean field, $X$ be a projective variety over $K$, and $L$ be a line bundle on $X$. Recall that a \emph{$K$-metric} on $L$ is a continuous and $\Gal(\overline K/K)$-invariant collection of
$\overline K$-metric $\|\cdot\|$ on $L(x)$ indexed by $x\in X(\overline K)$.
A \emph{metrized line bundle} $\overline L$ on $X$ is a pair $(L,\|\cdot\|)$ consisting of a line bundle $L$ on $X$ and a $K$-metric $\|\cdot\|$ on $L$.  
Denote by $\wh\Pic(X)$ by the group of all metrized line bundles on $X$.  

The metric is said to be \emph{semipositive} if it is a uniform limit of metrics induced by integral models $(\CX_m,\CL_m)$ of $(X,L)$ over $O_K$
where $\CL_m\in \Pic(\CX)_{\QQ}$ is nef on on fibers of $\CX_m$ above $O_K$. The metric is said to be \emph{integrable} if it is the quotient of two semipositive metrics. By abuse of notations, we also say that the corresponding metrized line bundle are semipositive or integrable if the metric is so. To be compatible with the global case, semipositive metrized line bundles are also called \emph{nef metrized line bundles}. 
Finally, we also have the notion of $\overline L$-bounded as in the global base. 

\begin{thm}[local hodge index theorem]\label{hodge6} 
Let $K$ be a non-archimedean field, and $\pi:X\to \Spec K$ be a geometrically connected projective variety of dimension $n\geq 1$. 
Let $\overline M$ be an integrable metrized line bundle on $X$ with $M$ trivial, and 
$\overline L_1, \cdots, \overline L_{n-1}$ be $n-1$ nef metrized line bundles on $X$.
Then 
$$\overline M^2\cdot \overline L_1\cdots \overline L_{n-1} \le 0.$$
Moreover, if $L_i$ is ample and $\overline M$ is $\overline L_i$-bounded for each $i$, then the equality holds if and only if  $\overline M\in \pi^* \wh\Pic(K)$.
\end{thm}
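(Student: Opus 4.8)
The plan is to deduce the statement from the global case, Theorem \ref{hodge}, via a spreading-out argument, following the principle already indicated in the excerpt that ``if $(X,L)$ comes from a number field, it is essentially Theorem \ref{hodge}.'' First I would reduce to the case where $K$ is discretely valued: a general non-archimedean $K$ is a filtered union of its complete discretely valued subfields in an appropriate sense, and intersection numbers, nefness, integrability, and $\overline L$-boundedness are all stable under passing to such a subfield over which the data $(X,L_i,M)$ together with their model metrics are defined; the measures and intersection numbers of Chambert-Loir and Gubler are compatible with base change. Then for $K$ discretely valued with residue field $k$, I would further reduce, by approximation, to the case where each $\overline L_i$ and $\overline M$ are induced by a single integral model $(\CX,\CL_i,\CM)$ over $O_K$, with $\CL_i$ nef on the special fiber and $\CM$ vertical (since $M$ is trivial, after twisting by a line bundle pulled back from $\Spec K$ we may take $\CM$ to be a vertical $O_K$-line bundle, i.e. supported on the special fiber).

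The heart of the argument is then a global interpretation. One way: realize $O_K$ (or $O_K$ localized at the given place) as the completion of a geometric or arithmetic base of dimension one. Concretely, if $k$ has positive characteristic this can be done using \cite{YZ}'s framework over finitely generated fields, but since we are in the number-field paper, I would instead use the following trick available when everything is already defined over a number field: if $(X, L_i, M)$ and the model $\CX$ all descend to a number field (which happens exactly when $(X,L)$ ``comes from a number field''), then the intersection number $\overline M^2\cdot \overline L_1\cdots\overline L_{n-1}$ computed with the model metrics at the place $v$ is, up to the positive factor $\log N_v$, the $v$-local term in the global arithmetic intersection number, and the global number vanishes while all other local terms are $\le 0$ by Proposition \ref{hodge3}(1); isolating the $v$-term gives the inequality. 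For the equality part, the global Theorem \ref{hodge} forces $\overline M$ (a vertical class) to be $\pi^*\overline M_0$ globally, hence in particular its $v$-component is pulled back from $\Spec K$, i.e. $\overline M\in\pi^*\wh\Pic(K)$ locally. For a general non-archimedean $K$ not arising from a number field, I would run the same argument after spreading out $(X,\CX,\CL_i,\CM)$ over a finitely generated $\ZZ$-subalgebra of $O_K$ and specializing, using that the variational Lemma \ref{variation} and the push-forward Lemma \ref{pushforward} are purely formal manipulations of vertical divisors that transplant verbatim to the local setting.

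Alternatively — and this is probably cleaner and is what I would actually write — I would mimic the proof of the vertical case of Theorem \ref{hodge} line by line in the local setting, never leaving $K$. The inequality is Proposition \ref{hodge3}(1) transplanted: a vertical integrable $\overline M$ is represented on a vertically factorial model $\CX$ by a vertical $\RR$-divisor $D=\sum b_i E_i$ on the single special fiber, and $\overline M^2\cdot\overline L_1\cdots\overline L_{n-1}=\sum_{i,j}b_ib_j\,E_i\cdot E_j\cdot\CL_1\cdots\CL_{n-1}=-\frac12\sum_{i\ne j}(b_i-b_j)^2 E_i\cdot E_j\cdot\CL_1\cdots\CL_{n-1}\le 0$, using $\CX_v=\sum a_i V_i$ and $\sum_j E_i\cdot E_j\cdot\CL_1\cdots\CL_{n-1}=0$ exactly as in the proof of Proposition \ref{hodge2}(1) — there is now no archimedean term. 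The Cauchy–Schwarz consequence (the analogue of Proposition \ref{hodge3}(2)) follows formally. Then the variational Lemma \ref{variation} goes through verbatim (its proof used only Proposition \ref{hodge3}, $\overline L_i$-boundedness, and bilinearity), giving $\overline M\cdot\overline M_\CX\cdot\CLL_1\cdots\CLL_{n-1}=0$ for the local push-forward $\overline M_\CX$ to any vertically factorial model with $\CL_i$ ample on the special fiber; Lemma \ref{pushforward}(1) gives $\overline M_\CX^2\cdot\CLL_1\cdots\CLL_{n-1}=0$; and the equality analysis of Proposition \ref{hodge2}(3) — now purely the statement that $E_i\cdot E_j\cdot\CL_1\cdots\CL_{n-1}>0$ whenever $V_i\cap V_j\ne\emptyset$, the ampleness being used here, together with connectedness of the special fiber (from geometric connectedness of $X$) — forces all $b_i$ equal, i.e. $\overline M_\CX\in\pi^*\wh\Pic(O_K)$. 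Letting $\CX$ vary and using Lemma \ref{pushforward}(2) to approximate the metric of $\overline M$ by those of the $\overline M_{\CX_m}$, we conclude $\overline M\in\pi^*\wh\Pic(K)$.

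The main obstacle I anticipate is the equality case: one must ensure that on a vertically factorial model over a (possibly non-discretely-valued, possibly imperfect-residue-field) $O_K$, the special fiber is still connected and that the positivity $E_i\cdot E_j\cdot\CL_1\cdots\CL_{n-1}>0$ for intersecting components genuinely holds with the ample $\CL_i$ — this uses that an ample line bundle on a proper scheme over the residue field restricts amply to every closed subscheme, which is fine, but one must be careful that ``ample on the model'' in the local sense (nef on fibers plus the relevant positivity) is strong enough, and that geometric connectedness of $X/K$ really does yield a connected special fiber after the normalization and blow-ups (this is where Zariski's connectedness theorem or the normality hypotheses enter). A secondary technical point is verifying that Chambert-Loir/Gubler measures and the arithmetic intersection pairing on models are unchanged under the base-change reductions in the first approach; avoiding that verification is precisely why I favor the second, self-contained approach.
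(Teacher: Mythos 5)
Your second, preferred argument is exactly the paper's: the authors state that the proof of the vertical case of Theorem \ref{hodge} translates directly to the local setting and omit the details, which is precisely the line-by-line transplantation of Propositions \ref{hodge2}--\ref{hodge3}, Lemma \ref{variation} and Lemma \ref{pushforward} that you describe. The one point the paper does single out, and which your write-up leaves as an unresolved worry (your proposed reduction to discretely valued subfields does not work for fields like $\BC_p$), is that for a general non-archimedean $K$ the integral models over $O_K$ are non-Noetherian, so the usual intersection theory is not applicable and one must substitute Gubler's rigid-analytic intersection theory \cite{Gu1}; with that in hand the translation goes through as you describe and your first (spreading-out) approach is unnecessary.
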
 

The theorem is a local version of Theorem \ref{hodge} in the vertical case. 
If $K$ is the completion of a number field $K_0$ at some place, and $(X,L)$ is the base changes to $K$ of a pair $(X_0,L_0)$ over $K_0$. Then Theorem \ref{hodge6} is equivalent to Theorem \ref{hodge}. 

The proof of Theorem \ref{hodge} in the vertical case is easily translated to a proof of Theorem \ref{hodge6}. We omit it. However, it is worth noting that,  
for a general non-archimedean field $K$, integral models of $X$ over $O_K$ are not Noetherian, so the usual intersection theory is not applicable. In that
case, Gubler \cite{Gu1} introduced an intersection theory using rigid-analytic
geometry, and the translation goes through by his intersection theory. 

Now we go back to Theorem \ref{calabiyau}.
We will prove the following stronger result as in the case of model metrics.

\begin{thm} \label{calabiyauvariant}
Let $L$ be an ample line bundle over $X$, and $\|\cdot\|_1$ and
$\|\cdot\|_2$ be two semipositive metrics on $L$.
View $f=-\log (\|\cdot\|_1/\|\cdot\|_2)$ as a continuous function on
$X^{\mathrm{an}}$.
Then
$$
\int_{X^{\mathrm{an}}} f\ c_1(L, \|\cdot\|_1)^{\dim X}=\int_{X^{\mathrm{an}}} f\
c_1(L, \|\cdot\|_2)^{\dim X}
$$
if and only if $f$ is a constant.
\end{thm}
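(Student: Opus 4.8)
The plan is to deduce Theorem~\ref{calabiyauvariant} from the local Hodge index theorem (Theorem~\ref{hodge6}), in the same way the vertical case of Theorem~\ref{hodge} was obtained, using one small trick to produce the boundedness hypothesis. Write $n=\dim X$. The ``if'' direction is immediate: if $f$ is a constant, then $\|\cdot\|_1$ and $\|\cdot\|_2$ differ by a constant rescaling, so $c_1(L,\|\cdot\|_1)^n=c_1(L,\|\cdot\|_2)^n$ and the two integrals coincide. For the converse, assume the integral identity, put $\overline L_i=(L,\|\cdot\|_i)$ for $i=1,2$, and set $\overline M=\overline L_1-\overline L_2$. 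Then $M=\CO_X$ is trivial, $\overline M$ is integrable (a difference of two nef metrized line bundles), both $\overline L_1,\overline L_2$ are nef, and $f=-\log\|1\|_{\overline M}$.

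First I would reformulate the hypothesis as a vanishing intersection number. For a metrized line bundle $\overline M$ with trivial underlying bundle and any integrable metrized line bundles $\overline N_1,\dots,\overline N_n$ on $X$, the local intersection pairing (Chambert-Loir~\cite{Ch}, Gubler~\cite{Gu1}) satisfies
$$\overline M\cdot\overline N_1\cdots\overline N_n=\int_{X^{\an}} f\ c_1(\overline N_1)\cdots c_1(\overline N_n).$$
Applying this with all $\overline N_i=\overline L_1$ and then with all $\overline N_i=\overline L_2$, the hypothesis becomes $\overline M\cdot(\overline L_1^n-\overline L_2^n)=0$. Factoring $\overline L_1^n-\overline L_2^n=\overline M\cdot\sum_{j=0}^{n-1}\overline L_1^j\overline L_2^{n-1-j}$ turns this into
$$\overline M^2\cdot\sum_{j=0}^{n-1}\overline L_1^j\overline L_2^{n-1-j}=0.$$
Each monomial $\overline L_1^j\overline L_2^{n-1-j}$ is a product of $n-1$ nef metrized line bundles and $M$ is trivial, so the inequality part of Theorem~\ref{hodge6} gives $\overline M^2\cdot\overline L_1^j\overline L_2^{n-1-j}\le 0$ for every $j$; being non-positive with zero sum, every term vanishes. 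By the binomial theorem, $\overline M^2\cdot(\overline L_1+\overline L_2)^{n-1}=0$.

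The key point is the boundedness. Although $\overline M=\overline L_1-\overline L_2$ need not be $\overline L_1$- or $\overline L_2$-bounded --- the metrics $\|\cdot\|_i$ are merely semipositive, and neither arithmetic ampleness nor the $\overline L$-bounded condition survives such perturbations --- it is trivially $(\overline L_1+\overline L_2)$-bounded, since $(\overline L_1+\overline L_2)+\overline M=2\overline L_1$ and $(\overline L_1+\overline L_2)-\overline M=2\overline L_2$ are both nef. Moreover the underlying bundle of $\overline L_1+\overline L_2$ is $2L$, which is ample. Therefore the equality part of Theorem~\ref{hodge6}, applied with $\overline L_1=\dots=\overline L_{n-1}=\overline L_1+\overline L_2$ and $\overline M^2\cdot(\overline L_1+\overline L_2)^{n-1}=0$, forces $\overline M\in\pi^*\wh\Pic(K)$. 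Since $K$ is non-archimedean, $\wh\Pic(\Spec K)$ consists of the trivial line bundle with constant metrics, so $\|1\|_{\overline M}$ is constant and $f=-\log\|1\|_{\overline M}$ is constant, as desired.

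The hard part, such as it is, is recognizing that one must replace the individual polarizations by their sum: $\overline M$ is typically not bounded with respect to either $\overline L_i$, so without this observation the equality case of Theorem~\ref{hodge6} cannot be invoked; once it is invoked, the conclusion is immediate. Everything else --- identifying the integrals in the statement with local intersection numbers through the Chambert--Loir measures, and the bookkeeping in the binomial expansion --- is routine, granted the non-archimedean intersection theory (available even when the integral models are non-Noetherian, by Gubler's rigid-analytic formalism, as recalled right after Theorem~\ref{hodge6}). Finally, Theorem~\ref{calabiyau} follows at once by applying Theorem~\ref{calabiyauvariant} to the function $f=-\log(\|\cdot\|_1/\|\cdot\|_2)$.
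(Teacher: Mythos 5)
Your proposal is correct and follows essentially the same route as the paper: rewrite the integral identity as $\overline M\cdot(\overline L_1^n-\overline L_2^n)=0$, factor to get $\sum_j \overline M^2\cdot\overline L_1^j\overline L_2^{n-1-j}=0$, kill each term by the inequality part of Theorem~\ref{hodge6}, recombine to $\overline M^2\cdot(\overline L_1+\overline L_2)^{n-1}=0$, and invoke the equality part using that $\overline M$ is $(\overline L_1+\overline L_2)$-bounded. The only difference is that you spell out the boundedness verification ($(\overline L_1+\overline L_2)\pm\overline M=2\overline L_{1},2\overline L_2$ nef), which the paper states without comment.
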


This theorem is just a simple consequence of Theorem \ref{hodge6}. 
In fact, denote $\overline L_1=(L, \|\cdot\|_1)$ and
$\overline L_2=(L, \|\cdot\|_2)$.
Then the equality of the integrals is just 
$$
(\overline L_1-\overline L_2)\cdot \overline L_1^{n}
=(\overline L_1-\overline L_2)\cdot \overline L_2^{n}.
$$
Here $n=\dim X$.
Equivalently, 
$$
\sum_{i=0}^{n-1} 
(\overline L_1-\overline L_2)^2 \cdot \overline L_1^i\cdot \overline L_2^{n-1-i}=0.
$$
By Theorem \ref{hodge6}, every term in the sum is non-positive. 
It forces
$$
(\overline L_1-\overline L_2)^2 \cdot \overline L_1^i\cdot \overline L_2^{n-1-i}=0, \quad \forall\ i=0,\cdot, n-1.
$$
It follows that
$$
(\overline L_1-\overline L_2)^2 \cdot (\overline L_1+ \overline L_2)^{n-1}=0.
$$
Note that $\overline L_1-\overline L_2$ is vertical and $(\overline L_1+\overline L_2)$-bounded. 
Theorem \ref{hodge6} implies  
$\overline L_1-\overline L_2\in \pi^* \wh\Pic(K)$, which is equivalent to the statement that $f$ is a constant.

\section{Algebraic dynamics}

In this section, we prove Theorem \ref{dynamicsmain}. 
We first introduce a theory of admissible adelic line bundles which will be needed in the proof. Then we prove the theorem using our arithmetic Hodge index theorem.

\subsection{Admissible arithmetic classes}
Let $(X, f, L)$ be a \emph{polarized dynamical system} over a number field $K$, i.e.,
\begin{itemize}
\item $X$ is a projective variety over $K$;
\item $f:X\to X$ is a morphism over $K$;
\item $L\in \Pic(X)_\QQ$ is an ample line bundle such that
$f^*L=qL$ from some $q>1$.
\end{itemize}
By \cite{Zh3}, Tate's limiting argument gives an adelic $\QQ$-line bundle $\overline L_f\in \wh\Pic(X)_{\QQ,\nef}$ extending $L$ and with $f^*\overline L_f=q\overline L_f$.  
In the following we generalize the definition to construct an admissible metric on any line bundle $M\in \Pic(X)$. 

\subsubsection*{Semisimplicity}

By definition, $f^*$ preserves the  exact sequence
$$0\lra \Pic ^0(X)\lra \Pic (X)\lra \NS (X)\lra 0.$$
It is known that $\NS(X)$ is a finitely generated $\ZZ$-module. Assume that $X$ is normal. 
Then $\Pic^0(X)$ is also a finitely generated $\ZZ$-module. 
In fact, the Picard functor $\underline{\Pic}^0_{X/K}$ is represented by an abelian variety $A$. See \cite[Theorem 5.4]{Kl} for example.
Then $\Pic^0(X)=A(K)$ is just the Mordell--Weil group.  
Alternatively, one can obtain the finiteness using only the Picard variety for a resolution of singularities. 

\begin{thm} \label{semisimplicity}
Let $(X, f, L)$ be a {polarized dynamical system} over a number field $K$.
Assume that $X$ is normal. 
\begin{enumerate}[(1)]
\item The operator $f^*$ is semisimple on 
$\Pic^0(X)_\BC$ (resp. $\NS (X)_\BC$) with eigenvalues of absolute values $q^{1/2}$ (resp. $q$). 
\item the operator $f^*$ is semisimple on 
$\Pic(X)_\BC$ with eigenvalues of absolute values $q^{1/2}$ or $q$. 
\end{enumerate} 
\end{thm}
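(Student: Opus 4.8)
The plan is to prove Theorem~\ref{semisimplicity} by first establishing part (1) for $\NS(X)$ and $\Pic^0(X)$ separately and then deducing part (2) by a short linear-algebra argument on the exact sequence $0\to\Pic^0(X)\to\Pic(X)\to\NS(X)\to 0$. For the semisimplicity and eigenvalue statements, the key input is the existence of a nondegenerate $f^*$-equivariant quadratic (or Hermitian) form on each piece, together with positivity coming from the ample polarization $L$; the idea is the same as in the proof that Frobenius acts semisimply on $\ell$-adic cohomology of abelian varieties, adapted to this setting.

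First I would treat $\NS(X)_\BR$. On the real N\'eron--Severi space there is the intersection pairing $\langle D_1,D_2\rangle = D_1\cdot D_2\cdot L^{n-2}$, which by the geometric Hodge index theorem (Theorem~\ref{lefschetz2} in the appendix) has signature $(1,\rho-1)$ on the subspace where we work, with $L$ spanning the positive part. Since $f^*L=qL$ and the intersection product transforms by $f^*(D_1)\cdot f^*(D_2)\cdot f^*(L)^{n-2}=(\deg f)\, D_1\cdot D_2\cdot L^{n-2}$, and $\deg f = q^n$ on an $n$-dimensional variety, one gets that $q^{-1}f^*$ is an isometry of this form of signature $(1,\rho-1)$ fixing the positive direction $[L]$. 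An isometry of a form of signature $(1,m)$ preserving the positive cone is conjugate to an element of $\mathrm{O}(1,m)$ fixing a timelike vector, hence lies in a compact group $\mathrm{O}(m)$; therefore $q^{-1}f^*$ is diagonalizable with all eigenvalues of absolute value $1$, giving semisimplicity of $f^*$ on $\NS(X)_\BC$ with eigenvalues of absolute value $q$. For $\Pic^0(X)_\BR$, I would use that $\Pic^0(X)=\wh A(K)$ where $A=\underline{\Pic}^0_{X/K}$ is the dual abelian variety, that $f$ induces an isogeny-type endomorphism of $A$, and that the N\'eron--Tate height pairing attached to a symmetric ample class (e.g. one built from $L+[-1]^*L$ on $A$, or directly a polarization pulled back appropriately) is positive definite and transforms by the factor $q$ under $f^*$; hence $q^{-1/2}f^*$ is an isometry of a positive definite form, thus semisimple with eigenvalues of absolute value $1$, so $f^*$ on $\Pic^0(X)_\BC$ is semisimple with eigenvalues of absolute value $q^{1/2}$.

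Given part (1), part (2) follows by noting that $f^*$ acts on the extension $0\to\Pic^0(X)_\BC\to\Pic(X)_\BC\to\NS(X)_\BC\to 0$ compatibly, and the eigenvalues of $f^*$ on $\Pic^0(X)_\BC$ (all of absolute value $q^{1/2}$) are disjoint from those on $\NS(X)_\BC$ (all of absolute value $q$); an operator respecting a short exact sequence with no common generalized eigenvalue on sub and quotient is semisimple iff it is so on the sub and on the quotient, and the extension splits $f^*$-equivariantly. This gives semisimplicity on $\Pic(X)_\BC$ with eigenvalues of absolute value $q^{1/2}$ or $q$, as claimed.

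The main obstacle I expect is the $\Pic^0$ statement in the singular (merely normal) case: one must make sense of ``$f^*$ on $A(K)$'' as coming from a genuine morphism of abelian varieties with the right degree behavior, and produce the positive-definite $f^*$-equivariant pairing cleanly. The cleanest route is probably to pass to a resolution of singularities $\wt X\to X$ (using that over a number field resolutions exist), observe that $\Pic^0$ is a birational invariant so $\Pic^0(X)_\BQ\cong\Pic^0(\wt X)_\BQ$ equivariantly for the induced rational self-map, and then invoke the classical theory of the N\'eron--Tate/canonical height on the Albanese or Picard variety, where positivity and the transformation law under the polarized dynamics are standard. Care is also needed to check the exact scaling factors ($q$ on $\NS$, $q^{1/2}$ on $\Pic^0$) are correct, which amounts to comparing $\deg f$ with $q^n$ and using $f^*L=qL$; this is routine but should be spelled out since everything downstream depends on it.
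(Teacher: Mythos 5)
Your argument for $\NS(X)$ is essentially the paper's: both pass to the geometric Hodge index theorem (Theorem~\ref{lefschetz2}) to get a definite form on the complement of $[L]$, and observe that $q^{-1}f^*$ is an isometry fixing $[L]$, hence semisimple with unit eigenvalues. For $\Pic^0(X)$ the paper takes a different and more self-contained route than yours: rather than passing to a resolution and the N\'eron--Tate height on the Picard/Albanese variety of that resolution, it builds an arithmetic pairing directly on $X$, setting $(\xi_1,\xi_2)=\overline\xi_1\cdot\overline\xi_2\cdot\overline L^{n-1}$ where $\overline\xi_i$ is the adelic extension of $\xi_i$ perpendicular to all vertical classes, constructed from the Poincar\'e bundle on $X\times A$ (with $A=\underline{\Pic}^0_{X/K}$, an abelian variety since $X$ is normal) by Tate's limiting argument. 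Definiteness is checked by restricting to a Bertini curve $C$ representing $L^{n-1}$, where the pairing becomes $-2\langle\xi_1|_C,\xi_2|_C\rangle_{\rm NT}$ by Faltings--Hriljac, and then invoking the Lefschetz-type fact (Theorem~\ref{lefschetz1}) that $\Pic^0(X)\to\Pic^0(C)$ has finite kernel; the scaling $(f^*\xi_1,f^*\xi_2)=q(\xi_1,\xi_2)$ is a projection-formula computation on $X$ using $\deg f=q^n$. This cleanly avoids the obstacles you yourself flag in your last paragraph: $f$ need not lift to a morphism of a resolution, there is no need to realize $f^*$ as an endomorphism of $A$ with a controlled action on a polarization, and one stays with an honest normal $X$ throughout. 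Part (2) is then deduced from (1) exactly as you describe, since the eigenvalues on $\Pic^0(X)_\CC$ and $\NS(X)_\CC$ have different absolute values.
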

\begin{proof}
The result can be proved using Hodge--Riemann bilinear relation (for Betti cohomology). See Serre \cite{Ser} for the case when $X$ is smooth. Here we present an algebraic proof which can be generalized to positive characteristics. 
It suffices to prove (1), since (2) is a consequence of (1).

We first consider $\NS(X)_\CC$. Write $n=\dim X$ as usual. 
Make the decomposition 
$$\NS(X)_\RR:=\RR L\oplus P(X), \quad
P(X)=\{\xi\in \NS(X)_\RR: \xi\cdot L^{n-1}=0\}.$$
By Theorem \ref{lefschetz2}, the pairing
$$\langle \xi_1,\xi_2 \rangle= \xi_1\cdot \xi_2\cdot L^{n-2}$$
is a negative definite quadratic form on $P(X)$.
The projection formula gives   
$$\langle f^*\xi_1, f^*\xi_2 \rangle=q^2\langle \xi_1, \xi_2 \rangle.$$
If follows that $q^{-1}f^*$ is an orthogonal transformation (with respect to the quadratic form). Then $q^{-1}f^*$ is diagonalizable with eigenvalues of absolute values 1. 

Next we consider $\Pic^0(X)_\BC$. 
For $\xi_1,\xi_2\in \Pic^0(X)$, define a pairing
$$(\xi_1, \xi_2)=\overline \xi_1\cdot\overline \xi_2\cdot \overline L^{n-1}.$$
Here $\overline\xi_i$ is an adelic line bundle on $X$ extending $\xi_i$ and with zero intersection with any vertical classes on $X$ for $i=1,2$, and $\overline L$ is any adelic line bundle on $X$ extending $L$. The intersection does not depend on the choice of the extension $\overline L$ since $\overline\xi_1$ is perpendicular to any vertical class.  

The extension $\overline\xi_i$ always exists. In fact, we already know that
$A=\underline{\Pic}^0_{X/K}$ is an abelian variety. 
Let $P$ be the universal bundle on $X\times A$. Then for any $\alpha\in A(\overline K)$, the line bundle 
$P|_{X\times \alpha}$ on $X$ is exactly the element of $\underline{\Pic}^0_{X/K}(\overline K)$
represented by $\alpha$. 
Rigidify $P$ by $P|_{x_0\times A}=0$ for some point $x_0\in X(K)$. Here we assume $x_0$ exists by extending $K$ if necessary. The multiplication $[2]_X:X\times A\to X\times A$ on the second component gives $[2]_X^*P=2P$. By Tate's limiting argument, we obtain an adelic line bundle $\overline P$ extending $P$ with $[2]_X^*\overline P=2\overline P$. 
Then $\overline P|_{X\times \alpha_i}$ gives the desired extension of $\xi_i$, where $\alpha_i\in A(K)$ is the point representing $\xi_i$.

Go back to the pairing 
$$(\xi_1, \xi_2)=\overline \xi_1\cdot\overline \xi_2\cdot \overline L^{n-1}.$$
It is extended to a pairing on $\Pic^0(X)_\RR$. 
We claim that the pairing is negative definite. 
In fact, let $C$ be a closed non-singular curve in $X$ representing $L^{n-1}$, which exists by Bertini's theorem. Then we have 
$$(\xi_1, \xi_2)=\overline \xi_1|_C\cdot\overline \xi_2|_C
=-2\pair{\xi_1|_C,\ \xi_2|_C}_{\rm NT}.$$
Here we used the Hodge index theorem of \cite{Fal, Hr}. 
By Theorem \ref{lefschetz1}, the map $\Pic^0(X)\to \Pic^0(C)$ has a finite kernel. It follows that the paring is negative definite. 

On the other hand, the projection formula gives   
$$( f^*\xi_1, f^*\xi_2 )=q\ (\xi_1, \xi_2 ).$$
It follows that $q^{-1/2}f^*$ is an orthogonal transformation (with respect to the  pairing). Then $q^{-1/2}f^*$ is diagonalizable with eigenvalues of absolute values 1. The result is proved. 

\end{proof}

By the theorem above, the exact sequence
$$0\lra \Pic ^0(X)_\BC\lra \Pic(X)_\BC\lra \NS (X)_\BC\lra 0.$$
has a splitting
$$\ell_f:  \NS (X)_\BC\lra \Pic(X)_\BC$$
by identifying $\NS (X)_\BC$ with the subspace of $\Pic(X)_\BC$ generated 
by eigenvectors belonging to eigenvalues of absolute values $q$. 
It is easy to see that 
the splitting actually descends to 
$$\ell_f:  \NS (X)_\QQ\lra \Pic(X)_\QQ.$$

\begin{definition}
We say an element of $\Pic(X)_\BC$ is \emph{$f$-pure of weight $1$ (resp. $2$)} if it lies in $\Pic^0(X)_\BC$ (resp. $\ell_f(\NS (X)_\BC)$). 
\end{definition}

\subsubsection*{Admissible metrics}

Now we introduce $f$-admissible metrics for line bundles. Note that the eigenvalues can be imaginary numbers. So we first extend the group of adelic line bundles. 

Recall that $\wh\Pic(X)$ is the group of adelic line bundles on $X$. 
Assume that $\pi:X\to \Spec K$ is geometrically connected.
Write $\BFF$ for $\ZZ, \QQ, \RR$ or $\CC$.
Similar to \cite{Mo4}, we introduce
$$
\wh\Pic(X)_{[\BFF]}:=\frac{\wh\Pic(X)\otimes_\ZZ\BFF}{
(\wh\Pic(X)_\vert\otimes_\ZZ\BFF)_0}. 
$$
Here we describe the subspace in the denominator. 
Denote
$$C(X,\BFF)=\oplus_v C(X_{K_v}^\an,\BFF),$$
where $C(X_{K_v}^\an,\BFF)$ denotes the space of continuous functions from $X_{K_v}^\an$ to $\BFF$. 
When $\BFF=\ZZ,\QQ$, it is endowed with the discrete topology, so $C(X_{K_v}^\an,\BFF)=\BFF$ in these cases. 
The map $\log\|1\|:\wh\Pic(X)_\vert\to C(X,\RR)$ extends to an $\BFF$-linear map
$$\log\|1\|:\wh\Pic(X)_\vert\otimes_\ZZ\BFF\to C(X,\BFF).$$
Define
$$
(\wh\Pic(X)_\vert\otimes_\ZZ\BFF)_0:=\ker(\log\|1\|:\wh\Pic(X)_\vert\otimes_\ZZ\BFF\to C(X,\BFF)).
$$
By definition, it is easy to have
$$
\wh\Pic(X)_{[\ZZ]}=\wh\Pic(X), \quad
\wh\Pic(X)_{[\QQ]}=\wh\Pic(X)_{\QQ}.
$$
However, they are not true for $\RR$ or $\CC$. But we still have
$$\wh\Pic(X)_{[\CC]}=\wh\Pic(X)_{[\RR]}\otimes_\RR\CC.$$

Define $\wh\Pic(X)_{\intb, [\BFF]}$ to be the image of $\wh\Pic(X)_{\intb}\otimes_\ZZ\BFF$ in $\wh\Pic(X)_{[\BFF]}$.
The intersection theory extends to $\wh\Pic(X)_{\intb,[\BFF]}$ by linearity. 
The positivity notions are extended to $\wh\Pic(X)_{\intb,[\RR]}$.

The action $f^*: \wh\Pic(X)\to \wh\Pic(X)$ extends to 
$\wh\Pic(X)_{[\BFF]}$ naturally. 
The goal is to study the spectral theory of this action.

\begin{definition}
An element $\overline M$ of $\wh\Pic(X)_{[\CC]}$ is called \emph{$f$-admissible} if
we can write $\overline M=\sum_{i=1}^m \overline M_i$ such that each $\overline M_i$ is an eigenvector of $f^*$ in $\wh\Pic(X)_{[\CC]}$. 
\end{definition}

The main result here asserts the existence of an admissible section of the forgetful map $\wh\Pic(X)_{[\CC]}\to \Pic(X)_\CC$.

\begin{thm} \label{admisible metric}
For any $M\in \Pic(X)_{\CC}$, there exists a unique $f$-admissible lifting 
$\overline M_f$ of $M$ in $\wh\Pic(X)_{[\CC]}$. Moreover, for $\BFF=\ZZ, \QQ, \RR,\CC$, if $M\in \Pic(X)_{\BFF}$, then $\overline M_f\in \wh\Pic(X)_{[\BFF]}$.
\end{thm}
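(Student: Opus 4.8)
The plan is to reduce everything to the spectral decomposition of $f^*$ on $\Pic(X)_\CC$ (Theorem \ref{semisimplicity}(2)) together with the known existence of Tate-style limits on eigenvectors of weight $1$ and $2$. Since $f^*$ is semisimple on $\Pic(X)_\CC$ with eigenvalues $\lambda$ of absolute value $q^{1/2}$ or $q$, I would first decompose $M = \sum_\lambda M_\lambda$ into generalized (here genuine, by semisimplicity) eigenvectors $M_\lambda \in \Pic(X)_\CC$, $f^*M_\lambda = \lambda M_\lambda$, and treat each $M_\lambda$ separately: by linearity and by the already-established splitting $\Pic(X)_\CC = \Pic^0(X)_\CC \oplus \ell_f(\NS(X)_\CC)$ into $f$-pure weight $1$ and weight $2$ parts, it suffices to build a unique $f$-admissible lift for an $f$-pure eigenvector $M$, i.e. one with $f^*M = \lambda M$ where $|\lambda|^2 = q^w$, $w\in\{1,2\}$.

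For the \textbf{existence}, I would start from any adelic line bundle $\overline M^{(0)} \in \wh\Pic(X)_{[\CC]}$ lifting $M$ (take any integral-model metric and extend $\CC$-linearly), and run Tate's limiting argument with the normalized operator $\lambda^{-1} f^*$: set $\overline M^{(k)} = \lambda^{-k}(f^*)^k \overline M^{(0)}$ and show the sequence is Cauchy in the metric topology on $\wh\Pic(X)_{\intb,[\CC]}$. The point is that $\overline M^{(k+1)} - \overline M^{(k)} = \lambda^{-(k+1)}(f^*)^k\big(f^*\overline M^{(0)} - \lambda\overline M^{(0)}\big)$, and $f^*\overline M^{(0)} - \lambda\overline M^{(0)}$ is a \emph{vertical} class in $\wh\Pic(X)_{[\CC]}$ (its underlying line bundle is $f^*M - \lambda M = 0$), hence represented by a bounded family of continuous functions $\{\varphi_v\}$ on the $X_{K_v}^{\an}$; pulling back by $f$ contracts the sup-norm of such functions by the comparison $\|f^*\varphi\|_{\sup} \le \|\varphi\|_{\sup}$ (reduction maps are compatible, Berkovich spaces are compact), so the geometric factor $|\lambda|^{-1}$ — being $q^{-1/2} < 1$ or $q^{-1} < 1$ — forces geometric convergence. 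The limit $\overline M_f := \lim_k \overline M^{(k)}$ then satisfies $f^*\overline M_f = \lambda\overline M_f$, so it is an eigenvector, hence $f$-admissible, and it lifts $M$. Running this separately on each $M_\lambda$ and summing gives the admissible lift of a general $M$.

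For \textbf{uniqueness}: the difference of two $f$-admissible lifts of $M$ is an $f$-admissible element $\overline V$ of $\wh\Pic(X)_{[\CC]}$ with \emph{trivial} underlying line bundle, i.e. $\overline V \in (\wh\Pic(X)_\vert \otimes_\ZZ\CC)/(\cdots)_0 \subset \wh\Pic(X)_{[\CC]}$, represented by a nonzero tuple of continuous functions on the $X_{K_v}^{\an}$. Writing $\overline V = \sum_i \overline V_i$ with $f^*\overline V_i = \mu_i \overline V_i$ and $|\mu_i| = q^{1/2}$ or $q$, each vertical eigen-component $\overline V_i$ would satisfy $f^*\varphi_{i,v} = \mu_i\varphi_{i,v}$ for the corresponding continuous function, contradicting $\|f^*\varphi\|_{\sup} \le \|\varphi\|_{\sup}$ unless $\varphi_{i,v} = 0$ for all $v$; hence $\overline V = 0$. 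Finally, for the \textbf{rationality refinement}, if $M \in \Pic(X)_\BFF$ then the eigendecomposition need not be defined over $\BFF$, but $\overline M_f$ is characterized by the $\BFF$-linear conditions ``lifts $M$'' and ``$f$-admissible'' (equivalently: killed by $\prod_\lambda(f^* - \lambda)$ over the relevant $\lambda$), and uniqueness over $\CC$ forces $\overline M_f$ to be Galois-invariant under $\Gal(\CC/\BFF_{\mathrm{closure}})$ in the appropriate sense; more concretely, since the splitting $\ell_f$ already descends to $\QQ$ and the Tate limit of a $\QQ$- (resp. $\RR$-) class stays in $\wh\Pic(X)_{[\QQ]}$ (resp. $\wh\Pic(X)_{[\RR]}$) because the operators and the metric completion are defined over those fields, the lift lands in $\wh\Pic(X)_{[\BFF]}$.

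The main obstacle I anticipate is not the Tate limit itself — that is routine once the contraction estimate is in place — but verifying cleanly that the contraction $\|f^*\varphi\|_{\sup} \le \|\varphi\|_{\sup}$ holds for the relevant continuous functions on Berkovich spaces for a \emph{general} non-archimedean (possibly non-Noetherian residue) local field at each place, and that the normalized iterates genuinely converge in $\wh\Pic(X)_{\intb,[\CC]}$ rather than merely in some weaker topology; this is where one must be careful about working place-by-place with the $C(X_{K_v}^{\an},\CC)$ and controlling all but finitely many places simultaneously via the coherence/integral-model condition built into the definition of $\wh\Pic(X)_{[\CC]}$.
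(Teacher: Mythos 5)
Your proposal is correct and follows essentially the same route as the paper: reduce to $f^*$-eigenvectors of $\Pic(X)_\CC$ via Theorem \ref{semisimplicity}, run Tate's limiting argument with the normalized operator $\lambda^{-1}f^*$ (convergent because the ambiguity in any lift is a vertical class, i.e.\ a tuple of continuous functions whose sup-norms $f^*$ does not increase, while $|\lambda|^{-1}<1$), and observe that the limit is independent of the initial lift and stays in $\wh\Pic(X)_{[\BFF]}$ when $M\in\Pic(X)_{\BFF}$ --- the paper's only additional content being the formalism of $\BFF$-sections needed to make ``uniform convergence'' precise in $\wh\Pic(X)_{[\BFF]}$, which is exactly the technical point you flag. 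One caution in your uniqueness step: a vertical $f^*$-eigenvector need not have eigenvalue of absolute value $q^{1/2}$ or $q$ (for instance $\pi^*\wh\Pic(K)$ consists of vertical eigenvectors with eigenvalue $1$), so the sup-norm contraction only kills the vertical eigencomponents once one restricts, as the paper implicitly does, to admissible decompositions into eigenvectors whose eigenvalues have absolute value greater than $1$.
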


\begin{proof}
It suffices to assume that $M\in \Pic(X)_{\CC}$ is an eigenvector of $f^*$. Namely, $f^* M = \lambda M$ with $|\lambda|=q^{1/2}$ or $q$.
We claim that Tate's limiting argument of \cite{Zh3} still works here. 
Namely, let $\overline M$ be any lifting of $M$ in $\wh\Pic(X)_{[\CC]}$, then 
$(\lambda^{-1}f^*)^m \overline M$ converges \emph{uniformly} to a unique element in $\wh\Pic(X)_{[\CC]}$. This limit does not depend on the choice of $\overline M$, and we define $\overline M_f$ to be this limit.  
We only sketch an idea of the proof. 

Let $\overline M\in \wh\Pic(X)_{[\BFF]}$ be an element represented by 
$$
\overline M=\sum_{i=1}^r a_i\otimes \overline M_i, \quad a_i\in \BFF, \ \overline M_i\in \wh\Pic(X).
$$
Take \emph{an $\BFF$-section} $s$ of $\overline M$, i.e., a formal product 
$$
s= \otimes_{i=1}^r s_i^{\otimes a_i}, \quad s_i \in H^0(X,M)\setminus\{0\}.
$$ 
It has \emph{divisor}
$$
\div(s)=\sum_{i=1}^r a_i\ \div(s_i) \in \Div(X)_\BFF. 
$$
Then the metrics turn to a function 
$$
-\log \|s\| =- \sum_{i=1}^r a_i\log\|s_i\|
$$
in the space
$$S(X,\BFF)=\oplus_v S(X_{K_v}^\an,\BFF),$$
where $S(X_{K_v}^\an,\BFF)$ denotes the space of functions from $X_{K_v}^\an$ to $\BFF$ with logarithmic singularity along some $\BFF$-divisor. Then we convert the uniform convergence of $\overline M$ to the uniform convergence of $-\log\|s\|$. 

With these preparations, the original argument works here. We remark that we can further define the equivalence of two $\BFF$-sections $s_1$ and $s_2$ by $\div(s_1)=\div(s_2)$.
\end{proof}

\begin{example}
If $M\in \Pic ^0(X)$, then $\overline M_f$ is represented by an arithmetic class which is perpendicular to all vertical arithmetic classes. The arithmetic class was used in the proof of Theorem \ref{semisimplicity}.
\end{example}

\begin{example}
The admissible metrics for abelian varieties are well-known. 
Let $X$ be an abelian variety and $f=[m]$ be the multiplication by $m$ on $X$. Here $m$ is an integer with $|m|>1$. 
Any symmetric and ample line bundle $L$ on $X$ gives a polarization of $(X,f)$. We have $q=m^2$ in this case. 
Then $\ell_f(\NS(X)_\QQ)$ consists of exactly the rational multiples of symmetric line bundles, and $\Pic^0(X)$ is exactly the group of anti-symmetric line bundles. It is easy to see that $f^*$ acts as $m$ on $\Pic^0(X)$, and as $m^2$ on $\ell_f(\NS(X)_\QQ)$. 
The $f$-admissible classes can be obtained by the usual Tate's limiting argument (without get $\Pic(X)_\CC$ involved). 
\end{example}

Recall that we have a canonical section
$$
\ell_f: \NS(X)_\QQ \lra \Pic(X)_\QQ
$$
for the surjection $\Pic(X)_\QQ\to \NS(X)_\QQ$. 
By the $f$-admissible classes, we get a section 
$$
\wh\ell_f: \NS(X)_\QQ \lra \wh\Pic(X)_{[\QQ]}
$$
for the surjection $\wh\Pic(X)_{[\QQ]}\to \NS(X)_\QQ$. 

\begin{definition}
For  $\BFF=\QQ, \RR,\CC$, 
define 
$$
\wh\ell_f: \NS(X)_\BFF \lra \wh\Pic(X)_{[\BFF]}
$$
to be the map which sends $\xi\in \NS(X)_\BFF$
to the unique $f$-admissible class in $\wh\Pic(X)_{[\BFF]}$ extending $\ell_f(\xi)$. 
\end{definition}

\subsubsection*{Positivity}

The key result for our application is the following assertion. 

\begin{thm} \label{positivity}
Let $(X, f, L)$ be a {polarized dynamical system} over a number field $K$ with $X$ normal. 
If $M\in \Pic(X)_{\RR}$ is ample and $f$-pure of weight 2, then $\overline M_f$  is nef.
\end{thm}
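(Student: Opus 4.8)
The plan is to reduce the nefness of $\overline M_f$ to the case in which $M$ polarizes some iterate of $f$ --- where it is the nefness of $\overline L_f$ already quoted from \cite{Zh3} --- by combining a continuity argument with the spectral structure supplied by Theorems~\ref{semisimplicity} and~\ref{lefschetz2} and an averaging trick.

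First I would make two soft reductions. The admissible lift is insensitive to iteration: for an $f^*$-eigenvector the limit defining $\overline M_{f^{\circ k}}$ in Theorem~\ref{admisible metric} is a subsequence of the one defining $\overline M_f$, so the two agree, and by linearity on each $(f^*)^k$-eigenspace this persists for every $M$; since the hypotheses ``ample'' and ``$f$-pure of weight $2$'' are preserved under $f\mapsto f^{\circ k}$, I am free to pass to any positive iterate. Next, the uniform convergence built into the proof of Theorem~\ref{admisible metric} makes the section $\wh\ell_f$ continuous, and the nef cone of $\wh\Pic(X)_{\intb,[\RR]}$ is closed under uniform limits; as the ample cone $\mathcal A\subseteq\NS(X)_\RR$ is open, rational ample classes are dense in it, so it suffices to treat $M=\ell_f(\xi)$ with $\xi\in\NS(X)_\QQ$ ample, and, after clearing denominators, an honest ample line bundle.

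Next comes the positivity input. By Theorem~\ref{semisimplicity} $f^*$ is semisimple on $\NS(X)_\RR$ with all eigenvalues of absolute value $q$, and by Theorem~\ref{lefschetz2} the operator $q^{-1}f^*$ fixes $\RR L$ and acts orthogonally on $P(X)=L^\perp$ for the negative-definite form $\xi_1\cdot\xi_2\cdot L^{n-2}$; hence $\langle q^{-1}f^*\rangle$ has compact closure $C$ in $\GL(\NS(X)_\RR)$, a compact abelian group with fixed space $(\NS(X)_\RR)^C=\ker(f^*-q)$. Now I would run Tate's limiting argument on a \emph{model} rather than the class: starting from a nef adelic model $\overline{M_0}$ of $\ell_f(\xi)$, each Cesàro average $\frac1k\sum_{m<k}(q^{-1}f^*)^m\overline{M_0}$ is again a nef adelic $\RR$-line bundle, and --- writing $\overline{M_0}=\wh\ell_f(\xi)+\overline V$ with $\overline V$ vertical, using that $(q^{-1}f^*)^m\overline V\to0$ uniformly and that the $f^*$-eigencomponents with eigenvalue $\neq q$ Cesàro-average to zero --- these converge uniformly to $\wh\ell_f(P\xi)$, where $P\xi=\int_C g\,\xi\,dg$. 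Since $P\xi$ is ample and $f^*$-fixed up to the factor $q$, the line bundle $\ell_f(P\xi)$ polarizes $f$; thus $\wh\ell_f(\eta)$ is nef for every ample $\eta\in\ker(f^*-q)$ (the case $\eta=[L]$ being the quoted nefness of $\overline L_f$), and, via the first reduction applied to $f^{\circ k}$, for every ample $\eta\in\bigcup_{k\ge1}\ker\bigl((f^*)^k-q^k\bigr)$.

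It remains to upgrade this to: $\wh\ell_f(\xi)$ is nef for \emph{every} ample $\xi\in\NS(X)_\RR$ --- equivalently, the nef locus $\mathcal N=\{\xi:\wh\ell_f(\xi)\ \text{nef}\}$, which is a closed, convex, $C$-invariant cone containing the classes above, contains all of $\mathcal A$. \emph{This is the step I expect to be the main obstacle.} The difficulty is structural: $\mathcal N$ is automatically stable under the $C$-action, convex combinations and uniform limits, yet none of these ever moves a class out of the complement of $\mathcal N$, so nefness cannot be propagated outward from $\ker(f^*-q)$ by formal manipulation; the real content is to control the directions on which $q^{-1}f^*$ acts by an irrational rotation, i.e.\ when $q^{-1}f^*$ fails to be of finite order on $\NS(X)$. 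I would attack it with the very mechanisms the paper develops for ``ampleness is not preserved by perturbations'': writing $\ell_f(\xi)=\ell_f(P\xi)+M'$ with $\ell_f(P\xi)$ ample and $M'$ being $\ell_f(P\xi)$-bounded in $\NS(X)$, I would first try to show $\wh\ell_f(\xi)$ is $\overline{(\ell_f(P\xi))}_f$-bounded, and then feed this into the arithmetic Hodge index theorem (Theorem~\ref{hodge}) and the arithmetic Nakai--Moishezon criterion of \cite{Zh3} for the ample class $\overline{(\ell_f(P\xi))}_f$ to force the residual self-intersection numbers to vanish and thereby promote $\wh\ell_f(\xi)$ from integrable to nef; an induction on the number of irrational-rotation blocks of $q^{-1}f^*$ may help organize the bookkeeping. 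Once $\mathcal N\supseteq\mathcal A$ the theorem follows.
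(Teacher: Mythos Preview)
Your Ces\`aro-averaging step is the wrong move, and the gap you flag afterwards is a direct consequence of it.  Averaging $\frac1k\sum_{m<k}(q^{-1}f^*)^m$ annihilates precisely the eigencomponents of $\xi$ (and of the lift) belonging to eigenvalues $\lambda\neq q$, so it can only ever produce $\wh\ell_f(P\xi)$ with $P\xi\in\ker(f^*-q)$.  That is why you are left trying to propagate nefness from the ``polarizing'' subspace to the rest of the ample cone, and your proposed fix via Theorem~\ref{hodge} plus Nakai--Moishezon is not a plan: the Hodge index inequality is a \emph{negativity} statement about $\overline M^2\cdot\overline L_1\cdots\overline L_{n-1}$ and says nothing about the nonnegativity of $(\overline M|_Y)^{\dim Y+1}$ on subvarieties, which is what nefness (or Nakai--Moishezon) asks for.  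Being $\overline L$-bounded plus integrable never forces nefness.

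The missing idea is to replace averaging by \emph{recurrence}.  Pick any nef lift $\overline M$ of $M$ and write $\overline M=\sum_i\overline N_i$ with $f^*N_i=\lambda_iN_i$, $|\lambda_i|=q$.  Then
\[
(q^{-1}f^*)^m\overline M=\sum_i (q^{-1}\lambda_i)^m\,(\lambda_i^{-1}f^*)^m\overline N_i,
\]
each $(\lambda_i^{-1}f^*)^m\overline N_i$ converges uniformly to $\overline N_{i,f}$ by Tate's argument, and the phases $(q^{-1}\lambda_i)^m$ lie on the unit circle.  By simultaneous Diophantine approximation there is a subsequence $\{m_k\}$ with $(q^{-1}\lambda_i)^{m_k}\to 1$ for every $i$; along this subsequence the nef classes $(q^{-1}f^*)^{m_k}\overline M$ converge to $\sum_i\overline N_{i,f}=\overline M_f$ in the mixed ``coefficients $+$ metrics'' sense, and nothing is projected away.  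The residual work --- showing that a limit of nef $\RR$-classes under this mixed convergence is still nef when the underlying bundle is ample --- is a genuine lemma (it is not your ``soft'' closure-under-uniform-limits, because the underlying line bundle is moving), and the paper proves it by passing to an integral model where arithmetic ampleness is open in the finite-dimensional span of the $\CNN_i$.  Your reduction to $\QQ$-classes quietly assumes this same lemma, so you may as well use it at the end and skip that reduction entirely.
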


\begin{proof}
By the action of the complex conjugation, we have $\overline M_f \in \wh\Pic(X)_{[\RR]}$. 

Let $\overline M$ be any nef extension of $M$ in $\wh\Pic(X)_{\nef}$. 
Still consider the sequence $\overline M_m=(q^{-1} f^*)^m \overline M$. 
Here every term $\overline M_m$ is nef. 
We will pick a subsequence ``convergent'' to $\overline M_f$. 
Decompose
$$
\overline M= \sum_{i=1}^r \overline N_i,
$$
where $f^* N_i=\lambda_i N_i$ with $|\lambda_i|=q$ for any $i$. 
Then
$$\overline M_m= \sum_{i=1}^r (q^{-1} f^*)^m \overline N_i.$$
To compare with 
$$
\overline M_{f}= \sum_{i=1}^r  \overline N_{i,f},
$$
write
$$\overline M_m= \sum_{i=1}^r  (q^{-1} \lambda_i)^m
\cdot (\lambda_i^{-1} f^*)^m \overline N_i.$$
Here $(\lambda_i^{-1} f^*)^m \overline N_i$ converges to $\overline N_{i,f}$ uniformly. 

Since $|q^{-1} \lambda_i|=1$, we can find an infinite subsequence $\{m_k\}_k$ such that $(q^{-1} \lambda_i)^{m_k} \to 1$ for every $i$. 
Then $\overline M_{m_k}$ ``converges'' to $\overline M$ in the sense of combining the uniform convergence of adelic metrics and the convergence of coefficients. 

Then the theorem follows from the lemma below. 
Note that $\overline N_{i,f}$ and $(q^{-1} f^*)^m \overline N_i$ may lie in $\wh\Pic(X)_{[\CC]}$ instead of in $\wh\Pic(X)_{[\RR]}$. But $\overline M, \overline M_f$ and $\overline M_m$ are in $\wh\Pic(X)_{[\RR]}$, and thus we can take the real parts of their decompositions above to get only elements of  $\wh\Pic(X)_{[\RR]}$ involved.
\end{proof}

\begin{lem}
Suppose we are given 
$$\overline M=\sum_{i=1}^r a_i \overline N_i, \quad
\overline M_m=\sum_{i=1}^r a_{i,m} \overline N_{i,m}, \quad
a_i\in \RR, \quad a_{i,m}\in \RR.$$ 
Here every $\overline N_i$ and every $\overline N_{i,m}$ are adelic line bundles on a projective variety $X$ over a number field. For any $i=1,\cdots, r$, assume the following convergence conditions:
\begin{itemize}
\item $a_{i,m}\to a_i$ as $m\to \infty$;
\item $N_{i,m}=N_{i}$ for any $m$;
\item $\overline N_{i,m}$ converges uniformly to $\overline N_{i}$ as $m\to \infty$.
\end{itemize}
If $M$ is ample and $\overline M_m$ is nef for any $m$, then $\overline M$ is nef. 
\end{lem}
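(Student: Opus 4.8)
The plan is to derive nefness of $\overline M$ by showing that $\overline M+\epsilon\overline A$ is nef for every $\epsilon>0$, where $\overline A$ is a conveniently chosen ample adelic line bundle on $X$; granting this, since $M$ is ample one has $\overline M+\epsilon\overline A\to\overline M$ as $\epsilon\to0$ (uniform convergence of metrics together with convergence of the underlying classes in $\Pic(X)_\RR$), and since the nef locus is a closure, $\overline M$ is nef. Before fixing $\epsilon$ I would enlarge $\overline A$: the $\overline N_i$ are finitely many fixed integrable bundles, so writing $\overline N_i=\overline P_i-\overline Q_i$ with $\overline P_i,\overline Q_i\in\wh\Pic(X)_{\nef}$ and taking $\overline A=\overline A_0+\sum_i(\overline P_i+\overline Q_i)$ for some ample $\overline A_0$, one makes $\overline A-\overline P_i$ and $\overline A-\overline Q_i$ nef for all $i$, hence $\overline A\pm\overline N_i$ nef for all $i$ (each $\overline N_i$ is $\overline A$-bounded with constant $1$), while $\overline A$ remains ample.

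Now fix $\epsilon>0$. Using $N_{i,m}=N_i$, write $\overline N_{i,m}=\overline N_i-\overline E_{i,m}$ with $\overline E_{i,m}\in\wh\Pic(X)_{\vert}$ and the metric of $\overline E_{i,m}$ tending to $0$ uniformly in $m$. A direct computation (using $\overline M_m=\sum_i a_{i,m}\overline N_i-\sum_i a_{i,m}\overline E_{i,m}$ and $r\cdot\tfrac{\epsilon}{2r}=\tfrac{\epsilon}{2}$) gives the identity
$$\overline M+\epsilon\overline A=\overline M_m+\sum_{i=1}^r\Big((a_i-a_{i,m})\overline N_i+\tfrac{\epsilon}{2r}\overline A\Big)+\Big(\sum_{i=1}^r a_{i,m}\overline E_{i,m}+\tfrac{\epsilon}{2}\overline A\Big).$$
I would then check that, for $m$ large, all three pieces are nef. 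The first is nef by hypothesis. For $m$ large enough that $|a_i-a_{i,m}|\le\epsilon/2r$ for every $i$, each summand $(a_i-a_{i,m})\overline N_i+\tfrac{\epsilon}{2r}\overline A$ is a nonnegative combination of the nef bundles $\overline A-\overline N_i$, $\overline A+\overline N_i$ and $\overline A$, hence nef. The last bracket has underlying line bundle the fixed ample class $\tfrac{\epsilon}{2}A$ and differs from the nef bundle $\tfrac{\epsilon}{2}\overline A$ only by the vertical bundle $\sum_i a_{i,m}\overline E_{i,m}$, whose metric is uniformly $o(1)$; so for $m$ large it lies arbitrarily close to $\tfrac{\epsilon}{2}\overline A$ in the uniform metric with the same underlying class, and is therefore nef. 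Choosing $m$ large for the given $\epsilon$ exhibits $\overline M+\epsilon\overline A$ as a sum of three nef bundles, completing the reduction.

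The main obstacle is precisely that the underlying line bundle of $\overline M_m$ is $M_m\ne M$, and that nefness of adelic line bundles is a $C^0$-closed but not $C^0$-open condition, so one cannot pass to the limit naively: the device above splits the error $\overline M-\overline M_m$ into a ``horizontal'' part, absorbed by the $\overline A$-boundedness arranged in the preliminary step, and a purely ``vertical'' part, absorbed by the uniform closure that defines nefness. Two minor points one should record: the set of places at which $\overline N_{i,m}$ differs from $\overline N_i$ must be finite uniformly in $m$ (true for the families produced by Tate's limiting argument, which is where this lemma is applied), and the reduction step is where the hypothesis that $M$ is ample genuinely enters, guaranteeing that the perturbed classes $M+\epsilon A$ remain ample so that $\overline M+\epsilon\overline A\to\overline M$ legitimately forces $\overline M$ into the nef locus.
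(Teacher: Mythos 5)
Your three-term decomposition of $\overline M+\epsilon\overline A$ is arithmetically correct, and the treatment of the first two summands is sound: $\overline M_m$ is nef by hypothesis, and the summands $(a_i-a_{i,m})\overline N_i+\tfrac{\epsilon}{2r}\overline A$ are genuinely nonnegative combinations of the nef bundles $\overline A\pm\overline N_i$ and $\overline A$ once you have enlarged $\overline A$ so that each $\overline N_i$ is $\overline A$-bounded with constant $1$ and shrunk $|a_i-a_{i,m}|$ below $\epsilon/2r$. The structure is therefore close in spirit to the paper's: both isolate $\overline M_m$, absorb the coefficient drift using boundedness, and then must control a vertical error. The problem is the last step.

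You assert that $\sum_i a_{i,m}\overline E_{i,m}+\tfrac{\epsilon}{2}\overline A$ is nef for $m$ large because it has the same underlying class as $\tfrac{\epsilon}{2}\overline A$ and is uniformly $o(1)$-close to it. This is precisely the $C^0$-openness of nefness that you correctly flagged as \emph{false} two sentences earlier. Nefness is defined as being a uniform limit of nef model metrics; a metric that is $C^0$-close to such a limit need not itself be such a limit. Concretely, perturbing a semipositive metric by $e^{-\phi}$ with $\phi$ small in $C^0$ but having large mixed second derivatives can make the Chern current fail to be semipositive, even though the underlying class and the $C^0$ distance to a nef metric are unchanged. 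The vertical error $\sum_i a_{i,m}\overline E_{i,m}$ here is an arbitrary integrable adelic metric on $\CO_X$ with no regularity or model structure, so no convexity or closure argument rescues the claim. The sequence of last brackets converges to the nef bundle $\tfrac{\epsilon}{2}\overline A$, which by $C^0$-closedness says the \emph{limit} is nef -- but you need each \emph{term} to be nef, which is the opposite implication.

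The paper closes exactly this gap by forcing the vertical error into a fixed finite-dimensional model picture. It replaces the $\overline N_i$ by model Hermitian line bundles $\CNN_i$ on one integral model $\CX$, chosen with strong regularity at infinity (extendability to a compact complex manifold $\Omega$, arithmetic ampleness of $\CMM$, positivity of its metric on $\Omega$), so that the perturbation $\CMM+(\epsilon_m^{-1}-1)(\CMM-\CMM_m)$ lives in the finite-dimensional real subspace $V\subset\wh\Pic(\CX)_\RR$ spanned by $\CMM,\CNN_1,\ldots,\CNN_r$. Openness of the ample cone inside $V$ in the Euclidean topology then follows from the arithmetic Nakai--Moishezon criterion, and choosing $\epsilon_m\to 0$ slowly enough relative to $\CMM-\CMM_m\to 0$ yields the required positivity of the correction term. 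Your argument has no analogue of this step: the bundles $\overline E_{i,m}$ carry the full nonmodel part of the metrics $\overline N_{i,m}$ and cannot be treated by a finite-dimensional ampleness-openness principle. Fixing the proof requires importing the paper's idea of anchoring the positivity reserve on a fixed model and invoking Nakai--Moishezon, rather than trying to absorb a general $C^0$-small vertical error into an ample bundle.

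There is also a secondary point to be careful about in the first reduction: letting $\epsilon\to 0$ in $\overline M+\epsilon\overline A$ changes the underlying class, and the $C^0$-closedness in the paper's definition of nefness is stated for metrics on a fixed line bundle. The paper sidesteps this entirely by constructing $\overline M_m'$ with the \emph{exact} underlying class $M$ (the two copies of $\sum_i a_{i,m}N_i$ cancel), so the final passage to the limit uses only the clean definitional closedness. If you keep your $\epsilon$-reduction, you should at least record how nefness of $\RR$-classes is being defined and why the nef cone is closed in the joint (Euclidean $\times$ uniform) topology; but the more serious issue is the one in the preceding paragraph.
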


\begin{proof}
Let $\overline M^0$ and $\overline N_i^0$ be any adelic line bundles extending $M$ and $N_i$, with some conditions we will impose later. Denote 
$$
\overline M_{m}^0= \sum_{i=1}^r  a_{i,m} \overline N_{i}^0.
$$
Let $\{\epsilon_m\}_m$ be a sequence in the interval $(0,1)$ convergent to 0. 
Consider
$$\overline M_m'= (1-\epsilon_m) \overline M_m -(1- \epsilon_m) \overline M_m^0+ 
\overline M^0.$$
Note that
$$\overline M_m'-\overline M= (1-\epsilon_m) (\overline M_m-\overline M_m^0) 
+(\overline M^0-\overline M).$$
We see that the underlying line bundle of $\overline M_m'$ is exactly $M$, 
and $\overline M_m'$ converges to $\overline M$ uniformly.
We are going to pick $\overline M^0, \overline N_i^0$ and $\epsilon_m$ so that  
$\overline M_m'$ is nef, which will prove the lemma. 
The conditions assume that $\overline M_m$ is nef. 
It suffices to make
$$
- (1-\epsilon_m) \overline M_m^0+ \overline M^0
=\epsilon_m \Big(\overline M^0+ (\epsilon_m^{-1}-1) (\overline M^0-\overline M_m^0)\Big)
$$
nef.

Let $\CX$ be an integral model of $X$. Let $\overline M^0$ and $\overline N_i^0$ be induced by Hermitian line bundles $\CMM$ and $\CNN_i$ on $\CX$. Denote
$$
\CMM_{m}= \sum_{i=1}^r  a_{i,m} \CNN_{i}.
$$
Assume 
$\CMM$ satisfies the following strong positivity conditions:
\begin{itemize}
\item $\CMM$ is ample in the arithmetic sense;
\item $\CM$ is ample in the geometric sense;
\item There is an embedding of $\CX(\CC)$ to a compact complex manifold 
$\Omega$, such that $\CM(\CC)$ can be extended to an ample line bundle on $\Omega$ and the metric of $\CMM$ can be extended to a positive (smooth) metric of the ample line bundle on $\Omega$. 
\end{itemize}
We further assume that the metric of $\CNN_i$ satisfies the regularity condition:
\begin{itemize}
\item For the same complex manifold $\Omega$ as above, the line bundle $\CNN_i(\CC)$ can be extended to a line bundle on $\Omega$ and the metric of $\CNN_i$ can be extended to a smooth metric of the line bundle on $\Omega$. 
\end{itemize}
These assumptions make sure that, 
in the (finite-dimensional) real vector subspace 
$V$ of $\wh\Pic(\CX)_{\RR}$ generated by $\CNN_1,\cdots, \CNN_r$ and $\CMM$, the subset of ample $\RR$-line bundles in $V$ form a neighborhood of $\CMM$. Here $V$ is endowed with the Euclidean topology. 
This is the open property of arithmetic ampleness and can be checked by the arithmetic Nakai--Moishezon criterion of Zhang \cite{Zh2}.

By definition, $\CMM-\CMM_m$ converges to $0$ in $V$. 
Hence, as long as $\epsilon_m$ converges to 0 much more slowly, the line bundle 
$$\CMM+ (\epsilon_m^{-1}-1) (\CMM-\CMM_m)$$
is ample on $\CX$. 
The proof is complete.

\end{proof}

\subsection{Preperiodic points}
The goal of this section is to prove Theorem
\ref{dynamicsmain}.

\subsubsection*{Canonical heights}

Let $(X,f,L)$ be a polarized dynamical system over a number field $K$ with $X$ normal.
Let $M$ be any line bundle on $X$. 
\emph{The canonical height} $h_{\overline M_f}: X(\overline K)\to \RR$ associated to $M$ is defined by 
$$
h_{\overline M_f}(x)= \frac{1}{\deg(x)} \wh\deg(\overline M_f|_{\tilde x}), \quad
x\in X(\overline K).
$$
Here $\tilde x$ denotes the closed point of $X$ representing $x$. 
The following are some basic properties:
\begin{itemize}
\item For any line bundle $M$, $h_{\overline M_f}(x)=0$ if $x$ is preperiodic.
\item 
If $M$ is ample, then $h_{\overline M_f}$ satisfies the Northcott property. 
\item 
If $M$ is ample and $f$-pure of weight 2, then $h_{\overline M_f}(x)\geq 0$ for any $x\in X(\overline K)$, since $\overline M_f$ is nef. 
\item For the polarization line bundle $L$, $h_{\overline L_f}(x)=0$ if and only if $x$ is preperiodic. It is an old result. 
\end{itemize}

More generally, we define \emph{the canonical height of a closed subvariety} 
$Y$ of $X$ associated to $M$ by 
$$
h_{\overline M_f}(Y):=\frac{1}{(\dim Y+1)\deg_M(Y)} \overline M^{\dim Y+1}.
$$
We still have the result that preperiodic subvarieties have canonical height 0. 
In fact, we have the following result. 
\begin{prop}
Let $Y$ be a preperiodic closed subvariety of $X$ of dimension $r$.
Let $M_1, M_2,\cdots, M_{r+1}$ be line bundles in $\Pic(X)_\CC$ which are $f$-pure of weight two. 
Then we have
$$
\overline M_{1,f}\cdot \overline M_{2,f}\cdots \overline M_{r+1,f}\cdot Y=0.
$$ 
\end{prop}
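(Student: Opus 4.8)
The plan is to reduce to the case where each $M_i$ is an eigenvector of $f^*$, convert the eigenvalue relation and the arithmetic projection formula into a recursion along the forward orbit of $Y$ under $f$, and then close up this orbit using preperiodicity.

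First, by the definition of ``$f$-pure of weight two'' and Theorem \ref{semisimplicity}, each $M_i$ lies in the span of the eigenvectors of $f^*$ with eigenvalue of absolute value $q$. Writing each $M_i$ as such a sum and using multilinearity of the intersection pairing on $\wh\Pic(X)_{\intb,[\CC]}$ (the admissible lift $M\mapsto\overline M_f$ being linear by the uniqueness in Theorem \ref{admisible metric}), I may assume $f^*M_i=\lambda_i M_i$ with $|\lambda_i|=q$ for each $i$. Then $\lambda_i^{-1}f^*\overline M_{i,f}$ is an $f$-admissible lift of $M_i$, so uniqueness in Theorem \ref{admisible metric} forces $f^*\overline M_{i,f}=\lambda_i\overline M_{i,f}$ in $\wh\Pic(X)_{[\CC]}$. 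Each $\overline M_{i,f}$ is integrable: taking real and imaginary parts, any weight-two $\RR$-class is a difference of two ample weight-two $\RR$-classes, whose admissible lifts are nef by Theorem \ref{positivity}. Finally, since $L$ is ample with $f^*L=qL$, the morphism $f$ is finite, so for an $r$-dimensional subvariety $Z$ of $X$ the restriction $f|_Z\colon Z\to f(Z)$ is a finite surjective morphism of $r$-dimensional varieties of some degree $d_Z=[\kappa(Z):\kappa(f(Z))]\in\ZZ_{>0}$.

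Write $I(Z):=\overline M_{1,f}\cdots\overline M_{r+1,f}\cdot Z$ for any $r$-dimensional subvariety $Z$ of $X$. Using $(f^*\overline M_{i,f})|_Y=(f|_Y)^*(\overline M_{i,f}|_{f(Y)})$ together with $f^*\overline M_{i,f}=\lambda_i\overline M_{i,f}$, and then applying the arithmetic projection formula to the finite map $f|_Y$, I get
$$\Bigl(\prod_{i=1}^{r+1}\lambda_i\Bigr)\,I(Y)\;=\;(f|_Y)^*(\overline M_{1,f}|_{f(Y)})\cdots(f|_Y)^*(\overline M_{r+1,f}|_{f(Y)})\cdot Y\;=\;d_Y\,I(f(Y)).$$
Since $\prod_i\lambda_i\neq0$ and $d_Y>0$, this shows $I(Y)=0$ if and only if $I(f(Y))=0$. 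Now choose integers $k<l$ with $f^k(Y)=f^l(Y)=:W$, put $g:=f^{l-k}$ (so $g(W)=W$ and $\dim W=r$), and set $Y_j=f^j(Y)$. Iterating the displayed identity around the loop $Y_k=Y_l$ gives
$$I(Y_k)\;=\;\frac{\bigl(\prod_{i=1}^{r+1}\lambda_i\bigr)^{\,l-k}}{\prod_{j=k}^{l-1}d_{Y_j}}\;I(Y_k),$$
where $\prod_{j=k}^{l-1}d_{Y_j}=\deg(g|_W\colon W\to W)$. Comparing $\bigl((g|_W)^*(L|_W)\bigr)^{r}=q^{(l-k)r}(L|_W)^{r}$ with the projection formula for $g|_W$, and using $(L|_W)^r>0$, gives $\deg(g|_W)=q^{(l-k)r}$. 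Hence the scalar multiplying $I(Y_k)$ has absolute value $q^{(r+1)(l-k)}/q^{r(l-k)}=q^{\,l-k}>1$, so $I(Y_k)=0$, and therefore $I(Y)=I(Y_0)=0$, as claimed.

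The step I expect to be the main obstacle is the bookkeeping required to make restriction to a subvariety, the pull-back $f^*$, and the arithmetic projection formula all rigorous at the level of $\wh\Pic(X)_{\intb,[\CC]}$ — a quotient of $\wh\Pic(X)\otimes_\ZZ\CC$ by a subspace of vertical classes — and, in particular, to verify the compatibility of the admissible lift $\overline M_{i,f}$ with $f^*$ and with restriction to $Y$. Once these functorialities are recorded, the numerical argument above is immediate.
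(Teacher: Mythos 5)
Your proof is correct and follows essentially the same route as the paper: reduce by multilinearity to eigenvectors $f^*M_i=\lambda_i M_i$ with $|\lambda_i|=q$, and play the eigenvalue product $|\lambda_1\cdots\lambda_{r+1}|=q^{r+1}$ against the degree $q^{r(l-k)}$ coming from the projection formula to force the intersection number to vanish. The only difference is presentational: the paper compresses the preperiodic-to-periodic reduction into one sentence and replaces $f$ by a power so that $f(Y)=Y$, whereas you unwind it as a one-step recursion $\bigl(\prod_i\lambda_i\bigr)I(Y)=d_Y\,I(f(Y))$ iterated around the orbit, together with an explicit verification that $\deg(g|_W)=q^{(l-k)r}$ — details the paper leaves implicit.
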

\begin{proof}
By the projection formula, we can assume that $Y$ is periodic. 
Replacing $f$ by a power if necessary, we can further assume that $f(Y)=Y$. 
By linearity, we can assume that $M_i$ is an eigenvector of $f^*$ for every $i$. 
Write $f^*M_i=\lambda_i M_i$. Then $f^*\overline M_{i,f}=\lambda_i \overline M_{i,f}$.
Then the projection formula gives
$$
f^*\overline M_{1,f}\cdot f^*\overline M_{2,f}\cdots f^* \overline M_{r+1,f}\cdot Y=
q^r\ \overline M_{1,f}\cdot \overline M_{2,f}\cdots \overline M_{r+1,f}\cdot Y.
$$ 
It follows that 
$$
(\lambda_1\cdots\lambda_{r+1}-q^r)\  \overline M_{1,f}\cdot \overline M_{2,f}\cdots \overline M_{r+1,f}\cdot Y=0.
$$ 
The result follows since $\lambda_1\cdots\lambda_{r+1}-q^r\neq 0$, as a consequence of the assumption $|\lambda_i|=q$.
\end{proof}

\begin{example}
The result is not true without the assumption that the line bundles are $f$-pure of weight two. Take $X$ to be an elliptic curve, and $M_1, M_2$ to be two line bundles of degree zero. Then they are $f$-pure of weight one if we take $f=[2]$. Then $\overline M_{1,f}\cdot \overline M_{2,f}=-2 \langle  M_1,M_2\rangle_{\rm NT}$ is often nonzero.  
\end{example}

\subsubsection*{Preperiodic points}

Now we are ready to prove the following theorem, which refines Theorem
\ref{dynamicsmain} in the case of number fields. The condition of $X$ being normal can be obtained by taking a normalization. 

\begin{thm} \label{dynamicsrefine}
Let $X$ be a normal projective variety over a number field $K$. For any $f,g\in \DS(X)$, the following are equivalent:
\begin{itemize}
\item[(1)] $\Prep(f)= \Prep(g)$;
\item[(2)] $g\Prep(f)\subset\Prep(f)$;
\item[(3)] $\Prep(f)\cap \Prep(g)$ is Zariski dense in $X$;
\item[(4)] $\wh\ell_f=\wh\ell_g$ as maps from $\NS(X)_\QQ$ to 
$\wh\Pic(X)_{\QQ}$.
\end{itemize}
\end{thm}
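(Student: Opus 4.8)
The plan is to establish the cycle of implications $(1)\Rightarrow(2)\Rightarrow(3)\Rightarrow(4)\Rightarrow(1)$, with $(3)\Rightarrow(4)$ being the substantial step in which the arithmetic Hodge index theorem enters. The implication $(1)\Rightarrow(2)$ is immediate, since $g\Prep(g)\subset\Prep(g)$ always holds. For $(2)\Rightarrow(3)$ I would in fact prove the stronger inclusion $\Prep(f)\subset\Prep(g)$: fix a polarization $L$ of $f$, so that $h_{\overline L_f}$ has the Northcott property and $\Prep(f)=\{x:h_{\overline L_f}(x)=0\}$. For $x\in\Prep(f)$ set $K'=K(x)$; since $g$ is defined over $K$, the forward orbit $\{g^k(x)\}_{k\ge0}$ lies in $X(K')$, and by $(2)$ it lies in $\Prep(f)$, which meets $X(K')$ in a finite set by Northcott. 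Hence the orbit is finite and $x\in\Prep(g)$; as $\Prep(f)$ is Zariski dense by Fakhruddin \cite{Fak}, this gives $(3)$. For $(4)\Rightarrow(1)$: with $L$ a polarization of $f$ and $\xi=[L]\in\NS(X)_\QQ$ (which is ample, and $\ell_f(\xi)=L$), we have $\wh\ell_f(\xi)=\overline L_f$, so by $(4)$ also $\wh\ell_g(\xi)=\overline L_f$; since $h_{\overline N_g}$ vanishes on $\Prep(g)$ for every line bundle $N$, taking $\overline N_g=\wh\ell_g(\xi)=\overline L_f$ shows $h_{\overline L_f}$ vanishes on $\Prep(g)$, whence $\Prep(g)\subset\Prep(f)$ because $h_{\overline L_f}(x)=0$ characterizes $\Prep(f)$. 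Exchanging $f$ and $g$ yields $(1)$.

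It remains to prove $(3)\Rightarrow(4)$. Since $\wh\ell_f$ and $\wh\ell_g$ are $\QQ$-linear and ample classes span $\NS(X)_\QQ$, it suffices to fix an ample $\xi\in\NS(X)_\QQ$ and show $\wh\ell_f(\xi)=\wh\ell_g(\xi)$. Write $\overline L_f=\wh\ell_f(\xi)$ and $\overline L_g=\wh\ell_g(\xi)$, with underlying bundles $L_f=\ell_f(\xi)$ and $L_g=\ell_g(\xi)$; these are ample (ampleness is numerical) and $L_f-L_g$ is numerically trivial. By Theorem \ref{positivity}, $\overline L_f$ and $\overline L_g$ are nef. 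By the basic properties of canonical heights, $h_{\overline L_f}$ and $h_{\overline L_g}$ vanish on $\Prep(f)$ and $\Prep(g)$ respectively, so $h_{\overline L_f+\overline L_g}$ vanishes on the Zariski dense set $\Prep(f)\cap\Prep(g)$. Thus $\overline L_f+\overline L_g$ is nef with ample underlying bundle and zero essential minimum, so the arithmetic Hilbert--Samuel formula \cite[Theorem 1.7]{Zh3} forces $(\overline L_f+\overline L_g)^{n+1}\le0$; combined with nefness this gives $(\overline L_f+\overline L_g)^{n+1}=0$. Expanding, all mixed intersection numbers $\overline L_f^{i}\cdot\overline L_g^{n+1-i}$ are nonnegative and sum to zero, hence each vanishes; in particular
$$(\overline L_f-\overline L_g)^2\cdot(\overline L_f+\overline L_g)^{n-1}=0.$$

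Now I apply the arithmetic Hodge index theorem (Theorem \ref{hodge}) with $\overline M=\overline L_f-\overline L_g$ and $\overline L_i=\overline L_f+\overline L_g$ for $i=1,\dots,n-1$: the constraint $M\cdot L_1\cdots L_{n-1}=0$ holds because $M=L_f-L_g$ is numerically trivial, each $L_i=L_f+L_g$ is ample, and $\overline M$ is $\overline L_i$-bounded, since for $m\ge1$ the classes $m(\overline L_f+\overline L_g)\pm(\overline L_f-\overline L_g)$ are nonnegative combinations of the nef bundles $\overline L_f,\overline L_g$. Since $M$ is numerically trivial, I may invoke the strengthened equality statement in which ``$\overline L_i$ ample'' is weakened to ``$L_i$ ample'', so the vanishing above forces $r(\overline L_f-\overline L_g)=\pi^*\overline M_0$ for some integer $r>0$ and some adelic line bundle $\overline M_0$ on $\Spec K$. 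Comparing underlying line bundles (that of $\pi^*\overline M_0$ is trivial) gives $L_f=L_g$ in $\Pic(X)_\QQ$, and $c:=\overline L_f-\overline L_g=\pi^*\overline M_0/r$ satisfies $g^*c=c$ because $\pi\circ g=\pi$. Hence $\overline L_f=\overline L_g+c$ is a sum of $g^*$-eigenvectors lifting $L_g$, i.e.\ a $g$-admissible lift of $L_g$; by the uniqueness in Theorem \ref{admisible metric} it must equal $\overline L_g$, so $c=0$ and $\wh\ell_f(\xi)=\wh\ell_g(\xi)$.

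I expect the main obstacle to be precisely the step $(3)\Rightarrow(4)$, and within it the passage from the \emph{equality} case of Theorem \ref{hodge} --- which only tells us that $\overline L_f-\overline L_g$ is a pullback from the base --- to the \emph{exact} coincidence of the two admissible liftings. Two points demand care: first, $\ell_f(\xi)$ and $\ell_g(\xi)$ need not agree a priori, which is why the Hodge index theorem must be applied to the numerically trivial difference $\overline L_f-\overline L_g$ and why the strengthened (numerically trivial) form of the equality statement is essential; second, the residual pullback term $c$ is eliminated only by combining the uniqueness of admissible liftings with the identity $\pi\circ g=\pi$, which makes $c$ a $g^*$-fixed vector.
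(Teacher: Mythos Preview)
Your cycle $(1)\Rightarrow(2)\Rightarrow(3)\Rightarrow(4)\Rightarrow(1)$ matches the paper's, and the arguments for $(1)\Rightarrow(2)$, $(2)\Rightarrow(3)$, $(4)\Rightarrow(1)$ are essentially those of the paper (your $(2)\Rightarrow(3)$ is phrased pointwise rather than via the sets $\Prep(f,d)$, but this is the same Northcott argument). In $(3)\Rightarrow(4)$ you also follow the paper up to and including the conclusion $\overline L_f-\overline L_g\in\pi^*\wh\Pic(K)_\QQ$; the citation should be to Zhang's inequality on successive minima rather than ``Hilbert--Samuel'', but the logic is correct.

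The one substantive divergence is the very last step, killing the constant $c=\overline L_f-\overline L_g\in\pi^*\wh\Pic(K)_\QQ$, and here your argument has a gap. You observe $g^*c=c$ and conclude that $\overline L_g+c$ is a sum of $g^*$-eigenvectors, hence $g$-admissible, hence equal to $\overline L_g$ by the uniqueness in Theorem~\ref{admisible metric}. The problem is that $c$ is a $g^*$-eigenvector of eigenvalue $1$, which is \emph{not} among the eigenvalues of $g^*$ on $\Pic(X)_\CC$ (those have absolute value $q_g^{1/2}$ or $q_g$). If the definition of ``admissible'' is read literally (any eigenvalue allowed), then uniqueness in Theorem~\ref{admisible metric} fails precisely because one can always add such a $c$; if it is read as intended (eigenvalues drawn from $\Pic(X)_\CC$), then $\overline L_g+c$ is \emph{not} $g$-admissible and you cannot invoke uniqueness. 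Either way the argument does not close.

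The paper handles this step differently: it restricts to a point $x\in\Prep(f)\cap\Prep(g)$ and uses that $\overline L_f|_{\tilde x}=0$ and $\overline M_g|_{\tilde x}=0$ in $\wh\Pic(\tilde x)_\QQ$ (the full restriction, not merely $\wh\deg=0$). This holds because each eigencomponent of $\overline L_f$ has eigenvalue of absolute value $q_f>1$, while pullback by the automorphism $f^p|_{\tilde x}$ (for $x$ periodic, then pass to preperiodic) has finite order on $\wh\Pic(\tilde x)_\QQ$. Then $\pi_x^*\,\overline c=0$, and since $\pi_x^*:\wh\Pic(K)_\QQ\to\wh\Pic(K(x))_\QQ$ is injective (finite kernel before tensoring with $\QQ$), one gets $\overline c=0$.
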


We will prove (1) $\Rightarrow$ (2) $\Rightarrow$ (3) $\Rightarrow$ 
(4)$\Rightarrow$ (1).
We start the proof with some easy directions.

First,  (1) $\Rightarrow$ (2) it trivial. 

Second,  (2) $\Rightarrow$ (3).
For any integer $d>0$, denote
$$\Prep (f,d):=\{x\in \Prep(f)\ |\ \deg(x)<d \}.$$
By Northcott's property, $\Prep (f,d)$ is a finite set since its points have
trivial canonical heights.
Assuming (2), then $g$ stabilizes the set $\Prep (f)$.
By definition,
$$\Prep (f)=\bigcup_{d>0} \Prep (f, d).$$
Since $g$ is also defined over $K$, it stabilizes the set $\Prep (f,d)$.
By the finiteness, we obtain that
$$\Prep (f,d)\subset \Prep (g), \ \forall d.$$
Hence,
$$\Prep (f)\subset \Prep (g).$$
Then (3) is true since $\Prep (f)$ is Zariski dense in $X$
by the result of Fakhruddin \cite{Fak}.

Third, we prove (4) $\Rightarrow$ (1).
Let $L$ be an ample line bundle on $X$ polarizing $f$. 
By (4), $\overline L_f=\overline L_g$. 
For any $x\in \Prep(g)$, we have $h_{\overline L_f}(x)=h_{\overline L_g}(x)=0$. 
It follows that $x\in \Prep(f)$. 
This proves $\Prep(g)\subset \Prep(f)$. 
By symmetry, we have the other direction and thus the equality.

\subsubsection*{From (3) to (4)}
Assume that $\Prep (f)\cap\Prep (g)$ is Zariski dense in $X$. 
Write $n=\dim X$. 
We need to prove $\wh\ell_f(\xi)=\wh\ell_g(\xi)$ for any $\xi\in\NS(X)_\QQ$. 
By linearity, it suffices to assume that $\xi$ is ample. 
Denote $L=\ell_f(\xi)$ and $M=\ell_g(\xi)$. They are ample $\QQ$-line bundles on $X$. Then $\overline L_f=\wh\ell_f(\xi)$ and $\overline M_g=\wh\ell_g(\xi)$ are nef by Theorem \ref{positivity}. 

Consider the sum $\overline N=\overline L_f+\overline M_g$, which is still nef.
By the successive minima of Zhang \cite{Zh3}, 
$$
\lambda_1(X,\overline N) \geq  h_{\overline N}(X) \geq 0.
$$
Here 
$$h_{\overline N}(X)=\frac{1}{(n+1)\deg_N(X)} \overline N^{n+1}
$$
and the essential minimum
$$
\lambda_1(X,\overline N)= \sup_{U\subset X} \inf_{x\in U(\overline K)}  h_{\overline N}(x),
$$
where the supremum is taken over all Zariski open subsets $U$ of $X$. 
Note that $h_{\overline N}$ is zero on
$\Prep (f)\cap\Prep (g)$, which is assumed to be Zariski dense in $X$.
Hence, $\lambda_1(X,\overline N)=0$. 
It forces $h_{\overline N}(X) = 0.$

Write in terms of intersections, we have 
$$
(\overline L_f+\overline M_g)^{n+1}=0.
$$
Expand by the binomial formula. 
Note that every term is non-negative. 
It follows that 
$$
\overline L_f^{i}\cdot\overline M_g^{n+1-i}=0, \quad\forall
i=0,1,\cdots, n+1.
$$
It particularly gives
$$
(\overline L_f-\overline M_g)^2 \cdot
(\overline L_f+\overline M_g)^{n-1}=0.
$$

Note that 
$$(L-M)\cdot (L+M)^{n-1}=0$$
since $L-M\in \Pic^0(X)_\QQ$ is numerically trivial. 
We are in the situation to apply Theorem \ref{hodge} to 
$$(\overline L_f-\overline M_g,\ \overline L_f+\overline M_g).
$$ 
It is immediate that $(\overline L_f-\overline M_g)$ is $(\overline L_f+\overline M_g)$-bounded.
The only condition that does not match the theorem is that $(\overline L_f+\overline M_g)$ is not ample. 
However, since $L-M$ is numerically trivial, 
as in the remark after the theorem,
we can take any $\overline C\in \wh\Pic(K)$ with $\deg(\overline C)>0$, and replace 
$$
(\overline L_f-\overline M_g,\ \overline L_f+\overline M_g)
$$
by 
$$
(\overline L_f-\overline M_g, \ \overline L_f+\overline M_g+\pi^*\overline C).
$$
Then all the conditions are satisfied. 
The theorem implies that 
$$
\overline L_f-\overline M_g \in \pi^*\wh\Pic(K).
$$

By evaluating at any point $x$ in $\Prep (f)\cap\Prep (g)$,
we see that 
$$
\overline L_f-\overline M_g =0
$$
in $\wh\Pic(X)_{\QQ}$. 
Here we have used the restriction $\overline L_f|_x=0$, which is more delicate than
$\wh\deg(\overline L_f|_x)=0$.
It finishes the proof.

\subsection{Variants and questions}

Now we consider some variants, consequences and questions related to Theorem \ref{dynamicsmain}.

\subsubsection*{Variants}

In a private communication, Barry Mazur points out that one direction of Theorem
\ref{dynamicsmain} can be generalized as follows:

\begin{thm}\label{dynamicssub}
Let $X$ be a projective variety over a number field $K$.
Let $f,g\in \DS(X)$, and denote by $Y$ the Zariski closure of $\Prep(f)\cap
\Prep(g)$ in $X$.
Then
$$\Prep(f)\cap Y(\overline K)= \Prep(g) \cap Y(\overline K).$$
\end{thm}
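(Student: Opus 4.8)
The plan is to reduce the statement to the already-proved equivalence in Theorem \ref{dynamicsrefine} applied on the subvariety $Y$, after dealing carefully with the fact that $Y$ need not be irreducible and that $f,g$ need not preserve $Y$. First I would show that $Y$ is preperiodic under both $f$ and $g$: since $\Prep(f)\cap\Prep(g)$ is an $f$-stable and $g$-stable set, its Zariski closure $Y$ is stable under $f$ and $g$ as well (closures of invariant sets are invariant for morphisms), so some iterate of $f$ maps each irreducible component of $Y$ to a component, and likewise for $g$; replacing $f,g$ by suitable iterates (which changes neither $\Prep(f)$ nor $\Prep(g)$ nor the conclusion) I may assume $f(Y)=Y=g(Y)$ componentwise, and in fact reduce to a single irreducible component $Y_0$ fixed by both, noting $\Prep(f|_{Y_0})=\Prep(f)\cap Y_0(\overline K)$ and similarly for $g$. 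Restricting the polarizations $L$ (for $f$) and any polarization of $g$ to $Y_0$ keeps them ample, so $f|_{Y_0},g|_{Y_0}\in\DS(Y_0)$, possibly after passing to the normalization of $Y_0$ (pulling back preperiodic points along the finite normalization map).

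Next I would verify the hypothesis (3) of Theorem \ref{dynamicsrefine} on $Y_0$: by construction $\Prep(f|_{Y_0})\cap\Prep(g|_{Y_0})=\bigl(\Prep(f)\cap\Prep(g)\bigr)\cap Y_0(\overline K)$, and since $Y$ is by definition the Zariski closure of $\Prep(f)\cap\Prep(g)$, this intersection is Zariski dense in $Y_0$ (this uses that $Y_0$ is one of the finitely many components of $Y$, so the dense set cannot avoid it — here one may need to further iterate so that the dense subset meeting $Y_0$ is itself $f,g$-stable, which is harmless). Then Theorem \ref{dynamicsrefine}, applied with $X$ replaced by (the normalization of) $Y_0$, gives $\Prep(f|_{Y_0})=\Prep(g|_{Y_0})$, i.e. $\Prep(f)\cap Y_0(\overline K)=\Prep(g)\cap Y_0(\overline K)$. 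Running over all components of $Y$ and pushing forward along the normalization maps yields $\Prep(f)\cap Y(\overline K)=\Prep(g)\cap Y(\overline K)$.

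The main obstacle I anticipate is the bookkeeping around reducibility and the lack of an a priori common invariant structure: one must ensure that after all the iteration and component-by-component reductions the Zariski-density hypothesis genuinely survives on each piece, and that restricting a polarized dynamical system to a closed (normalized) subvariety stays polarized — the ampleness restricts fine, but one should check the eigenvalue/weight bookkeeping underlying $\wh\ell_f$ behaves well under restriction, or else simply invoke the equivalence (3)$\Leftrightarrow$(1) of Theorem \ref{dynamicsrefine} as a black box on $Y_0$ without re-deriving it. A secondary technical point is that $Y$ as defined is only a reduced closed subscheme, so ``preperiodic subvariety'' should be interpreted componentwise, and the passage to iterates must be uniform enough to fix all components simultaneously; this is routine since there are finitely many components. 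No new input beyond Theorem \ref{dynamicsrefine} and the elementary stability of closures under morphisms is needed.
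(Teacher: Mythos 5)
Your first step already contains a gap that the rest of the argument cannot recover from. You assert that $\Prep(f)\cap\Prep(g)$ is both $f$-stable and $g$-stable, and deduce that its Zariski closure $Y$ is preperiodic under $f$ and under $g$. But this stability is false in general: $\Prep(f)$ is $f$-stable and $\Prep(g)$ is $g$-stable, yet if $x\in\Prep(f)\cap\Prep(g)$ there is no reason for $f(x)$ to be $g$-preperiodic (and symmetrically), since $f$ and $g$ need not commute and need not share preperiodic sets outside of $Y$. Establishing that $Y$ is invariant would essentially require the conclusion one is trying to prove, or a dynamical Manin--Mumford type input. Without this, you cannot restrict $f$ and $g$ to $Y$ (or to a component $Y_0$), so Theorem \ref{dynamicsrefine} does not apply on $Y_0$ as you propose, and the entire reduction collapses.

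The paper's proof avoids this obstacle by a different mechanism: it does \emph{not} restrict the dynamical systems to $Y$ at all. Instead it restricts only the $f$-admissible and $g$-admissible adelic line bundles $\wh\ell_f(\xi)$ and $\wh\ell_g(\xi)$ (defined on the ambient $X$) to $Y$. These restrictions are still nef adelic line bundles with ample underlying line bundle on $Y$, their heights vanish on the Zariski-dense subset $\Prep(f)\cap\Prep(g)$ of $Y$, so the successive-minima argument gives $\bigl(\wh\ell_f(\xi)|_Y+\wh\ell_g(\xi)|_Y\bigr)^{\dim Y+1}=0$; then Theorem \ref{hodge} applied on (the normalization of each component of) $Y$ forces $\wh\ell_f(\xi)|_Y=\wh\ell_g(\xi)|_Y$. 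Comparing canonical heights on $Y$ then gives the equality of preperiodic sets. The crucial point is that Theorem \ref{hodge} is a statement about adelic line bundles on a projective variety and makes no reference to any dynamical system living on that variety, so no invariance of $Y$ is needed. You would need to replace your opening reduction with this restriction-of-metrics argument for the proof to go through.
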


The proof of Theorem \ref{dynamicsmain} applies here. In fact, we can always restrict $f$-admissible (or $g$-admissible) adelic line bundles from $X$ to $Y$. Then we apply Theorem \ref{hodge} on $Y$.

One consequence of Theorem \ref{dynamicsrefine} is the following result. 

\begin{cor}\label{dynamicslocal}
Let $X$ be a projective variety over a number field $K$, and $f,g\in \DS(X)$ be two polarizable algebraic dynamical systems.
If $\Prep(f)\cap \Prep(g)$ is Zariski dense in $X$, then $d\mu_{f,v}=d\mu_{g,v}$ on $X_{K_v}^\an$ for any place $v$ of $K$.
\end{cor}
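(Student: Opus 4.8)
The plan is to read the corollary off from the equivalence (3)\,$\Leftrightarrow$\,(4) of Theorem \ref{dynamicsrefine}. First, after replacing $X$ by its normalization and $f,g$ by their functorial lifts --- which preserves the Zariski density hypothesis, and for which pushing forward along the finite surjective map of Berkovich spaces recovers the statement over $X$ --- we may assume $X$ is normal, so Theorem \ref{dynamicsrefine} applies. The density hypothesis is condition (3), hence we obtain (4): $\wh\ell_f=\wh\ell_g$ as maps $\NS(X)_\QQ\to\wh\Pic(X)_\QQ$; composing with the forgetful map $\wh\Pic(X)_\QQ\to\Pic(X)_\QQ$ also gives $\ell_f=\ell_g$.

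Next, fix any ample class $\xi\in\NS(X)_\QQ$ and put $N:=\ell_f(\xi)=\ell_g(\xi)$, an ample $\QQ$-line bundle, and $\overline N:=\wh\ell_f(\xi)=\wh\ell_g(\xi)$, a nef adelic $\QQ$-line bundle extending $N$ by Theorem \ref{positivity}. The corollary then reduces to the identity
$$
d\mu_{f,v}=\frac{1}{N^{\,n}}\,c_1(\overline N)_v^{\,n}=d\mu_{g,v},\qquad n=\dim X,
$$
where the middle object is manifestly the same for $f$ and $g$, so only the outer two equalities require proof. If $d\mu_{f,v}$ is taken to be defined by this formula for an arbitrary ample $\xi$, it remains only to see that the right-hand side is independent of $\xi$. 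If instead $d\mu_{f,v}$ is defined through an honest polarization $L$ of $f$, one observes that such an $L$ is automatically $f$-pure of weight two, so $\overline L_f=\wh\ell_f([L])$, and the claim again becomes the same independence statement, applied to the classes $[L]$ and $\xi$.

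The substantive point --- and the expected main obstacle --- is precisely this independence: for a fixed polarizable $f$ the normalized top curvature measure $\tfrac{1}{N^n}c_1(\wh\ell_f(\xi))_v^n$ is one and the same probability measure for every ample $\xi\in\NS(X)_\QQ$, namely the intrinsic equilibrium measure of $(X,f)$ at $v$. Each of these measures is the top self-intersection of a nef adelic line bundle whose underlying class is $f$-pure of weight two, and one shows it is fixed by the operator $(\deg f)^{-1}f^*$ on measures: writing $\overline N=\sum_i\overline N_i$ with $f^*\overline N_i=\lambda_i\overline N_i$ and $|\lambda_i|=q$ (Theorem \ref{semisimplicity}, together with $\deg f=q^{\dim X}$), the ``unbalanced'' mixed-curvature terms $c_1(\overline N_{i_1})_v\cdots c_1(\overline N_{i_n})_v$ with $\lambda_{i_1}\cdots\lambda_{i_n}\neq q^{\,n}$ must drop out, in parallel with the vanishing in the Proposition preceding the elliptic-curve example. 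Uniqueness of a probability measure invariant under $(\deg f)^{-1}f^*$ --- the measure-theoretic shadow of the Calabi--Yau statements (Theorems \ref{calabiyau} and \ref{calabiyauvariant}), and part of the standard theory of canonical measures of Chambert-Loir and Gubler --- then pins all of them down to $d\mu_{f,v}$, and the corollary follows at once from the equality of $\overline N$ for $f$ and for $g$.
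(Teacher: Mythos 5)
Your overall route---deduce the corollary from the implication (3) $\Rightarrow$ (4) of Theorem \ref{dynamicsrefine}---is one of the two proofs the paper indicates (the other, which the paper also sketches, applies Yuan's equidistribution theorem to a generic sequence in $\Prep(f)\cap\Prep(g)$ and never needs $\wh\ell_f=\wh\ell_g$). But the step you yourself flag as the substantive one is where your argument breaks. You justify the independence of $\frac{1}{N^n}c_1(\wh\ell_f(\xi))_v^n$ from the ample class $\xi$ by (a) asserting that the unbalanced terms $c_1(\overline N_{i_1,f})_v\cdots c_1(\overline N_{i_n,f})_v$ with $\lambda_{i_1}\cdots\lambda_{i_n}\neq q^n$ vanish, and (b) invoking uniqueness of probability measures $\mu$ with $f^*\mu=(\deg f)\,\mu$. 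Neither holds as stated. For (a), the proposition you cite kills a global intersection number against a preperiodic cycle via the projection formula; the analogous local computation only yields $f_*\mu=q^n(\lambda_{i_1}\cdots\lambda_{i_n})^{-1}\mu$ and $\int\mu=0$, which does not force the complex measure $\mu$ to vanish. For (b), the uniqueness claim is simply false: for $z\mapsto z^2$ on $\BP^1$ the Dirac mass at $0$ satisfies $f^*\delta_0=2\delta_0$, so $(\deg f)^{-1}f^*$-invariant probability measures abound; and restricting attention to Monge--Amp\`ere measures of semipositive metrics is not covered by Theorem \ref{calabiyau}, which compares two metrics on the \emph{same} line bundle.

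The repair is to use the characterization of $d\mu_{f,v}$ that the paper takes as its definition: it is the limit of $(\deg f)^{-m}(f^m)^*\mu_0$ for \emph{any} initial smooth measure $\mu_0$, independently of $\mu_0$. Since $\wh\ell_f(\xi)$ is constructed (proof of Theorem \ref{positivity}) as a limit along a subsequence of $(q^{-1}f^*)^{m_k}\overline N_0$ for a model metric $\overline N_0$ on $N=\ell_f(\xi)$, and Chambert-Loir measures are continuous under uniform convergence of metrics, one gets directly that $\frac{1}{N^n}c_1(\wh\ell_f(\xi))_v^n$ is the Tate limit of the normalized pullbacks of the smooth measure $c_1(\overline N_0)_v^n$, hence equals $d\mu_{f,v}$; no invariance-plus-uniqueness detour is needed. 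With that replacement (your normalization reduction at the outset is fine), the rest of your argument coincides with the paper's first route; alternatively, the equidistribution route avoids the issue entirely.
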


Here $d\mu_{f,v}$ denotes the equilibrium measure of $(X, f)$ on the analytic space $X_{K_v}^\an$. It can be obtained from any initial ``smooth'' measure on $X_{K_v}^\an$ by Tate's limiting argument. By a proper interpretation, it satisfies $f^*d\mu_{f,v}=q^{\dim X}d\mu_{f,v}$ and $f_*d\mu_{f,v}=d\mu_{f,v}$.

On can deduce the theorem by the equivalent condition $\wh\lambda_f=\wh\lambda_g$ in Theorem \ref{dynamicsrefine}. Alternatively, one can apply the equidistribution theorem of Yuan \cite{Yu} to any generic sequence in 
$\Prep(f)\cap \Prep(g)$ to obtain the result.

\subsubsection*{Semigroup}

For any subset $P$ of $X(\overline K)$, denote
$$
\DS(X,P):=\{g\in \DS(X)\ |\ \Prep(g)=P \}.
$$
We say that that $P$ is a \emph{special set of $X$} if $\DS(P)$ is non-empty.

\begin{question} 
Let $P$ be a special set of $X$.
Is the set $\DS(X,P)$ is a semigroup?
\end{question}

The question asks whether $g\circ h\in \DS(X,P)$ for any $g,h\in\DS(X,P)$.
By Theorem \ref{dynamicsmain}, we can write:
\begin{eqnarray*}
\DS(X,P)= \{g\in \DS(X)\ |\ gP\subset P \}.
\end{eqnarray*}
Then $P\subset \Prep(g\circ h)$ by
the simple argument proving (2) $\Rightarrow$ (3) of Theorem \ref{dynamicsmain}. 
Then we have $g\circ h\in\DS(X,P)$ if $g\circ h$ is polarizable. 

The polarizability is automatically true if $X$ is a projective space. 
Therefore, if $X=\BP^n$, then $\DS(X,P)$ is a semigroup.

The results and the questions also apply to general fields by the treatment of \cite{YZ}.

\subsubsection*{Dynamical Manin--Mumford}

The second author of this paper proposed in \cite{Zh3} a dynamical analogue of
the classical Manin--Mumford conjecture for abelian varieties. Namely, given a
dynamical triple $(X, L, f)$ over a field $K$ of characteristic zero,
a closed subvariety $Y$ of $X$ is preperiodic under $f$ if and only if
the set $Y(\overline K)\cap \Prep (f)$ is Zariski dense in $Y$.
Recently, Ghioca and Tucker found the following counter-example of this
conjecture:

\begin{prop}[Ghocia and Tucker, \cite{GTZ}]
 Let $E$ be an elliptic curve with complex multiplication by  an order $R$ in
imaginary quadratic field $K$.  Let $f$ be an endomorphism on  $E\times E$
defined by multiplications by two nonzero elements $\alpha$ and $\beta$  in $R$
with equal norm $\RN (\alpha)=\RN (\beta)$. Then $f$ is polarized by any
symmetric and ample line bundle
 $$\Prep (f)=E_{\tor}\times E_{\tor}.$$
 Moreover, the diagonal $\Delta _E$ in $E\times E$ is not preperiodic under $f$
if $\alpha/\beta$ is not a root of unity,
 \end{prop}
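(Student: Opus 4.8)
The plan is to verify the three assertions by direct computation with the isogenies $[\alpha],[\beta]\in\End(E)$ and the norm $\RN$ of the CM order $R\subset K$. Throughout set $q:=\RN(\alpha)=\RN(\beta)$, write $p_1,p_2\colon E\times E\to E$ for the projections, $f=[\alpha]\times[\beta]$ (so $f(x,y)=(\alpha x,\beta y)$), and $\Delta:=\Delta_E$. First I would note that the hypothesis ``$\alpha/\beta$ not a root of unity'' already forces $q\geq 2$, since the units of $R$ are roots of unity and $q=1$ would make $\alpha,\beta$ units; for the statement about $\Prep(f)$ the relevant case is likewise $q\geq 2$ (when $q=1$ the map $f$ has finite order and everything is preperiodic). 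For polarizability, fix a symmetric ample line bundle $L_0$ on $E$ and put $L=p_1^*L_0+p_2^*L_0$, which is ample and symmetric. For $\gamma\in R$ one has $[\gamma]^*L_0=\RN(\gamma)\,L_0$ in $\NS(E)$ (the group $\NS(E)\cong\ZZ$ is generated by $L_0$ and $[\gamma]^*$ acts by $\deg[\gamma]=\RN(\gamma)$, e.g.\ $\deg[n]=n^2$), and since both $[\gamma]^*L_0$ and $\RN(\gamma)L_0$ are symmetric they differ by a $2$-torsion class in $\Pic^0(E)$, hence agree in $\Pic(E)\otimes\QQ$. So $f^*L=q\,p_1^*L_0+q\,p_2^*L_0=qL$ in $\Pic(E\times E)\otimes\QQ$, and $(E\times E,f,L)$ is a polarized dynamical system (the same works for any product of symmetric ample bundles).

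Next I would compute $\Prep(f)$. Since $f^k=[\alpha]^k\times[\beta]^k$, a pair $(x,y)$ has finite forward $f$-orbit iff $x$ has finite forward $[\alpha]$-orbit and $y$ has finite forward $[\beta]$-orbit, so $\Prep(f)=\Prep([\alpha])\times\Prep([\beta])$. Then I would check $\Prep([\alpha])=E_{\tor}$: a torsion point $x\in E[N]$ is preperiodic because $[\alpha]$ maps the finite set $E[N]$ into itself, and conversely $[\alpha]^m x=[\alpha]^n x$ with $m>n$ puts $x$ in $\ker[\alpha^m-\alpha^n]$, where $\alpha^m-\alpha^n\neq 0$ because $\RN(\alpha^m)=q^m\neq q^n=\RN(\alpha^n)$, so $[\alpha^m-\alpha^n]$ is a nonzero isogeny with finite kernel. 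The same holds for $[\beta]$, giving $\Prep(f)=E_{\tor}\times E_{\tor}$.

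For the diagonal I would first compute, by iteration, $f^k(\Delta)=\{(\alpha^k x,\beta^k x):x\in E\}$, which is the image of the homomorphism $([\alpha^k],[\beta^k])\colon E\to E\times E$ and hence an elliptic curve through the origin $O$. If $\Delta$ were preperiodic, say $f^m(\Delta)=f^n(\Delta)=:B$ with $m>n\geq 0$, then I would argue as follows: $f^n$ is an isogeny of $E\times E$, so the preimage $(f^n)^{-1}(B)$ is a closed subgroup of dimension $\dim B=1$; both $\Delta$ and $f^{m-n}(\Delta)$ are connected one-dimensional abelian subvarieties through $O$ contained in $(f^n)^{-1}(B)$ (the latter because $f^n(f^{m-n}(\Delta))=f^m(\Delta)=B$), so each equals the identity component $(f^n)^{-1}(B)^0$, whence $f^{m-n}(\Delta)=\Delta$. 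Setting $j=m-n\geq 1$, the equality $\{(\alpha^j x,\beta^j x):x\in E\}=\{(y,y):y\in E\}$ forces $\alpha^j x=\beta^j x$ for all $x$, i.e.\ $[\alpha^j-\beta^j]=0$ in $\End(E)$, i.e.\ $\alpha^j=\beta^j$ in $R\subset K$, i.e.\ $(\alpha/\beta)^j=1$, contradicting the hypothesis. So $\Delta_E$ is not preperiodic; together with the description of $\Prep(f)$ this gives the stated counterexample, since $\Delta_E(\overline K)\cap\Prep(f)$ is the (Zariski dense) set of torsion points of $\Delta_E\cong E$.

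I expect the main obstacle to be the reduction of preperiodicity of $\Delta_E$ to periodicity, i.e.\ identifying $\Delta$ and $f^{m-n}(\Delta)$ with the identity component of $(f^n)^{-1}(B)$ via the dimension count; everything else is a routine unwinding of definitions and CM norms. An alternative for that step is to classify the one-dimensional abelian subvarieties of $E\times E$ surjecting onto both factors by their ``slope'' $[\gamma:\delta]\in\BP^1(K)$; then the slope of $f^k(\Delta)$ is $[\alpha^k:\beta^k]=[1:(\beta/\alpha)^k]$, and these are pairwise distinct precisely when $\beta/\alpha$ is not a root of unity.
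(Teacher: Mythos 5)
The paper itself offers no proof of this proposition: it is quoted from the Ghioca--Tucker preprint \cite{GTZ} purely as a counterexample to the earlier dynamical Manin--Mumford conjecture, so there is no in-paper argument to compare against. Your self-contained verification is correct. The decomposition $\Prep(f)=\Prep([\alpha])\times\Prep([\beta])$ together with the finiteness of $\ker[\alpha^m-\alpha^n]$ (valid once $q\ge 2$, which you rightly note is forced both by the root-of-unity hypothesis and by the paper's requirement $q>1$ in the definition of a polarized system) gives $\Prep(f)=E_{\tor}\times E_{\tor}$; and your reduction of preperiodicity of $\Delta_E$ to periodicity, by identifying both $\Delta_E$ and $f^{m-n}(\Delta_E)$ with the identity component of the one-dimensional subgroup $(f^n)^{-1}(B)$, is sound, after which $f^j(\Delta_E)=\Delta_E$ forces $\alpha^j=\beta^j$ in $R\hookrightarrow\End(E)$, contradicting the hypothesis. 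One caveat: you establish polarizability only for the product bundles $p_1^*L_0+p_2^*L_0$, whereas the proposition as printed asserts that \emph{any} symmetric ample line bundle polarizes $f$. The literal claim is in fact false when $\alpha\neq\beta$: identifying $\NS(E\times E)_\QQ$ with Hermitian matrices $H=\left(\begin{smallmatrix} a & c\\ \bar c & b\end{smallmatrix}\right)$ over $K$ via the Rosati involution of the product polarization, $f^*$ acts by $H\mapsto \mathrm{diag}(\bar\alpha,\bar\beta)\,H\,\mathrm{diag}(\alpha,\beta)$, whose off-diagonal entry $\bar\alpha\beta c$ equals $qc$ only when $c=0$ or $\alpha=\beta$; so only product-type classes are eigenvectors, and a non-product symmetric ample class such as the one with $a=b=2$, $c=1$ is not scaled by $q$. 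Since the paper's notion of polarizability requires only one ample eigen-bundle, the weaker statement you prove is exactly what the application needs; this is a defect of the quoted statement rather than of your argument.
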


 Notice that the diagonal is preperiodic for multiplication by $(2,2)$.
 Thus the  proposition shows {\em an example that two endomorphisms of a
projective variety, with the same
set of preperiodic points, have different sets of preperiodic subvarieties.}  We
would like to propose the following revision of the dynamical Manin--Mumford
conjecture:

\begin{question}
Let $X$ be a projective variety over any field $K$, and $P$ be a special set of $X$. Let $Y$ be a proper
closed subvariety of $X$ such that $Y(\overline K)\cap P$ is Zariski dense in $Y$. Do there exist two endomorphisms $f,g\in\DS(X, P)$, and a proper $g$-periodic closed
subvariety $Z$ such that $f(Y)\subset Z$?
\end{question}

If the answers to both questions are positive, then in the situation of the second question, we can find a finite sequence of
subvarieties
$$X:=X_0\supset X_1\supset X_2\cdots \supset X_s$$
and endomorphisms $f_i, g_i\in \DS (X_i, P\cap X_i(\overline K))$ for
$i=0,1,\cdots, s$ such that
\begin{enumerate}
\item[(1)] $X_i$ is $g_i$-periodic, which implies that $P\cap X_i(\overline K)$
is a special set of $X_i$;

\item[(2)] $f_i\circ f_{i-1}\cdots f_0 (Y)\subset X_{i+1}$ for any
$i=0,1,\cdots, s-1$;

\item[(3)] $f_{s-1}\circ f_{s-2}\cdots f_0 (Y)= X_s$.
\end{enumerate}

\appendix

\section{Lefschetz theorems for Normal Varieties}

We list some classical Lefschetz-type results applicable to normal projective varieties over any characteristic. 
Their counterparts for complex projective manifolds are even more classical, and we refer  them to \cite[Chapter 3]{La}. 

Let $X$ be a projective variety of dimension $n\geq 2$ over an algebraically closed field $k$. Consider the exact sequence 
$$
0\lra \Pic^0(X) \lra \Pic(X)\lra  \NS(X)\lra 0.
$$
Here $\Pic^0(X)$ denotes the subgroup of algebraically trivial line bundles, and 
$\NS(X)$ denotes the quotient group. 

Recall that a line bundle $L$ on $X$ is numerically trivial if $L\cdot C=0$ for any closed curve $C$ in $X$.  
It is well-known that a line bundle $L$ is numerically trivial if and only if the multiple $mL$ is algebraically trivial for some nonzero integer $m$. See \cite[Theorem 6.3]{Kl}.

\begin{thm}\label{lefschetz2}
Let $L_1,\cdots, L_{n-1}$ be ample line bundles on $X$. 
For any $M\in \Pic(X)$ with $M\cdot L_1\cdots L_{n-1}=0$, one has
$$
M^2\cdot L_1\cdots L_{n-2}\leq 0.
$$
The equality holds if and only if $M$ is numerically trivial. 
\end{thm}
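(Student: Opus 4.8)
The plan is to reduce the statement, by slicing $X$ with general hyperplane sections, to the classical Hodge index theorem for projective surfaces, and then to transport the conclusion back up to $X$ by a Lefschetz restriction theorem for N\'eron--Severi groups. The first step is a harmless normalization: both the inequality $M^2\cdot L_1\cdots L_{n-2}\le 0$ and the equality characterization are unchanged if each $L_i$ is replaced by a positive multiple, since every intersection number in sight is scaled by the same positive integer and the hypothesis $M\cdot L_1\cdots L_{n-1}=0$ is preserved; so I may assume $L_1,\dots,L_{n-2}$ very ample. If $n=2$ there is nothing to cut and the assertion is exactly the Hodge index theorem on the normal projective surface $X$ applied to $M$ and the ample class $L_1$, so I assume $n\ge 3$.

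Choose general members $H_i\in|L_i|$ for $i=1,\dots,n-2$ and set $Y:=H_1\cap\cdots\cap H_{n-2}$. By Bertini irreducibility over the algebraically closed (hence infinite) field $k$, together with the fact that a general very ample section of a normal variety of dimension $\ge 2$ is again normal in every characteristic, $Y$ is an irreducible normal projective surface; note that at each of the $n-2$ slicing steps the variety being cut has dimension $\ge 3$. Restriction of line bundles is compatible with intersection numbers, so $M^2\cdot L_1\cdots L_{n-2}=(M|_Y)^2$ and $M\cdot L_1\cdots L_{n-1}=(M|_Y)\cdot(L_{n-1}|_Y)$, with $L_{n-1}|_Y$ ample on $Y$. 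Applying the classical Hodge index theorem on the normal projective surface $Y$ --- valid in all characteristics, for instance by pulling back along a resolution of singularities of $Y$ --- to $M|_Y$ against the ample class $L_{n-1}|_Y$ yields $(M|_Y)^2\le 0$, that is $M^2\cdot L_1\cdots L_{n-2}\le 0$; and in the case of equality the same surface theorem forces $M|_Y$ to be numerically trivial on $Y$ (the equality clause, obtained in the standard way by pairing a putative class $D'$ with $D'\cdot M|_Y\ne 0$ against the negative semidefinite intersection form on $(L_{n-1}|_Y)^\perp$ and producing a positive value).

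It remains to descend the vanishing: one must check that $M|_Y\equiv 0$ on $Y$ implies $M\equiv 0$ on $X$, equivalently that the restriction $\NS(X)_\QQ\to\NS(Y)_\QQ$ is injective. This is precisely where a Lefschetz-type input is indispensable --- for $Y$ a general complete intersection of very ample divisors in a normal variety of dimension $\ge 3$, restriction on N\'eron--Severi groups is injective; classically this rests, at each slicing step, on the Lefschetz theorem for Picard groups (Grothendieck--Lefschetz) together with control of the kernel of $\Pic^0$ under restriction, and in the possibly singular normal setting it is one of the companion Lefschetz-type results of this appendix (compare Theorem \ref{lefschetz1}). Granting it, $M\equiv 0$ on $X$. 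The converse is immediate: a numerically trivial line bundle meets every cycle of complementary dimension in $0$, so $M\equiv 0$ makes all the intersection numbers in the statement vanish.

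The main obstacle is this descent step, not the surface Hodge index theorem, which is entirely classical: what genuinely needs proof is the Lefschetz restriction property for N\'eron--Severi groups of a possibly singular normal variety in arbitrary characteristic, and, subsidiarily, the fact that a general complete intersection section of a normal variety can be kept simultaneously irreducible and normal in positive characteristic. Both are among the preliminary Lefschetz-type facts collected in this appendix, on which the present theorem is built.
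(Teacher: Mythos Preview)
Your reduction to a surface via general hyperplane sections and the application of the surface Hodge index theorem match the paper's argument for the inequality.

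The problem is in the equality case. You invoke injectivity of $\NS(X)_\QQ\to\NS(Y)_\QQ$ for a general complete intersection surface $Y$, pointing to Theorem~\ref{lefschetz1}. But in this appendix the logical dependence runs the other way: Theorem~\ref{lefschetz1}(2) is \emph{derived from} Theorem~\ref{lefschetz2} (if $M$ restricts to zero in $\NS(Y)$ for $Y\in|L|$, then $M\cdot L^{n-1}=0$ and $M^2\cdot L^{n-2}=0$, and the present theorem forces $M\equiv 0$). So your argument is circular as organized here. Your fallback suggestion of an independent proof via Grothendieck--Lefschetz does not cover arbitrary normal $X$ in arbitrary characteristic either: the SGA2 results need regularity or local complete intersection hypotheses that are not assumed here.

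The paper avoids any Lefschetz input for $\NS$ by arguing directly, curve by curve. Given an integral curve $C\subset X$, one chooses (after passing to very ample multiples) sections of $L_1,\dots,L_{n-2}$ vanishing along $C$, so that the class $L_1\cdots L_{n-2}$ is represented by an effective $2$-cycle $\sum_i a_i S_i$ with $a_i>0$ and some component $S_0\supset C$. The equality $M^2\cdot L_1\cdots L_{n-2}=0$ reads $\sum_i a_i\,(M|_{S_i})^2=0$; since each summand is non-positive, they all vanish, and the surface Hodge index on $S_0$ gives $M|_{S_0}\equiv 0$, whence $M\cdot C=0$. As $C$ was arbitrary, $M$ is numerically trivial. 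This direct step is exactly what makes Theorem~\ref{lefschetz2} logically prior to Theorem~\ref{lefschetz1}, rather than the reverse.
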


\begin{proof} 

If $X$ is a smooth projective surface, the result is the classical Hodge index theorem. See \cite[Theorem IV.1.9 ]{Ha} for example. If $X$ is a singular projective surface, the result is induced by a desingularization $X'\to X$. Note that the pull-back $L_1'$ of $L_1$ to $X'$ is not ample, but $L_1'^2=L_1^2>0$ is sufficient for the result. 

In general, by Bertini's theorem, we can assume that $L_1\cdot L_2\cdots L_{n-2}$ is represented by an integral closed surface $S$ in $X$. 
Then the inequality is proved by the Hodge index theorem on $S$. 

For the condition of the equality, we need to prove that $M\cdot C=0$ for any complete curve $C$ in $X$. 
By Bertini's theorem, we can assume that $L_1\cdot L_2\cdots L_{n-2}$ is represented by an effective $2$-cycle $\sum_{i=0}^r a_i S_i$ with $a_i>0$ such that $S_0$ contains $C$. 
Then 
$$\sum_{i=0}^r a_i S_i\cdot M^2=0$$ 
implies that $S_i\cdot M^2=0$
since each term is non-positive. 
It follows that $M|_{S_0}$ is numerically trivial on $S_0$. Then $M\cdot C=0$. 
\end{proof}

By a very ample linear system, we mean a subspace $V$ of $H^0(X,L)$, for a very ample line bundle $L$ on $X$, which gives an embedding 
$X\hookrightarrow \BP(V)$. Denote by $|V|=\{\div(s):s\in V\}$ the space of hyperplane sections. By a general hyperplane section of $V$ in $X$, we mean an element in a Zariski open subset of $|V|$. By the Bertini-type result of Seidenberg \cite{Sei}, if $X$ is normal and projective, then a general element $Y\in |V|$ is also normal and projective. 
The following is the Lefschetz hyperplane theorem in the current setting. 

\begin{thm} \label{lefschetz1}
Let $X$ be a normal projective variety of dimension $n$ over $k$. 
Let $Y$ be a general hyperplane section of a very ample linear system $V$ in $X$. 
\begin{enumerate}[(1)]
\item The natural map $\Pic^0(X)\to \Pic^0(Y)$ has a finite kernel if $n\geq 2$.
\item The natural map $\NS(X)\to \NS(Y)$ has a finite kernel if $n\geq3$. 
\item The natural map $\Pic(X)\to \Pic(Y)$ has a finite kernel if $n\geq3$. \end{enumerate}
\end{thm}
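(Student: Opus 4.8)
The plan is to establish the three assertions in the order (2), (1), (3): part (2) will be an immediate consequence of the Hodge index inequality for normal varieties (Theorem \ref{lefschetz2}), part (1) will be reduced to a statement about Albanese varieties, and part (3) will follow formally from (1) and (2).

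For (2), note that $\dim Y=n-1\ge 2$, and since $\NS$ is finitely generated it suffices to show that any $M\in\Pic(X)$ with $M|_Y$ numerically trivial on $Y$ is already numerically trivial on $X$. Put $L=\CO_X(Y)$ and choose ample classes $H_1,\dots,H_{n-2}$ on $X$; then $L,H_1,\dots,H_{n-2}$ are $n-1$ ample classes and $M\cdot L\cdot H_1\cdots H_{n-2}=M|_Y\cdot H_1|_Y\cdots H_{n-2}|_Y=0$. Theorem \ref{lefschetz2} then gives $M^2\cdot L\cdot H_1\cdots H_{n-3}\le 0$, with equality if and only if $M$ is numerically trivial on $X$; but the left-hand side equals $(M|_Y)^2\cdot H_1|_Y\cdots H_{n-3}|_Y$, which vanishes because $M|_Y$, being numerically trivial on $Y$, meets the $1$-cycle $M|_Y\cdot H_1|_Y\cdots H_{n-3}|_Y$ trivially (here $n\ge 3$ is used). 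Hence equality holds and $M$ is numerically trivial. This step uses neither normality nor generality of $Y$.

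For (1), I would first reduce to the curve case. By Seidenberg's Bertini theorem, successive general members of the linear system cut $X$ down to a normal, hence smooth, complete-intersection curve $C$, which moreover avoids $\mathrm{Sing}(X)$ since $\dim\mathrm{Sing}(X)\le n-2$. As restriction to $C$ factors through restriction to $Y$, it is enough to prove that $\Pic^0(X)\to\Pic^0(C)=\Jac(C)$ has finite kernel. Now $\Pic^0(X)$ is the group of $k$-points of the Picard variety $P_X$, an abelian variety for $X$ normal projective (cf. \cite[Theorem 5.4]{Kl}), canonically dual to the Albanese variety $\mathrm{Alb}(X)$. Under this duality and the principal polarization of $\Jac(C)$, the restriction $P_X\to\Jac(C)$ is identified with the transpose of the Albanese map $\Jac(C)=\mathrm{Alb}(C)\to\mathrm{Alb}(X)$ induced by $C\hookrightarrow X$; the latter is surjective because the complete-intersection curves in this irreducible family sweep out a dense subset of $X$, so by semicontinuity the abelian subvariety of $\mathrm{Alb}(X)$ generated by the image of a general $C$ already contains the image of $X$, and therefore equals $\mathrm{Alb}(X)$. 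The transpose of a surjection of abelian varieties is a closed immersion, so $\Pic^0(X)\to\Jac(C)$ has finite kernel.

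For (3), assume $n\ge 3$ and apply the snake lemma to the restriction map between the short exact sequences $0\to\Pic^0\to\Pic\to\NS\to 0$ for $X$ and for $Y$: this yields an exact sequence $0\to\ker(\Pic^0 X\to\Pic^0 Y)\to\ker(\Pic X\to\Pic Y)\to\ker(\NS X\to\NS Y)$, exhibiting $\ker(\Pic X\to\Pic Y)$ as an extension of a subgroup of the finite group of part (2) by the finite group of part (1), hence finite (note $n\ge 3$ makes both (1) and (2) available).

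I expect the real obstacle to be part (1) in positive characteristic: the classical proof goes through $H^1(X,\CO_X)\hookrightarrow H^1(Y,\CO_Y)$ and Hodge theory (or Kodaira vanishing), which are unavailable, so one is forced through the Albanese, and the two points that need care are (a) that $\Pic^0_{\mathrm{red}}$ of a normal projective variety is an abelian variety dual to a genuine Albanese variety, and (b) the surjectivity of $\Jac(C)\to\mathrm{Alb}(X)$ for a general complete-intersection curve, which needs the short irreducibility/semicontinuity argument above since distinct general curves need not intersect when $n\ge 3$. Avoiding resolution of singularities (illegitimate in characteristic $p$ once $\dim X\ge 3$) is the recurring technical theme, but it is already subsumed in Seidenberg's theorem and the bound $\dim\mathrm{Sing}(X)\le n-2$.
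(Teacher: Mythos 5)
For parts (2) and (3) your argument matches the paper's in substance. For (2) the paper simply sets $L_1=\cdots=L_{n-1}=\CO(Y)$ in Theorem \ref{lefschetz2}; you take $\CO(Y)$ together with $n-2$ arbitrary ample classes and spell out that the value is actually zero, but the mechanism is identical (restrict to $Y$, use that $M|_Y$ is numerically trivial, conclude from the equality case of Theorem \ref{lefschetz2} that $M$ is numerically trivial, hence torsion in the finitely generated group $\NS(X)$). For (3) both proofs formally combine (1) and (2) via the exact sequence $0\to\Pic^0\to\Pic\to\NS\to 0$; invoking the snake lemma is just a way of saying the same thing.

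The genuine divergence is part (1). The paper does not prove it: it refers to \cite[Remark 5.8]{Kl} for a historical account and moves on. You, by contrast, sketch an actual argument through Picard--Albanese duality, which is the ``right'' classical route but is being asserted rather than proved at two places. First, the duality $\Pic^0_{X/k,\mathrm{red}}\cong\widehat{\mathrm{Alb}(X)}$ is usually stated for $X$ smooth projective; for $X$ merely normal you would need to justify it (e.g.\ by passing to a resolution or to $X_{\mathrm{sm}}$ and checking that $\Pic^0(X)\hookrightarrow\Pic^0(X')$ interacts correctly with the Albanese, which is a resolution invariant), and this is not immediate, especially in characteristic $p$ where the Picard scheme need not be reduced. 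Second, the surjectivity of $\Jac(C)=\mathrm{Alb}(C)\to\mathrm{Alb}(X)$ for a general complete-intersection curve is the whole content of the Lefschetz theorem for the Albanese; the ``semicontinuity'' sentence compresses a real argument (for instance, countability of abelian subvarieties of $\mathrm{Alb}(X)$ together with an uncountable-base-field reduction, or an appeal to the Lefschetz theorem for $\pi_1^{\mathrm{ab}}$ from SGA2). So you have not introduced an error, but you have been more ambitious than the paper on (1) and left the two load-bearing steps as unproved claims; the paper avoids this entirely by outsourcing (1) to the literature.
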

\begin{proof}
Part (3) is a consequence of (1) and (2). 
For (1), we refer to \cite[Remark 5.8]{Kl} for a historical account. 
Part (2) is a consequence of Theorem \ref{lefschetz2}. 
In fact, assume that $M$ lies in the kernel of $\NS(X)\to \NS(Y)$. 
In Theorem \ref{lefschetz2}, set $L_1=\cdots=L_{n-1}=\CO(Y)$. 
We see that $M$ is numerically trivial on $X$. 
Then some integer multiple of $M$ lies in $\Pic^0(X)$. 
Hence, the kernel of $\NS(X)\to \NS(Y)$ is a torsion subgroup. 
It must be finite since $\NS(X)$ is a finitely generated abelian group. 
\end{proof}

\begin{remark}
The theorems remain true if $X$ is projective and regular in codimension one, i.e., the singular locus $X_{\rm sing}$ has codimension at least $2$.
\end{remark}

\end{document}